\numberwithin{equation}{section}
\newtheorem{theorem}{Theorem}[section]
\newtheorem{lemma}[theorem]{Lemma}
\newtheorem{proposition}[theorem]{Proposition}
\newtheorem{corollary}[theorem]{Corollary}
\newtheorem{fact}[theorem]{Fact}
\theoremstyle{definition}
\newtheorem{definition}[theorem]{Definition}
\newtheorem{example}[theorem]{Example}
\newtheorem{remark}[theorem]{Remark}
\title[Diffraction of plane waves, spherical waves, and beyond]{Diffraction of plane waves, spherical waves, \\ and beyond}
\author{Emily R. Korfanty}
\address{Department of Mathematical and Statistical Sciences, University of Alberta, \newline \indent Edmonton, AB,  T6G 2G1, Canada \\
}
\email{ekorfant@ualberta.ca}
\urladdr{}
\author{Jan Maz\'{a}\v{c}}
\address{Fakult\"at f\"ur Mathematik, Universit\"at Bielefeld,
	\newline \indent Postfach 100131, 33501 Bielefeld, Germany}
\email{jmazac@math.uni-bielefeld.de}
\urladdr{}
\date{\today}
\begin{document}

\begin{abstract}
We review the diffraction theory for plane waves and establish its connection to the diffraction of Besicovitch almost periodic functions, extending the theory to an unbounded setting and providing explicit formulas.
Then, we give an alternative proof that the diffraction of a spherical wave in $\RR^d$ is a~single sphere, which was recently shown in \cite{BKM25}.
After developing a suitable framework for working with spherically symmetric measures, including a radial analogue of the usual Lebesgue decomposition,
we introduce the notion of radial almost periodicity.  In particular, we define a space of Besicovitch radially almost periodic functions and show that this space contains precisely the functions whose radial part is Besicovitch almost periodic. The paper concludes with a~diffraction analysis of these functions. 
\end{abstract}

\maketitle

\section{Introduction}
Diffraction analysis is a well-established tool in materials science, physics, and also in harmonic analysis. It is essential to the theory of aperiodic order, where it offers a better understanding of long-range order for structures such as tilings or Delone sets. Moreover, the diffraction measure provides essential dynamical information, most notably in the pure-point case, where the diffraction and dynamical spectra coincide \cite{BL2004}. 
A long-standing unsolved problem asks for the exact diffraction of a tiling with statistical radial symmetry, known as the pinwheel tiling; see \cite{Rad1994}. It has been shown that this diffraction pattern is radially symmetric \cite{MPS2006}, and numerical computations suggest that it consists of concentric circles \cite{BFG2007, BFG2007a, GD2011}, but the exact nature of this diffraction measure remains an open question.
Since this remains a very challenging problem, one can take partial steps by studying objects with radially symmetric diffraction patterns.

In the recent work \cite{BKM25}, we constructively answered the following question posed by Strungaru in the affirmative:

\vspace{0.7mm}

\noindent
\textit{Is there a planar structure (say, a bounded function, or a
      translation-bounded measure) whose diffraction pattern is uniformly
      distributed and concentrated on a single circle?}

\vspace{0.7mm}
    
In particular, we established that the spherical wave \( f(x) = \ee^{2\pi \ii r \|x\|} \) has this property in any dimension.  In this paper, we build on this result by investigating the diffraction of series of plane waves and spherical waves, respectively.   

We consider the standard infinite-volume limit approach to diffraction. First, recall that the natural autocorrelation $\eta$ for a complex-valued, locally integrable and bounded function $f$ in $d$ dimensions is defined by
\begin{equation}\label{eq:auto-def}
   \eta(x) \, = \lim_{R\to\infty} 
  \myfrac{f\big|^{}_R \ast \widetilde{f\big|^{}_R}(x)}{\vol (B^{}_R )} \, = \, \lim_{R\to\infty} \myfrac{\vG \bigl(\frac{d}{2} +
    1 \bigr)}{\pi^{\frac{d}{2}} \, R^d} \int^{}_{B^{}_R} f (y) \,
  \overline{f (x-y)}\ts  \dd y \ts , \end{equation}
provided the limit exists (if this is not the case, one can still proceed along a~subsequence; see \cite[Sec.~9.1]{BG2013} for details). We write $B^{}_R$ for the ball of radius $R$ centred at the origin, $f\big|^{}_{R}$ for the restriction of $f$ to $B^{}_{R}$, and for any function $g$, we have $\widetilde{g} (x) \defeq \overline{g(-x)}$. We write $\vG$ for the gamma function, with $\vG(x+1) = x \vG(x)$,
and with values $\vG\bigl(\tfrac{1}{2}\bigr)=\sqrt{\pi}$ and
$\vG(1)=\vG(2)=1$.   The second equality in Eq.~\ref{eq:auto-def} is due to \cite[Lem.~1.2]{Sch2000}, which effectively allows us to replace $\widetilde{f\big|_{R}}$ with $\widetilde{f}$; this is often more convenient for
computations. In certain cases, this lemma does not apply and one must proceed directly; see Proposition \ref{thm:Bap}.

By construction, $\eta$ is a positive definite function, and is thus Fourier transformable (in the sense of tempered
distributions). Its Fourier transform is a positive measure by the
Bochner--Schwartz theorem, which is known as the \emph{diffraction
  measure} of $f$; we refer to \cite[Ch.~9]{BG2013} and references
therein for background. 
For our present purposes, it
suffices to think in terms of tempered distributions, which avoids some
subtleties of the Fourier analysis of unbounded measures \cite{AG1974}.

In this paper, we expand on recent studies of Besicovitch almost periodic functions \cite{LSS2020,LSS2024,LSS2024b} by establishing the existence of the natural autocorrelation measure for series of plane waves and spherical waves (and their limits in the corresponding Besicovitch spaces), respectively. Furthermore, we provide explicit computations for the natural autocorrelation and diffraction measures of these waves. We do so without a boundedness assumption on the underlying function.  To our knowledge, this is the first class of functions for which the existence of the autocorrelation measure can be established in such generality, although several results concerning unbounded settings are known; see \cite{BSV2020,SV2016}. Furthermore, we apply an abstract result recently developed in \cite{LS2024} to derive a criterion (Corollary~\ref{cor:orthogonal}) for the presence of circles in diffraction, which is based on the structure itself (and not on the autocorrelation).

While the plane wave case is somewhat trivial, the corresponding theory gives us a tool for studying the diffraction properties of Besicovitch almost periodic functions explicitly, and emphasises why the Besicovitch space is a particularly natural choice for studying diffraction. 
When increasing the dimension, plane waves lead very naturally
to the diffraction structure of lattices and model sets (or
mathematical quasicrystals), which are ultimately consequences of
Poisson's summation formula and its various generalisations, see \cite{LLRSS2020,RS2017}.  
Although we have a radial analogue of Poisson's summation formula \cite{BFG2007}, and a~natural connection to spherically symmetric measures has been established \cite{AKM2025}, much less is known about
the diffraction of spherically symmetric structures in general, motivating our study of spherical waves and their diffraction. The spherical wave case requires more subtle techniques, and to the authors' best knowledge, it has yet to be rigorously derived; we aim to fill this gap.

In our analysis of spherical waves, we introduce the space of \emph{Besicovitch radially almost periodic functions}, denoted by $\Brap^{\ts 2}_{}(\RR^d)$, which consists of functions given by series of the form
\[
    F(x) \, = \, \sum_{m=1}^\infty c^{}_m \ee^{2\pi \ii a^{}_m \|x\|} \,,
\]
where convergence is taken with respect to the Besicovitch $2$-seminorm (see Definition~\ref{def:Besicovitch-seminorm}).   
Our main result demonstrates the suitability of this space: the autocorrelation $\eta_F$ of any such function always exists, and the diffraction consists of spheres of radius $|a^{}_m|$ and amplitudes $|c^{}_m|^2$, as one may expect; see Theorem~\ref{thm:Brap-diffraction}. 

Although we study spherical waves and their diffraction, our setting differs from that of  \cite{BHPI,BHPII,BHPIII}, which generalise the diffraction measure to arbitrary locally compact second countable groups via the spherical Fourier transform; in contrast, we are focused on extending the Euclidean theory. A more closely related work is \cite{Ami1997}, which examines the Fourier spectrum for radially periodic functions within a different mathematical framework.

The paper is structured as follows. In Section~\ref{sec:preliminaries_and_background}, we introduce spheres in $\RR^d$ as measures and recall their connection to the Bessel functions of the first kind via the Fourier transform. We also recall the reflected Eberlein convolution, its properties, and its role in the theory of diffraction.  Then, in Section~\ref{sec:besicovitch_almost_periodic_functions}, we introduce the Besicovitch almost periodic functions and use their properties to analyse the convergence of their autocorrelation measures.  In particular,  Corollary~\ref{cor:approx_Bap} describes why the Besicovitch space is the suitable one for studying autocorrelation, and Theorem~\ref{thm:Bap} extends the definition of the autocorrelation measure by proving its existence for \emph{all} Besicovitch almost periodic functions, without the usual restriction to $L^\infty(\RR^d)$.  
In Section~\ref{sec:plane}, we review plane waves and their diffraction, and prove an explicit formula for the diffraction of Besicovitch almost periodic functions (Theorem~\ref{thm:Besicovitch-diffraction}).  In Section~\ref{sec:spherical}, we consider the diffraction of spherical waves and their superpositions. We first introduce the notions of radially concentrated versus radially dispersed measures, giving a radial counterpart to the usual Lebesgue decomposition for measures. We discuss the aforementioned criterion for the existence of circles in diffraction (Corollary~\ref{cor:orthogonal}), and then introduce the space of Besicovitch radially almost periodic functions. Finally, we prove radial versions of the results obtained for plane waves (Proposition~\ref{prop:spherical_auto_ex}), and prove our main results on the diffraction of spherical waves (Theorem~\ref{thm:Brap-diffraction}).

\section{Preliminaries and background}\label{sec:preliminaries_and_background}
Throughout this paper, we use $\lambda$ to denote the Lebesgue measure on $\RR^d$.  We denote the volume of a~set $S \subseteq \RR^d$ by $\vol(S) \deq \lambda(S)$, and write $\boldsymbol{1}^{}_{S}$ for the characteristic function of $S$.  

We write $L^\infty(\RR^d)$ for the space of essentially bounded functions on $\RR^d$, and $\Cu(\RR^d)$ for the space of uniformly continuous functions.  We employ the  notation $\|f\|^{}_\infty \defeq \sup_{x\in\RR^d}|f(x)|$ in various contexts, where applicable.

As is standard in diffraction theory, we will always use the term \emph{measure} to refer to any complex Radon measure, i.e., any continuous linear functional on the space $\Cc(\RR^d)$ of continuous compactly supported functions, equipped with an appropriate inductive limit topology; see \cite[Sec.~8.5.1]{BG2013} for details.  

By the Fourier transform of an integrable function $h$ on $\RR^d$, we mean 
\[
    \hh (x) \, = \int_{\RR^d} \ee^{-2 \pi \ii x \cdot y} h(y) \ts \dd y \ts ,
\]
which naturally extends to finite, and then to translation-bounded measures.  A measure $\mu$ is said to be translation bounded if the condition
\[
    \sup_{t \in \RR^d} |\mu|(t+K) \, < \, \infty \ts ,
\]  
holds for all compact sets $K\subseteq \RR^d$, where $|\mu|$ denotes the variation of $\mu$.  

\subsection{Spheres and Bessel functions}
We will use the following notation for spheres and their counterparts as probability measures.

\begin{definition}
\label{def:spher_meas}
Let $\theta^{}_{r}$ with $r>0$ denote the probability measure for the
uniform distribution on $\Sph^{d-1}_r$, the sphere of radius $r>0$
in $\RR^d$ centred at $0$, where we set $\Sph_r^0 = \{ -r, r\}$.    
\end{definition}

The Fourier
transform of $\theta^{}_{r}$, compare \cite[Eq.~9]{BFG2007}, is the
continuous function
\begin{equation}\label{eq:sphere-Bessel}
    \widehat{\ts\ts\theta^{}_{r}} (k) \, =\,  \int_{\RR^d} \ee^{-2 \pi \ii k \cdot x}\ts
    \dd \theta^{}_{r} (x) \, = \, \vG\! \lt( \myfrac{d}{2}\rt)
    \frac{J^{}_{\frac{d}{2}-1}\bigl( 2 \pi r \| k \| \bigr)}
    {\bigl( \pi r \| k \| \bigr)^{\frac{d}{2} - 1}} \ts .
\end{equation}
Recall that $J_\nu$ is an entire function with series expansion
\begin{equation}\label{eq:Bessel-series}
   J_{\nu} (z) \, = \, \lt( \myfrac{z}{2}\rt)^{\nu} \sum_{m=0}^{\infty}
   \myfrac{(-1)^{m}}{m! \, \vG(\nu+m+1)} \lt(\myfrac{z}{2}\rt)^{2m} \ts .
\end{equation}
Additionally, $J_\nu$ admits the following integral representation 
\begin{equation}\label{eq:Bessel-integral}
J_\nu(z) \, = \, \myfrac{\lt(\frac{z}{2}\rt)^{\nu}}{\sqrt{\pi}\vG \! \lt(\nu+\frac{1}{2}\rt)} \int_0^\pi \ee^{\pm \ii z\cos\theta} \sin^{2\nu}\theta \,\dd\theta \ts ,
\end{equation}
whenever $\mathrm{Re}(\nu)>-\frac{1}{2}$.  This representation will be helpful for our computations in Section~\ref{sec:spherical}. For more details and various other representations of $J^{}_{\nu}$, see \cite[Ch.~9-10]{AS1972}.

\subsection{Reflected Eberlein convolution}
We now recall the general definition of the~reflected Eberlein convolution, which was recently introduced in \cite{LSS2024b}.

In what follows, the term \emph{van Hove sequence} refers to a sequence $\cA = (A^{}_n)^{}_{n\in \mathbb{N}}$ of precompact Borel subsets of $\RR^d$ such that, for any compact set $K\subseteq \RR^d$,  the limit 
\[
    \lim_{n\to \infty} \myfrac{\partial^K(A^{}_n)}{\vol(A^{}_n)} \, = \, 0 \ts ,
\]
where
\[
    \partial^K(A^{}_n) \, = \, \vol((K+A^{}_n)\backslash A_n^\circ) \cup ((-K + \overline{\RR^d \backslash A^{}_n})\cap A^{}_n)) \ts,
\]
denotes the $K$-boundary of $A^{}_n$, and $A_n^\circ$ denotes the interior of $A^{}_n$.  

Note that for all compact sets $K$ and all $u \in K$, both $(u+A^{}_n) \backslash A^{}_n$ and $A^{}_n \backslash (u+A^{}_n)$ are contained in $\partial^K(A^{}_n)$.
In particular, every van Hove sequence is also \emph{F{\o}lner}, i.e., 
\[
    \lim_{n\to \infty} \myfrac{A^{}_n \Delta (u + A^{}_n)}{\vol(A^{}_n)} \, = \, 0 \ts,
\]
holds for every $u \in \RR^d$. For more details, see \cite{PRS22}.

\begin{definition}[Reflected Eberlein convolution]\label{def:reflected}
    Let $\cA=(A^{}_n)^{}_{n\in\NN}$  be a van Hove sequence in $\RR^d$.
    \begin{enumerate}[label=(\alph*)]
     \item Let  $f,g\in L^\infty(\RR^d)$. The \emph{reflected Eberlein convolution} of $f$ and $g$ is the function~$\lb f, g \rb^{}_{\cA}$  defined by the limit
    \[
    \lb f, g \rb^{}_{\cA}(x) \, \defeq \, \lim_{n\to \infty} \myfrac{1}{\vol(A^{}_n)} \int^{}_{A^{}_n} f(s)\ts \overline{g(s-x)} \,\dd s  \, = \, \lim_{n\to \infty} \myfrac{\bigl(\res{A^{}_{n}}{f}\ast \,\widetilde{g}\bigr)(x)}{\vol(A^{}_n)} \ts ,
    \]
    provided that this limit exists for all $x\in \RR^d$. In this case, we say that $\lb f, g \rb^{}_{\cA}$ exists.  
    We write $\lb f,g \rb$ instead of $\lb f, g \rb^{}_{\cA}$ when the choice of the van Hove sequence is clear from context.
    \item We say that measures $\mu, \nu$ have a well-defined reflected Eberlein convolution along $\cA$ if
    \[\lb \mu, \nu \rb^{}_{\cA} \, = \, \lim_{n\to \infty} \myfrac{1}{\vol(A^{}_{n})} \res{A^{}_{n}}{\mu} \ast  \,\widetilde{\res{A^{}_{n}}{\nu}} \ts , \]
    exists in the vague topology. Here, $\res{A^{}_{n}}{\mu}$ represents the restriction of $\mu$ to $A^{}_{n}$, and for a measure $\mu$, we define $\widetilde{\mu}$ by $\widetilde{\mu}(g) \defeq \overline{\mu(\widetilde{g})}$. 
    We write $\lb \mu, \nu \rb$ instead of $\lb \mu, \nu \rb^{}_{\cA}$ when the choice of the van Hove sequence is clear from context.  
    \end{enumerate} 
\end{definition}

The definitions of the reflected Eberlein convolution for functions and for measures differ slightly. Whereas for functions, we effectively restrict only the function $f$, in the definition for measures, we restrict both measures simultaneously. This has the advantage that the resulting object is a positive definite measure by construction. Nevertheless, it turns out that this small difference does not play a role as long as we stay within the $L^{\infty}$ class for functions. The proof of the following is an easy consequence of the F{\o}lner property satisfied by all van Hove sequences, and can be found in \cite[Lem.~1.2]{Sch2000}.

\begin{proposition}\label{prop:bounded-conv}
Let $f,g \in L^\infty(\RR^d)$ and let $\cA = (A^{}_n)^{}_{n\in\NN}$ be a van Hove sequence.  Then the following equality holds for all $x \in \RR^d$:
\begin{equation}\label{eq:equality}
    \lim_{n\to \infty} \myfrac{1}{\vol(A^{}_n)} \int_{A^{}_n} f(s) \overline{g(s-x)} \, \dd s \, = \, \lim_{n\to \infty} \myfrac{1}{\vol(A^{}_n)} \int_{\RR^d} \res{A^{}_n}{f}(s) \overline{\res{A^{}_n}{g}(s-x)} \, \dd s \ts .
\end{equation}  
\end{proposition}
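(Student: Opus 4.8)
The plan is to write both integrals over all of $\RR^d$ using characteristic functions and to show that the two integrands differ only on a region whose normalised volume tends to zero. First I would rewrite the left-hand side of Eq.~\ref{eq:equality} as
\[
    \myfrac{1}{\vol(A^{}_n)} \int_{\RR^d} \boldsymbol{1}^{}_{A^{}_n}(s)\ts f(s)\ts \overline{g(s-x)} \,\dd s \ts ,
\]
and, using $\res{A^{}_n}{f}(s) = \boldsymbol{1}^{}_{A^{}_n}(s)\ts f(s)$ and $\res{A^{}_n}{g}(s-x) = \boldsymbol{1}^{}_{A^{}_n}(s-x)\ts g(s-x)$, rewrite the right-hand side as
\[
    \myfrac{1}{\vol(A^{}_n)} \int_{\RR^d} \boldsymbol{1}^{}_{A^{}_n}(s)\ts \boldsymbol{1}^{}_{A^{}_n}(s-x)\ts f(s)\ts \overline{g(s-x)} \,\dd s \ts .
\]
The only discrepancy between the two integrands is the extra factor $\boldsymbol{1}^{}_{A^{}_n}(s-x)$, so their difference is supported precisely on the set of points $s$ with $s\in A^{}_n$ but $s-x\notin A^{}_n$, that is, on $A^{}_n \setminus (x+A^{}_n)$.

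Next I would estimate the difference of the two normalised integrals. Since $f,g\in L^\infty(\RR^d)$, we have $|f(s)\ts\overline{g(s-x)}| \leq \|f\|^{}_\infty \|g\|^{}_\infty$ for almost every $s$, and hence the absolute value of the difference is bounded above by
\[
    \myfrac{\|f\|^{}_\infty \ts \|g\|^{}_\infty}{\vol(A^{}_n)}\ \vol\bigl(A^{}_n \setminus (x+A^{}_n)\bigr) \ts .
\]
Because $A^{}_n \setminus (x+A^{}_n) \subseteq A^{}_n \Delta (x+A^{}_n)$, the F{\o}lner property recorded in the discussion preceding Definition~\ref{def:reflected} gives $\vol\bigl(A^{}_n \setminus (x+A^{}_n)\bigr)/\vol(A^{}_n) \to 0$ as $n\to\infty$. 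Thus the difference of the two normalised integrals vanishes in the limit, which shows that one side of Eq.~\ref{eq:equality} converges if and only if the other does, and that the two limits coincide.

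I expect no serious obstacle here; the argument is entirely elementary once the right bookkeeping is in place. The only point demanding a little care is the correct identification of the discrepancy region as $A^{}_n \setminus (x+A^{}_n)$, rather than the full symmetric difference, together with the observation that the uniform bound supplied by $f,g\in L^\infty(\RR^d)$ is exactly what converts the F{\o}lner estimate into the desired convergence to zero.
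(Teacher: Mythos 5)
Your proof is correct and is precisely the "easy consequence of the F{\o}lner property" that the paper alludes to when it defers this statement to Schlottmann's Lemma 1.2: rewrite both sides with characteristic functions, note that the integrands differ only on $A^{}_n \setminus (x+A^{}_n)$, and use the uniform bound $\|f\|^{}_\infty\|g\|^{}_\infty$ together with $\vol\bigl(A^{}_n \setminus (x+A^{}_n)\bigr)/\vol(A^{}_n)\to 0$ to conclude that the two normalised integrals have the same limiting behaviour. No gaps; the bookkeeping and the identification of the discrepancy region are exactly right.
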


If the assumption $f,g\in L^\infty(\RR^d)$ is dropped, the equality in Eq.~\ref{eq:equality} could potentially fail.  
Therefore, to avoid confusion, we only use the notation $\lb f, g \rb_\cA$ when $f,g \in L^\infty(\RR^d)$. 

It is easy to see that Definition~\ref{def:reflected} (a) yields the natural autocorrelation $\eta(x)$ for $f$ given in Eq.~\eqref{eq:auto-def}, provided that it exists. In the following definition, we clarify our notation for the autocorrelation of functions and extend the definition to measures in a natural way.

\begin{definition}[Autocorrelation]\label{def:autocorrelation}
    Let $\cA=(A^{}_n)^{}_{n\in\NN}$  be a van Hove sequence in $\RR^d$.
    \begin{enumerate}[label=(\alph*)]
    \item Let $f \in L^\infty(\RR^d)$.  The \emph{autocorrelation of $f$ along $\cA$} is given by
    \[
        \eta^{}_{f}(x)\, \defeq \, \lb f , f \rb^{}_{\cA}(x) \,,
    \]
    provided that this limit exists for all $x\in \RR^d$. 
    \item Let $\mu$ be a measure.  We define the \emph{autocorrelation of $\mu$ along $\cA$} to be the measure
    \[
        \gamma^{}_\mu \, \defeq \,  \lb \mu, \mu \rb^{}_{\cA} \,,
    \]
    provided that the limit exists in the vague topology.  
    \end{enumerate} 
\end{definition}

\begin{remark}
Given a translation-bounded measure~$\mu$ and a~van Hove sequence $\cA$, the autocorrelation $\gamma^{}_\mu$ always exists along a subsequence of $\cA$, as demonstrated by Hof \cite{Hof1995}. Moreover, the reflected Eberlein convolution is closely related to the usual Eberlein convolution 
\[
    f \circledast^{}_{\cA} g (x) \, = \, \lim_{n\to \infty} \myfrac{1}{\vol(A^{}_n)} \int^{}_{A^{}_n} f(s)\ts g(x-s) \,\dd s \ts .
\]
With this definition, we have $\lb f, g \rb^{}_{
\cA} = f \circledast^{}_{\cA} \widetilde{g}$. In particular, $\lb f, g \rb^{}_{\cA}$ exists if and only if $f \circledast^{}_{\cA} \widetilde{g}$ exists \cite[Rem.~3.3]{LSS2024b}.  We prefer to work with the reflected Eberlein convolution due to several structural advantages from the perspective of diffraction; see \cite{LSS2024b} for details. 

All of the above concepts can also be defined in the abstract setting of locally compact Abelian groups, where one replaces the van Hove sequence with a more general van Hove net. In our Euclidean setting, using sequences would not bring any advantage. However, we will often use the collection of balls $(B^{}_{R})^{}_{R>0}$,  
which is not explicitly a sequence (as it is not indexed by a countable set of indices), but a~trivial net with ordering inherited from the reals. 
\exend
\end{remark}

The following proposition reconciles the definitions of autocorrelation for functions and measures in the case when the measure is translation bounded.

\begin{proposition}[{\cite[Prop.~1.4]{LSS2020}}]\label{prop:autocorrelation-reflected}
Let $\mu$ be a translation-bounded measure.
If $\mu$ is absolutely continuous with respect to $\lambda$ with a uniformly continuous density function $f$, then the autocorrelation $\gamma^{}_\mu$ is also absolutely continuous with density function $\eta^{}_{f}$, i.e., 
\[
    \gamma^{}_\mu = \eta^{}_{f}\lambda \,.
\] 
\end{proposition}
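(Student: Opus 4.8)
The plan is to reduce the measure-level statement to the function-level reflected Eberlein convolution of Definition~\ref{def:reflected}, using that $\mu=f\lambda$ forces every finite-volume approximant to be absolutely continuous. The one genuinely delicate point is to upgrade the hypotheses to the boundedness $f\in L^\infty(\RR^d)$ that is needed to invoke Proposition~\ref{prop:bounded-conv}.

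First I would establish that $f\in L^\infty(\RR^d)$, and this is where uniform continuity is indispensable. Translation boundedness of $\mu=f\lambda$ gives a uniform local estimate $\sup_{t}\int_{t+K}|f|\dd\lambda<\infty$ with $K$ a fixed unit cube, while uniform continuity of $f$ prevents $|f|$ from being large only on a set of small measure: if $|f(x^{}_0)|$ were large, then $f\in\Cu(\RR^d)$ would force $|f|$ to stay large on a ball of a fixed radius about $x^{}_0$, contradicting the uniform local $L^1$ bound. Hence $\|f\|^{}_\infty<\infty$. Continuity alone does not suffice here, as thin tall spikes show, so this step really uses the uniform-continuity hypothesis.

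Next, since $\mu=f\lambda$ we have $\res{A^{}_n}{\mu}=\res{A^{}_n}{f}\,\lambda$ and $\widetilde{\res{A^{}_n}{\mu}}=\widetilde{\res{A^{}_n}{f}}\,\lambda$, so the convolution in Definition~\ref{def:reflected}(b) is absolutely continuous with density $g^{}_n\defeq \res{A^{}_n}{f}\ast\widetilde{\res{A^{}_n}{f}}$; that is, $\tfrac{1}{\vol(A^{}_n)}\,\res{A^{}_n}{\mu}\ast\widetilde{\res{A^{}_n}{\mu}}=\tfrac{g^{}_n}{\vol(A^{}_n)}\,\lambda$. By Proposition~\ref{prop:bounded-conv}, which applies precisely because $f\in L^\infty(\RR^d)$, the restrict-both expression $g^{}_n/\vol(A^{}_n)$ agrees in the limit with the restrict-one expression that defines $\eta^{}_f$ through Definition~\ref{def:autocorrelation}(a); thus $\tfrac{g^{}_n(x)}{\vol(A^{}_n)}\to\eta^{}_f(x)$ pointwise for every $x\in\RR^d$. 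Moreover, bounding the integrand by $\|f\|^2_\infty\,\boldsymbol{1}^{}_{A^{}_n}(s)\,\boldsymbol{1}^{}_{A^{}_n}(s-x)$ gives the uniform estimate $\bigl|g^{}_n(x)\bigr|\le\|f\|^2_\infty\,\vol\bigl(A^{}_n\cap(x+A^{}_n)\bigr)\le\|f\|^2_\infty\,\vol(A^{}_n)$, so all densities are bounded by $\|f\|^2_\infty$.

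Finally, I would pass to the vague limit. For any $\varphi\in\Cc(\RR^d)$ the function $\|f\|^2_\infty\,|\varphi|$ dominates $\varphi\,g^{}_n/\vol(A^{}_n)$ and is integrable on the compact support of $\varphi$, so dominated convergence yields $\int\varphi\,\tfrac{g^{}_n}{\vol(A^{}_n)}\dd\lambda\to\int\varphi\,\eta^{}_f\dd\lambda$. Hence the vague limit defining $\gamma^{}_\mu$ exists and equals $\eta^{}_f\lambda$, and since $\eta^{}_f$ is bounded it is locally integrable, so $\eta^{}_f\lambda$ is a genuine absolutely continuous measure; the same computation in fact shows that the vague limit defining $\gamma^{}_\mu$ exists exactly when the pointwise limit defining $\eta^{}_f$ does. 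The main obstacle is the first step, namely extracting $f\in L^\infty(\RR^d)$ from translation boundedness, which is exactly what the uniform-continuity hypothesis secures; once that is in hand, Proposition~\ref{prop:bounded-conv} reconciles the two restriction conventions and the remainder is dominated convergence.
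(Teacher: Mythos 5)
The paper does not prove this proposition at all --- it is imported verbatim from \cite[Prop.~1.4]{LSS2020} --- so there is no internal argument to compare yours against; your proposal has to be judged on its own merits, and on those merits it is correct. The step that genuinely needs care, upgrading the hypotheses to $f \in L^\infty(\RR^d)$ so that Proposition~\ref{prop:bounded-conv} becomes applicable, is handled properly: with $C \defeq \sup_{t\in\RR^d} |\mu|(t+B^{}_{1})$ finite by translation boundedness, uniform continuity gives a $\delta>0$ with $|f(x)-f(y)|\leqslant 1$ for $\|x-y\|\leqslant\delta$, whence $\|f\|^{}_\infty \leqslant 1 + C/\vol(B^{}_{\delta})$; you are also right that plain continuity would not suffice. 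The remaining steps are sound: the measure-level convolution is absolutely continuous with density $g^{}_{n} = \res{A^{}_{n}}{f} \ast \widetilde{\res{A^{}_{n}}{f}}$ (both factors lie in $L^1$ with compact support), Proposition~\ref{prop:bounded-conv} converts the restrict-both limit into the restrict-one limit defining $\eta^{}_{f}$, and the uniform bound $|g^{}_{n}|/\vol(A^{}_{n}) \leqslant \|f\|^2_\infty$ makes the dominated-convergence passage to the vague limit legitimate, since $\eta^{}_{f}$ is then bounded and measurable, so $\eta^{}_{f}\lambda$ is a genuine Radon measure.

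One caveat: your closing claim that ``the same computation in fact shows that the vague limit defining $\gamma^{}_\mu$ exists exactly when the pointwise limit defining $\eta^{}_{f}$ does'' is an overreach. Your computation gives only one direction (pointwise existence of $\eta^{}_{f}$ implies vague existence of $\gamma^{}_{\mu}$). The converse does hold here, but it requires an extra idea: the densities $g^{}_{n}/\vol(A^{}_{n})$ are uniformly bounded and uniformly equicontinuous (because $f$ is bounded and uniformly continuous), so Arzel\`a--Ascoli together with uniqueness of the vague limit upgrades vague convergence to local uniform, hence pointwise, convergence. Since the proposition as stated presupposes $\eta^{}_{f}$ (it is named as the density), this does not invalidate your proof, but that sentence should either be deleted or supported by the equicontinuity argument.
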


\begin{remark}\label{rem:non-TB-challenges}
If the assumption that $\mu = \lambda f$ is translation bounded is removed from Proposition~\ref{prop:autocorrelation-reflected}, the following challenges could potentially arise:
$\gamma^{}_{\mu}$ may not exist, $\eta^{}_{f}$ may not exist, or $\gamma^{}_{\mu}$ may exist but differ from $\eta^{}_{f}\lambda$.
Another subtle observation is the following. If $f \in L^\infty(\RR^d)$, then it is easy to show that $\mu = f\lambda$ is translation bounded; however, the converse does not hold. Given $f \in L^1(\RR^d)$, the estimate 
\[
|\mu|(x+K) \, \leqslant \|f\|^{}_1\ts ,
\] 
shows that $\mu = f\lambda$ is always a translation-bounded measure, even when $f$ is unbounded. \exend
\end{remark}

In the proofs to come, we will often use the following (sesqui-)linearity property of the reflected Eberlein convolution.

\begin{proposition}[{\cite[Prop.~3.11]{LSS2024b}}]\label{prop:linear}
    The reflected Eberlein convolution is linear in the first argument and sesquilinear in the second argument, i.e., for any $a,b\in\CC$ and $f,g,h\in L^{\infty}(\RR^d)$, we have
    \[
    \lb af+bh,g \rb^{}_{\cA} \, =\, a\ts\ts \lb f, g \rb^{}_{\cA}  + b\ts \ts \lb h, g \rb^{}_{\cA} \ts ,
    \]
    and
    \[
    \lb f, ag+bh\rb^{}_{\cA}  \, = \, \overline{a}\ts\ts \lb f, g \rb^{}_{\cA}  + \overline{b}\ts\ts \lb f, h \rb^{}_{\cA} \ts ,
    \]
    provided that these reflected Eberlein convolutions exist.
\end{proposition}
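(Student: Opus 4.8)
The plan is to unwind Definition~\ref{def:reflected}(a) and then appeal to the linearity of the integral together with the elementary fact that the limit of a linear combination of convergent sequences equals the corresponding linear combination of the limits. First I would observe that, since $L^{\infty}(\RR^d)$ is a vector space, the combinations $af+bh$ and $ag+bh$ again lie in $L^{\infty}(\RR^d)$, so that the bracket notation $\lb\,\cdot\,,\,\cdot\,\rb^{}_{\cA}$ on the left-hand sides is legitimate.

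For the first identity, I would fix $x\in\RR^d$ and write out
\[
\lb af+bh,\,g\rb^{}_{\cA}(x) \, = \, \lim_{n\to\infty}\myfrac{1}{\vol(A^{}_n)}\int^{}_{A^{}_n}\bigl(af(s)+bh(s)\bigr)\ts\overline{g(s-x)}\,\dd s\ts .
\]
Pulling the constants $a$ and $b$ out of the integral splits the integrand into two pieces, yielding, before the limit is taken, an expression of the form $a\,I^{}_n + b\,J^{}_n$, where $I^{}_n$ and $J^{}_n$ are exactly the normalised integrals defining $\lb f,g\rb^{}_{\cA}(x)$ and $\lb h,g\rb^{}_{\cA}(x)$, respectively. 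By hypothesis, both of these sequences converge, so the limit distributes over the sum and I obtain $a\,\lb f,g\rb^{}_{\cA}(x)+b\,\lb h,g\rb^{}_{\cA}(x)$, as claimed.

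The second identity is handled in the same way, with the single additional point that the conjugation in the second slot turns the scalars into their complex conjugates: since $\overline{ag(s-x)+bh(s-x)}=\overline{a}\,\overline{g(s-x)}+\overline{b}\,\overline{h(s-x)}$, the identical splitting produces $\overline{a}\,\lb f,g\rb^{}_{\cA}(x)+\overline{b}\,\lb f,h\rb^{}_{\cA}(x)$. Carrying out both computations for every $x\in\RR^d$ completes the argument.

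There is no genuine obstacle here: the statement is a direct consequence of the linearity of integration and of limits. The only point requiring a modicum of care is the distribution of the limit over the sum, which is valid precisely because the two summand sequences are convergent by assumption, so that the limit of the sum equals the sum of the limits; conversely, this also shows that existence of the two reflected Eberlein convolutions on the right forces existence of the combined one on the left. Should one wish to phrase the result at the level of measures as in Definition~\ref{def:reflected}(b), the same argument applies verbatim, now with the vague topology in place of pointwise convergence and using that vague limits respect linear combinations.
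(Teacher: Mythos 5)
Your argument is correct: unwinding Definition~\ref{def:reflected}(a), using linearity of the integral (and conjugate-linearity under the complex conjugation in the second slot), and distributing the limit over a linear combination of convergent sequences is exactly what this statement requires, and you correctly note that existence of the two brackets on the right forces existence of the one on the left. Note that the paper itself offers no proof to compare against --- Proposition~\ref{prop:linear} is imported by citation from \cite[Prop.~3.11]{LSS2024b} --- so your direct verification is the standard (essentially the only) argument, and your closing remark that the same reasoning carries over to the measure-valued bracket of Definition~\ref{def:reflected}(b) via linearity of restriction, bilinearity of convolution, and linearity of vague limits is also sound.
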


We conclude this section with a theorem that provides a way to study certain diffraction properties of a~measure by working directly with the underlying structure instead of the autocorrelation measure. This is especially relevant for the many applications where the diffraction remains unknown due to challenges in computing the autocorrelation.

\begin{theorem}[{\cite[Thm.~6.1]{LS2024}}]\label{thm:orthogonal}
Let $\mu$, $\nu$ be translation-bounded measures and let $\cA = (A^{}_n)^{}_{n\in\NN}$ be a van Hove sequence.  If the autocorrelations $\gamma^{}_\mu$ and $\gamma^{}_\nu$ exist along $\cA$ and $\widehat{\gamma^{}_\mu} \perp \widehat{\gamma^{}_\nu}$, then $\lb\mu, \nu\rb^{}_{\cA}$ exists and $\lb\mu,\nu\rb^{}_{\cA} = 0$. 
\end{theorem}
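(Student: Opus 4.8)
The plan is to transfer the problem to the Fourier side, where the correlation structure degenerates into a pointwise product of functions, so that the hypothesis $\widehat{\gamma^{}_\mu}\perp\widehat{\gamma^{}_\nu}$ can be exploited through a Cauchy--Schwarz estimate. Write $\Gamma^{}_n(\alpha,\beta)\defeq \myfrac{1}{\vol(A^{}_n)}\res{A^{}_n}{\alpha}\ast\widetilde{\res{A^{}_n}{\beta}}$ for the $n$-th truncated correlation, so that $\Gamma^{}_n(\mu,\mu)\to\gamma^{}_\mu$, $\Gamma^{}_n(\nu,\nu)\to\gamma^{}_\nu$ vaguely by hypothesis. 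Since $\mu,\nu$ are translation bounded and each $A^{}_n$ is precompact, the restrictions $\res{A^{}_n}{\mu},\res{A^{}_n}{\nu}$ are finite measures, hence their Fourier transforms are bounded continuous functions and
\[
\widehat{\Gamma^{}_n(\mu,\mu)} \, = \, \myfrac{\bigl|\widehat{\res{A^{}_n}{\mu}}\bigr|^2}{\vol(A^{}_n)}\,\lambda \, =: \, p^{}_n\lambda \ts , \qquad \widehat{\Gamma^{}_n(\nu,\nu)} \, =: \, q^{}_n\lambda \ts , \qquad \widehat{\Gamma^{}_n(\mu,\nu)} \, = \, \myfrac{\widehat{\res{A^{}_n}{\mu}}\,\overline{\widehat{\res{A^{}_n}{\nu}}}}{\vol(A^{}_n)}\,\lambda \, =: \, r^{}_n\lambda \ts ,
\]
with $p^{}_n,q^{}_n\geqslant 0$ and, crucially, the pointwise identity $|r^{}_n|=\sqrt{p^{}_n q^{}_n}$. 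Conceptually, this is just the statement that the off-diagonal of the positive-definite $2\times 2$ correlation matrix measure is absolutely continuous with respect to both diagonal entries.

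First I would record that continuity of the Fourier transform on the relevant class of transformable measures upgrades the two vague limits to $p^{}_n\lambda\to\widehat{\gamma^{}_\mu}$ and $q^{}_n\lambda\to\widehat{\gamma^{}_\nu}$. The goal is then to show $r^{}_n\lambda\to 0$ vaguely. Fix $\varphi\in\Cc(\RR^d)$ with support $K$; by the pointwise identity and the triangle inequality, $\bigl|(r^{}_n\lambda)(\varphi)\bigr|\leqslant \int_K |\varphi|\sqrt{p^{}_n q^{}_n}\,\dd\lambda$. Because $\widehat{\gamma^{}_\mu}\perp\widehat{\gamma^{}_\nu}$, there is a Borel set $E$ carrying $\widehat{\gamma^{}_\mu}$ on which $\widehat{\gamma^{}_\nu}$ vanishes. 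For $\epsilon>0$, inner and outer regularity on the compact $K$ produce an open set $G$ and a cutoff $\psi\in\Cc(\RR^d)$ with $0\leqslant\psi\leqslant 1$, $\psi\equiv 1$ on a compact set carrying all but $\epsilon$ of the restriction of $\widehat{\gamma^{}_\mu}$ to $K$, and $\psi$ supported in $G$ where $\widehat{\gamma^{}_\nu}(G)<\epsilon$. Splitting $|\varphi|=|\varphi|\psi+|\varphi|(1-\psi)$ and applying Cauchy--Schwarz to each piece (the factors $|\varphi|\psi$ and $|\varphi|(1-\psi)$ lie in $\Cc(\RR^d)$, so vague convergence applies) yields
\[
\limsup_{n\to\infty} \bigl|(r^{}_n\lambda)(\varphi)\bigr| \, \leqslant \, \bigl(\widehat{\gamma^{}_\mu}(|\varphi|\psi)\,\widehat{\gamma^{}_\nu}(|\varphi|\psi)\bigr)^{1/2} + \bigl(\widehat{\gamma^{}_\mu}(|\varphi|(1-\psi))\,\widehat{\gamma^{}_\nu}(|\varphi|(1-\psi))\bigr)^{1/2} \ts ,
\]
where in the first summand the $\widehat{\gamma^{}_\nu}$-factor is $O(\epsilon)$ and in the second the $\widehat{\gamma^{}_\mu}$-factor is $O(\epsilon)$. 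Letting $\epsilon\to 0$ gives $\widehat{\Gamma^{}_n(\mu,\nu)}=r^{}_n\lambda\to 0$ vaguely.

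To conclude, I would use that translation-boundedness of $\mu,\nu$ makes $(\Gamma^{}_n(\mu,\nu))^{}_{n\in\NN}$ uniformly translation bounded, hence vaguely precompact by Hof's argument. Any vague limit point $\omega$ then satisfies $\widehat{\omega}=\lim_n\widehat{\Gamma^{}_n(\mu,\nu)}=0$ by continuity of the Fourier transform, and injectivity forces $\omega=0$. Since the family is precompact and every limit point equals $0$, the full sequence converges, i.e.\ $\lb\mu,\nu\rb^{}_{\cA}$ exists and equals $0$.

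The main obstacle is the mutual-singularity step: converting $\widehat{\gamma^{}_\mu}\perp\widehat{\gamma^{}_\nu}$ into the vanishing of the $\sqrt{p^{}_n q^{}_n}$-overlap, for which the regularity-based construction of $\psi$ must be carried out carefully so that one Cauchy--Schwarz factor is genuinely small in each piece. The secondary technical points are justifying that the Fourier transform interchanges with vague limits on these possibly unbounded transformable measures and that it is injective there, together with the precompactness needed to pass from convergence of the transforms back to convergence of $\Gamma^{}_n(\mu,\nu)$ itself.
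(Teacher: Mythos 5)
The paper does not prove this theorem; it imports it verbatim from \cite{LS2024}, so there is no in-paper argument to compare yours against, and I assess your proposal on its own merits. Your core mechanism is correct and is the natural route: on the Fourier side, the truncated correlations $\Gamma^{}_n(\alpha,\beta) \defeq \frac{1}{\vol(A^{}_n)}\res{A^{}_n}{\alpha}\ast\widetilde{\res{A^{}_n}{\beta}}$ have absolutely continuous transforms with densities $p^{}_n$, $q^{}_n$, $r^{}_n$ satisfying $|r^{}_n|=\sqrt{p^{}_n q^{}_n}$ pointwise, and your conversion of $\widehat{\gamma^{}_\mu}\perp\widehat{\gamma^{}_\nu}$ into $\limsup_n |(r^{}_n\lambda)(\varphi)| = 0$ --- via outer regularity of $\widehat{\gamma^{}_\nu}$, inner regularity of $\widehat{\gamma^{}_\mu}$, a Urysohn cutoff $\psi$, and Cauchy--Schwarz applied separately to $|\varphi|\psi$ and $|\varphi|(1-\psi)$ --- is carried out correctly: one factor in each product is $O(\epsilon)$ while the complementary factors stay bounded by $\|\varphi\|_\infty^2$ times the masses that $\widehat{\gamma^{}_\mu}$, $\widehat{\gamma^{}_\nu}$ assign to a neighbourhood of $\supp(\varphi)$.

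The genuine gap is the final step, and it is more than the ``secondary technical point'' you call it. Vague convergence does \emph{not} imply vague convergence of Fourier transforms for general transformable measures; the continuity results available in this setting require the measures to be \emph{positive definite} and uniformly translation bounded. That is exactly why your first upgrade step ($p^{}_n\lambda\to\widehat{\gamma^{}_\mu}$, $q^{}_n\lambda\to\widehat{\gamma^{}_\nu}$) is legitimate: $\Gamma^{}_n(\mu,\mu)$ and $\Gamma^{}_n(\nu,\nu)$ are positive definite by construction and uniformly translation bounded by Hof's estimate \cite{Hof1995}. But $\Gamma^{}_n(\mu,\nu)$ is \emph{not} positive definite, so for a vague limit point $\omega$ you may not write $\widehat{\omega}=\lim_n\widehat{\Gamma^{}_n(\mu,\nu)}$; indeed it is not even clear that $\omega$ is Fourier transformable, so the appeal to injectivity is vacuous as stated. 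The missing idea is polarization: sesquilinearity gives $\Gamma^{}_n(\mu,\nu) = \frac{1}{4}\sum_{k=0}^{3}\ii^k\,\Gamma^{}_n\bigl(\mu+\ii^k\nu,\,\mu+\ii^k\nu\bigr)$, a finite combination of positive definite, uniformly translation bounded measures (each $\mu+\ii^k\nu$ being translation bounded). Given any subsequence along which $\Gamma^{}_n(\mu,\nu)\to\omega$, pass to a further subsequence along which all four of these autocorrelations converge; their limits are positive definite, hence transformable, the continuity result applies termwise, and linearity of the transform shows that $\omega$ is transformable with $\widehat{\omega}=\lim_n r^{}_n\lambda = 0$, whence $\omega=0$ by injectivity on transformable measures. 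Since the family $(\Gamma^{}_n(\mu,\nu))^{}_{n\in\NN}$ is vaguely precompact and every limit point vanishes, $\lb\mu,\nu\rb^{}_{\cA}$ exists and equals $0$. With this insertion (or an equivalent direct argument testing against functions $\varphi\ast\widetilde{\psi}$ with $\varphi,\psi$ smooth and compactly supported, using uniform translation boundedness of $p^{}_n\lambda$ and $q^{}_n\lambda$ for tail control), your proof is complete.
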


\section{Besicovitch almost periodic functions}\label{sec:besicovitch_almost_periodic_functions}
The theory of almost periodic functions goes back to Harald Bohr \cite{Bohr}, who studied the (nowadays called) \emph{Bohr (strong) almost periodic} functions. Recall that a bounded and uniformly continuous $\CC$-valued function on the real line $\RR^d$ is Bohr almost periodic if, for all $\varepsilon>0$, the set 
$\{ t \in \RR^d \ : \ \|f - T^{}_{t}f \|^{}_{\infty} <\varepsilon \}$ of $\varepsilon$-almost periods is relatively dense in $\RR^d$. Here, we define $(T^{}_t f)(x) \defeq f(x-t)$. The Bohr almost periodic functions can also be understood as the closure of the set of trigonometric polynomials with respect to the supremum norm \cite[Thm.~4.3.5]{MS2017}.  The set of all Bohr almost periodic functions on $\RR^d$ is denoted $\SAP(\RR^d)$. Any $f \in \SAP(\RR^d)$ has pure point diffraction; this result goes back to Bohr \cite{Bohr}, and can also be found, for example, in \cite[Thm.~2.2.3]{Benedetto}. 

In this section, we recall a particular class of almost periodic functions with respect to the following seminorm introduced by Besicovitch \cite{Bes1926}.   
\begin{definition}
Let $1\leqslant  p < \infty$ and let~$\cA=(A^{}_n)^{}_{n\in\NN}$ be a van Hove sequence in $\RR^d$.
Given a 
    function $f\in \Lloc{p}(\ts \RR^d)$, we define 
    \[
    \bigl\|f\bigr\|^{}_{b,p,\cA} \, \defeq \, \limsup_{n\to \infty} \lt(\myfrac{1}{\vol(A^{}_n)} \int_{A^{}_n}|f(x)|^p \, \dd x \rt)^{\frac{1}{p}} \ts .
    \]
    We call this  object the \emph{Besicovitch $p$-seminorm}; indeed, it acts as a seminorm on the space
    \[
    BL^{p}_{\cA}(\RR^d) \, \defeq \, \bigl\{f\in \Lloc{p}(\RR^d) \,:\, \bigl\|f\bigr\|^{}_{b,p,\cA} < \infty\bigr\}\ts .
    \]
    When the choice of the van Hove sequence is clear from context, we will write~$\|\cdot\|_{b,p}$ instead of~$\|\cdot\|_{b,p,\cA}$.
\end{definition}
We can define an equivalence relation $\equiv$  on the space $BL^{p}_{\cA}(\RR^d)$ as 
\begin{equation}
\label{eq:equiv}
    f \equiv g \,  \Longleftrightarrow \, \|f - g\|^{}_{b,p,\cA} = 0 \, .
\end{equation}
Later, we will use this equivalence to construct Hilbert spaces from various subspaces of $BL^{p}_{\cA}(\RR^d)$. 

\begin{remark}
\label{rem:net_vs_sequence}
   We will often work with $\cA = (B^{}_{R})^{}_{R>0}$, which does not pose any difficulties, as  
    \[ 
    \limsup_{R} \lt(\myfrac{1}{\vol(B^{}_{R})} \int_{B^{}_{R}}|f(x)|^p \, \dd x \rt)^{\frac{1}{p}}  \, = \, \limsup_{n\to \infty} \lt(\myfrac{1}{\vol(B^{}_{n})} \int_{B^{}_{n}}|f(x)|^p \, \dd x \rt)^{\frac{1}{p}} \, 
    \]
    whenever one of the sides exists. \exend
\end{remark}

Now, if we consider all functions that can be approximated by trigonometric polynomials with respect to the Besicovitch seminorm, we obtain the space of Besicovitch $p$-almost periodic functions.

\begin{definition}\label{def:Besicovitch-seminorm}
Let $1\leqslant  p < \infty$ and let~$\cA=(A^{}_n)^{}_{n\in\NN}$ be a van Hove sequence in~$\RR^d$. A~locally integrable function $f\in \Lloc{p}(\RR^d)$ is called a \emph{Besicovitch $p$-almost periodic function} if, for each $\eps>0$, there exists a~trigonometric polynomial $P$ such that 
\[
\bigl\|f - P\bigr\|^{}_{b,p,\cA} \, < \, \eps \ts .
\]
The space of all Besicovitch $p$-almost periodic functions is denoted by $\Bap\ts^{p}_{\cA}(\RR^d)$.
\end{definition}

\begin{remark}
For the function spaces mentioned above, we have the following inclusions \cite{LSS2020}.
\[ 
\begin{array}{rccc}
    & \Bap\ts^{p}_{\cA}(\RR^d) & \subseteq & BL^{p}_{\cA}(\RR^d)\\
 &\cup \ \raisebox{0.5pt}{$\scriptstyle{\mathrm{dense}}$}  & & \cup  \\
\SAP(\RR^d)\  \overset{\mathrm{dense}}{\subset}  & \Bap\ts^{p}_{\cA}(\RR^d) \cap L^{\infty}(\RR^d) & \subseteq & L^{\infty}(\RR^d)
\end{array}\, , 
\]
where it follows from the definition that $L^\infty(\RR^d)\subseteq BL^{p}_{\cA}(\RR^d)$ for any $1 \leqslant  p < \infty$ and for any van Hove sequence $\cA$. 
    \exend
\end{remark} 

It was shown in \cite{LSS2024b} that any $f \in L^\infty(\RR^d) \cap \Bap^2_{\cA}(\RR^d)$ has pure point diffraction.

\begin{proposition}[{\cite[Thm.~3.30]{LSS2024b}}]\label{prop:reflected-exists}
Let $\cA$ be a van Hove sequence. The reflected Eberlein convolution $\lb f, f \rb^{}_{\cA}$ exists and is strongly almost periodic for any function $f \in L^\infty(\RR^d) \cap \Bap_{\cA}^2(\RR^d)$.  In particular, $f$~has pure point diffraction. 
\end{proposition}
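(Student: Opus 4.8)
The plan is to use that $\Bap^2_\cA(\RR^d)$ is, by Definition~\ref{def:Besicovitch-seminorm}, the closure of the trigonometric polynomials in the seminorm $\|\cdot\|^{}_{b,2,\cA}$, whereas $\SAP(\RR^d)$ is the closure of the same polynomials in $\|\cdot\|^{}_\infty$, and to transfer the approximation from the former seminorm to the latter by means of a Cauchy--Schwarz estimate for the reflected Eberlein convolution. Throughout, I would write $\Phi_n[g,h](x) \defeq \frac{1}{\vol(A_n)}\int_{A_n} g(s)\ts\overline{h(s-x)}\,\dd s$, so that $\lb g,h\rb_\cA(x) = \lim_{n} \Phi_n[g,h](x)$ whenever the limit exists.

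First I would treat trigonometric polynomials $P = \sum_j c_j \ee^{2\pi\ii k_j\cdot x}$ and $Q = \sum_\ell d_\ell \ee^{2\pi\ii m_\ell\cdot x}$ directly. Expanding $P(s)\overline{Q(s-x)}$ gives a finite sum of characters $\ee^{2\pi\ii (k_j - m_\ell)\cdot s}$ with coefficients $c_j \overline{d_\ell}\ts\ee^{2\pi\ii m_\ell\cdot x}$. The van Hove mean of a nontrivial character vanishes: translating $A_n$ multiplies $\Phi_n$ of a character by a unimodular factor while changing it only by a boundary contribution, so the F{\o}lner property forces the average to $0$ unless $k_j = m_\ell$. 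Hence $\lb P,Q\rb_\cA$ exists and is again a trigonometric polynomial; in particular, collecting equal frequencies, $\lb P,P\rb_\cA = \sum_j |c_j|^2 \ee^{2\pi\ii k_j\cdot x} \in \SAP(\RR^d)$.

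The key estimate I would then prove is $\|\lb g,h\rb_\cA\|^{}_\infty \leqslant \|g\|^{}_{b,2,\cA}\ts\|h\|^{}_{b,2,\cA}$ for $g,h \in L^\infty(\RR^d)$. Cauchy--Schwarz applied to $\Phi_n[g,h](x)$ bounds it by the product of $(\frac{1}{\vol(A_n)}\int_{A_n}|g|^2)^{1/2}$ and the shifted factor $(\frac{1}{\vol(A_n)}\int_{A_n}|h(s-x)|^2\,\dd s)^{1/2}$. The latter equals $(\frac{1}{\vol(A_n)}\int_{A_n - x}|h|^2)^{1/2}$, which differs from $(\frac{1}{\vol(A_n)}\int_{A_n}|h|^2)^{1/2}$ by an error controlled by $\|h\|_\infty^2\ts\vol\bigl(A_n \Delta (A_n - x)\bigr)/\vol(A_n) \to 0$, again by the F{\o}lner property. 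This yields the bound \emph{uniformly in $x$}, and this uniformity is exactly the point where boundedness is indispensable --- it is why the unbounded case must be handled separately in Theorem~\ref{thm:Bap}. Establishing this uniform estimate is the main obstacle; once it is in place, the rest is a routine completeness argument.

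Finally, choosing trigonometric polynomials $P_k$ with $\|f - P_k\|^{}_{b,2,\cA}\to 0$, the sesquilinearity of Proposition~\ref{prop:linear} together with the estimate above gives $\|\lb P_k,P_k\rb_\cA - \lb P_j,P_j\rb_\cA\|^{}_\infty \leqslant \|P_k - P_j\|^{}_{b,2}\bigl(\|P_k\|^{}_{b,2} + \|P_j\|^{}_{b,2}\bigr)$, so $(\lb P_k,P_k\rb_\cA)_k$ is Cauchy in $\|\cdot\|^{}_\infty$ and converges to some $g \in \SAP(\RR^d)$. To identify $\lb f,f\rb_\cA$ with $g$, I would apply the finite-$n$ Cauchy--Schwarz estimate to the decomposition $\Phi_n[f,f] - \Phi_n[P_k,P_k] = \Phi_n[f-P_k,\ts f] + \Phi_n[P_k,\ts f-P_k]$, obtaining $\limsup_n |\Phi_n[f,f](x) - \lb P_k,P_k\rb_\cA(x)| \leqslant \|f-P_k\|^{}_{b,2}\bigl(\|f\|^{}_{b,2}+\|P_k\|^{}_{b,2}\bigr)$ uniformly in $x$; letting $k\to\infty$ shows $\lim_n \Phi_n[f,f](x) = g(x)$ for every $x$. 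Thus $\lb f,f\rb_\cA = g$ exists and is strongly almost periodic; since the autocorrelation $\eta^{}_f = \lb f,f\rb_\cA$ is then strongly almost periodic, its Fourier transform --- the diffraction measure of $f$ --- is pure point, as recalled at the beginning of this section.
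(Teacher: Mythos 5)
Your proof is correct. Note first that the paper does not actually prove this proposition --- it is imported verbatim from \cite[Thm.~3.30]{LSS2024b}, as the bracketed citation indicates --- so the natural comparison is with the paper's own Theorem~\ref{thm:Bap}, which extends the statement to unbounded $f \in \Bap^{\, 2}(\RR^d)$. Your argument is essentially the bounded-case specialisation of that proof: your character-mean computation for trigonometric polynomials is the pointwise version of Lemma~\ref{lem:P^{}_N-uniform}(a); your Cauchy--Schwarz bound $\|\lb g, h \rb^{}_{\cA}\|^{}_\infty \leqslant \|g\|^{}_{b,2,\cA}\, \|h\|^{}_{b,2,\cA}$ is Proposition~\ref{prop:CS} (which the paper also quotes from \cite{LSS2024b}, whereas you prove it; your F{\o}lner argument for absorbing the shift $x$ in the second factor is exactly right and correctly isolates where $L^\infty$ enters); and your final three-epsilon identification of $\lb f, f \rb^{}_{\cA}$ with the uniform limit $g$ of the $\lb P_k, P_k \rb^{}_{\cA}$ parallels parts (a) and (c) of Theorem~\ref{thm:Bap}. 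What boundedness buys you --- and what your proof exploits correctly --- is that the whole argument can be run on the unrestricted averages $\Phi_n[f,f]$, yielding existence of the pointwise (indeed uniform) limit everywhere; in the unbounded setting of Theorem~\ref{thm:Bap}, the paper must instead work with the doubly restricted convolutions of Definition~\ref{def:reflected}(b), settles for locally uniform convergence, and phrases the conclusion at the level of measures.

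One small correction to your last sentence: the fact you need is that the Fourier transform of the strongly almost periodic function $\eta^{}_f$ (equivalently, of the measure $\eta^{}_f \lambda$) is a pure point measure; this is \cite[Cor.~4.10.13]{MS2017}, the reference the paper itself invokes in the proof of Theorem~\ref{thm:Besicovitch-diffraction}. The statement you point to at the beginning of Section~\ref{sec:besicovitch_almost_periodic_functions} --- that every $f \in \SAP(\RR^d)$ has pure point diffraction --- concerns the diffraction of an $\SAP$ function, i.e., the transform of \emph{its} autocorrelation, which is not literally the fact being used here, although the correct fact is equally standard.
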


We will extend this result to the space $\Bap^2_{\cA}(\RR^d)$ by showing that the diffraction of any $2$-Besicovitch almost periodic function is pure point, with no boundedness assumption; see Theorem~\ref{thm:Besicovitch-diffraction}. 

A key feature of the space $\Bap^2_{\cA}(\RR^d)$ is the Hilbert space structure described in the following theorem, which allows us to describe any Besicovitch almost periodic function as a trigonometric series.  

\begin{theorem}[{\cite[Thm.~3.19]{LSS2020}}]\label{thm:hilbert}
Let $\cA$ be a van Hove sequence and let $\equiv$ denote the equivalence relation~\eqref{eq:equiv} on $BL^2_{\cA}(\RR^d)$. Then, the space $\Bap^2_{\cA}(\RR^d) / \equiv$ is a Hilbert space with inner product defined by
\[
    \langle [f], [g] \rangle^{}_{\cA} \, = \, \lim_{n\to \infty} \myfrac{1}{\vol(A^{}_{n})} \int_{A^{}_{n}} f(t) \overline{g(t)} \,\dd t \,,
\]
for which $\{[\ts\ee^{2\pi \ii x \cdot a}\ts] \,:\, a \in \RR^d\}$ is an orthonormal basis. 
In particular, we have $f \in \Bap^2_{\cA}(\RR^d)$ if and only if there exist sequences $(c^{}_{m})^{}_{m \in \NN}$ in $\CC$ and $(a^{}_{m})_{m\in\NN}$ in~$\RR^d$ such that 
\[
    f(x) = \sum_{m=1}^\infty c^{}_m \ee^{2\pi \ii x \cdot a^{}_m} \,,
\]
for all $x \in \RR^d$, where the infinite sum converges with respect to $\|\cdot\|^{}_{b,2,\cA}$.
\end{theorem}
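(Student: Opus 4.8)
The plan is to put the Hilbert-space structure on the quotient in three moves: show that the averages defining $\langle\cdot,\cdot\rangle$ converge on $\Bap^2_{\cA}(\RR^d)$ and yield an inner product, prove that the quotient is complete, and finally read off both the orthonormal basis and the series representation. Writing $\chi^{}_{a}(x)=\ee^{2\pi\ii x\cdot a}$, I would first compute the mean of a single character. For $a\neq 0$, choose $t$ with $\chi^{}_{a}(t)\neq 1$; the substitution $y=x+t$ gives $\int_{t+A^{}_{n}}\chi^{}_{a}=\chi^{}_{a}(t)\int_{A^{}_{n}}\chi^{}_{a}$, while the F\o lner property yields $\vol\bigl(A^{}_{n}\Delta(t+A^{}_{n})\bigr)=o\bigl(\vol(A^{}_{n})\bigr)$, so the averages $I^{}_{n}=\tfrac{1}{\vol(A^{}_{n})}\int_{A^{}_{n}}\chi^{}_{a}$ satisfy $|\chi^{}_{a}(t)-1|\,|I^{}_{n}|\to 0$ and hence $I^{}_{n}\to 0$; for $a=0$ the average is constantly $1$. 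By linearity the limit defining $\langle P,Q\rangle$ exists for trigonometric polynomials and equals the sum of $c^{}_{j}\overline{d^{}_{k}}$ over coinciding frequencies, so in particular $\langle[\chi^{}_{a}],[\chi^{}_{b}]\rangle=\delta^{}_{a,b}$ and the exponentials are orthonormal.

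To pass from polynomials to all of $\Bap^2_{\cA}(\RR^d)$ I would argue by density. For $f,g\in\Bap^2_{\cA}(\RR^d)$ and trigonometric polynomials $P,Q$, splitting $f\bar g-P\bar Q=(f-P)\bar g+P(\bar g-\bar Q)$ and applying Cauchy--Schwarz to the normalised integrals gives
\[
\limsup_{n\to\infty}\Bigl|\tfrac{1}{\vol(A^{}_{n})}\!\int_{A^{}_{n}}\! f\bar g-\tfrac{1}{\vol(A^{}_{n})}\!\int_{A^{}_{n}}\! P\bar Q\Bigr|\,\leqslant\,\|f-P\|^{}_{b,2}\,\|g\|^{}_{b,2}+\|P\|^{}_{b,2}\,\|g-Q\|^{}_{b,2}.
\]
Since the $P\bar Q$-averages converge and the right-hand side can be made arbitrarily small, the $f\bar g$-averages have equal $\limsup$ and $\liminf$, so $\langle[f],[g]\rangle$ exists. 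The same estimate with $\|f-f'\|^{}_{b,2}=0$ shows independence of the representative, sesquilinearity is inherited from the integral, and $\langle[f],[f]\rangle=\|f\|^{2}_{b,2}$ vanishes exactly when $[f]=[0]$; hence $\langle\cdot,\cdot\rangle$ is a genuine inner product.

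The \emph{main obstacle} is completeness, which does not follow formally from the seminorm: a Cauchy sequence need not converge to any locally integrable function a priori, so the limit must be constructed. I would prove a Riesz--Fischer statement for $\|\cdot\|^{}_{b,2}$. Passing to a subsequence $(h^{}_{k})$ with $\|h^{}_{k+1}-h^{}_{k}\|^{}_{b,2}<2^{-k}$ and using that the seminorm is a $\limsup$, choose indices $N^{}_{1}<N^{}_{2}<\cdots$ so that $\tfrac{1}{\vol(A^{}_{n})}\int_{A^{}_{n}}|h^{}_{k+1}-h^{}_{k}|^{2}<2^{-2k}$ for all $n\geqslant N^{}_{k}$, and patch the representatives on an exhausting family of annuli to build a limit function $h\in\Lloc{2}(\RR^d)$ with $\|h-h^{}_{k}\|^{}_{b,2}\to 0$; this patching is the technical heart of the argument. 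Conceptually, the same conclusion can be obtained by identifying $\Bap^2_{\cA}(\RR^d)/\!\equiv$ isometrically with $L^2$ of the Bohr compactification of $\RR^d$, whose dual characters are exactly the classes $[\chi^{}_{a}]$; then completeness and the orthonormal-basis property are standard consequences of Pontryagin duality. Granting completeness, the exponentials are total because $\Bap^2_{\cA}(\RR^d)$ is by definition the $\|\cdot\|^{}_{b,2}$-closure of trigonometric polynomials, so together with orthonormality they form an orthonormal basis, with Bessel's inequality forcing only countably many nonzero coefficients for each $f$. Finally, the ``in particular'' follows: the forward direction is the abstract Hilbert-space fact that $f=\sum_{m}\langle[f],[\chi^{}_{a^{}_{m}}]\rangle\,\chi^{}_{a^{}_{m}}$ converges in $\|\cdot\|^{}_{b,2}$, while the converse is immediate since partial sums of such a series are trigonometric polynomials, placing their seminorm-limit in the closure $\Bap^2_{\cA}(\RR^d)$.
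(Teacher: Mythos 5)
The paper offers no proof of this statement at all: it is imported verbatim as \cite[Thm.~3.19]{LSS2020}, so your proposal can only be measured against that source. Your overall architecture --- mean of a single character via the F{\o}lner property, extension of the mean to all of $\Bap^2_{\cA}(\RR^d)$ by polynomial density plus Cauchy--Schwarz, a Riesz--Fischer construction for completeness, and totality of the characters straight from the definition of $\Bap^2_{\cA}(\RR^d)$ --- is the right one, and the first two steps, the identification $\langle [f],[f]\rangle = \|f\|_{b,2,\cA}^2$, and the final ``in particular'' equivalence are correct as written (modulo the cosmetic point that ``equal limsup and liminf'' should be a Cauchy-criterion argument, since the averages are complex numbers).

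The genuine gap sits exactly where you flag it: completeness. First, your patching is phrased for ``an exhausting family of annuli'', which tacitly assumes the sets $A_n$ are nested and exhaust $\RR^d$. That holds for the balls and cubes used elsewhere in this paper, but the theorem is stated for an \emph{arbitrary} van Hove sequence, whose members need not be nested, increasing, or exhaustive (they may wander off to infinity), so ``annuli'' do not exist. The construction can be repaired --- e.g.\ define the limit as $P_k$ on $D_k \defeq \bigl(\bigcup_{N_k \leqslant i < N_{k+1}} A_i\bigr)\setminus \bigcup_{j<k}D_j$ and zero elsewhere, then use that each $D_j$ is precompact, that $\vol(A_n)\to\infty$ for van Hove sequences in $\RR^d$, disjointness of the $D_j$, and a weighted Cauchy--Schwarz bound $|P_j-P_k|^2 \leqslant 2^{-k+1}\sum_{i=k}^{j-1} 2^{\ts i}\ts |P_{i+1}-P_i|^2$ to control the averages --- but none of this is in your sketch, and it is precisely the mathematical content of the theorem. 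Second, the proposed conceptual shortcut via the Bohr compactification is circular: the cheap part is an isometric embedding of $\Bap^2_{\cA}(\RR^d)/\equiv$ onto a \emph{dense} subspace of $L^2$ of the Bohr compactification; completeness of the quotient is \emph{equivalent} to surjectivity of that embedding, i.e.\ to producing, for each $L^2$ class on the compactification, an honest function in $\Lloc{2}(\RR^d)$ representing it. Pontryagin duality and Peter--Weyl do not produce such a function --- $\RR^d$ sits inside its Bohr compactification as a Haar-null set, so an $L^2$ class there cannot be ``restricted'' back to $\RR^d$ --- and thus this route quietly re-assumes the very Riesz--Fischer statement it was meant to replace.
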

\begin{remark}
    We emphasise that the elements of $\Bap^2_{\cA}(\RR^d) / \equiv$  are equivalence classes, which contain functions that can be quite different. In particular, every continuous function with compact support belongs to the equivalence class of the zero function. Therefore, the notion of convergence with respect to the Besicovitch seminorm is fairly weak and does not convey any information about pointwise or uniform convergence. \exend
\end{remark}

\begin{remark}
    The Hilbert space structure requires a van Hove sequence. For a general van Hove net, one can only establish the structure of a~pre-Hilbert space, see \cite[Sec.~3]{LSS2020}. On the other hand, the Hilbert space structure is still guaranteed if one works with sufficiently nice van Hove sequences, which is true for our setting due to Remark~\ref{rem:net_vs_sequence}. \exend
\end{remark}

\pagebreak
Another key property of $\Bap^2_{\cA}(\RR^d)$ is the following Cauchy--Schwarz type inequality for the reflected Eberlein convolution. 

\begin{proposition}[{\cite[Prop.~3.11]{LSS2024b}}]\label{prop:CS}
    Let $\cA$ be a van Hove sequence.  For any $f,g \in L^\infty(\RR^d)\cap BL_{\cA}^{2}(\RR^d)$, we have
    \begin{equation}
    \label{eq:CS}
    \bigl\|\lb f, g \rb^{}_{\cA} \bigr\|^{}_\infty \, \leqslant  \,  \bigl\|f\bigr\|^{}_{b,2,\cA}\ts \bigl\|g\bigr\|^{}_{b,2,\cA} \ts ,
    \end{equation}
    provided that $\lb f, g \rb^{}_{\cA}$ exists.
\end{proposition}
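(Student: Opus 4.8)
The plan is to reduce the statement to the classical Cauchy–Schwarz inequality applied to each finite-volume average, and then to pass to the limit carefully; the only genuine subtlety is that the translation by $x$ appearing in the second factor must be shown to leave the Besicovitch $2$-seminorm invariant.

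First I would fix $x \in \RR^d$ and work with the approximants defining the reflected Eberlein convolution. For each $n$, the ordinary Cauchy–Schwarz inequality on $L^2(A^{}_n)$ gives
\[
\left| \frac{1}{\vol(A^{}_n)}\int_{A^{}_n} f(s)\overline{g(s-x)}\,\dd s \right| \,\leqslant\, \left(\frac{1}{\vol(A^{}_n)}\int_{A^{}_n}|f(s)|^2\,\dd s\right)^{1/2}\left(\frac{1}{\vol(A^{}_n)}\int_{A^{}_n}|g(s-x)|^2\,\dd s\right)^{1/2}.
\]
Since $\lb f,g\rb^{}_{\cA}(x)$ exists by hypothesis, the left-hand side converges to $|\lb f,g\rb^{}_{\cA}(x)|$; taking $\limsup_{n}$ on the right and using that $\limsup_n (a^{}_n b^{}_n) \leqslant (\limsup_n a^{}_n)(\limsup_n b^{}_n)$ for nonnegative sequences (both factors being finite because $f,g \in BL^2_{\cA}(\RR^d)$) yields
\[
|\lb f,g\rb^{}_{\cA}(x)| \,\leqslant\, \|f\|^{}_{b,2,\cA}\cdot\limsup_{n\to\infty}\left(\frac{1}{\vol(A^{}_n)}\int_{A^{}_n}|g(s-x)|^2\,\dd s\right)^{1/2}.
\]

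The second step is to identify the remaining limsup with $\|g\|^{}_{b,2,\cA}$. The substitution $u = s-x$ rewrites the inner integral as $\int_{A^{}_n - x}|g(u)|^2\,\dd u$, so I need the translation invariance $\limsup_n \frac{1}{\vol(A^{}_n)}\int_{A^{}_n - x}|g|^2 = \|g\|^{2}_{b,2,\cA}$. This is exactly where the hypothesis $g \in L^\infty(\RR^d)$ enters: estimating the difference of the shifted and unshifted averages by the symmetric difference,
\[
\frac{1}{\vol(A^{}_n)}\left| \int_{A^{}_n - x}|g|^2\,\dd u - \int_{A^{}_n}|g|^2\,\dd u \right| \,\leqslant\, \|g\|^{2}_\infty\,\frac{\vol\bigl((A^{}_n - x)\,\Delta\, A^{}_n\bigr)}{\vol(A^{}_n)},
\]
and the right-hand side tends to $0$ because every van Hove sequence is F\o lner. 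Hence the two limsups coincide, giving $|\lb f,g\rb^{}_{\cA}(x)| \leqslant \|f\|^{}_{b,2,\cA}\,\|g\|^{}_{b,2,\cA}$. As this bound is uniform in $x$, taking the supremum over $x\in\RR^d$ yields the claimed inequality for $\|\cdot\|^{}_\infty$.

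The main obstacle is precisely the translation invariance of the Besicovitch seminorm in the second factor: for a general $g \in BL^2_{\cA}(\RR^d)$ there is no reason the average over the shifted window $A^{}_n - x$ should match that over $A^{}_n$, and it is the boundedness assumption together with the F\o lner property that forces the discrepancy to vanish. Everything else is bookkeeping with the elementary $\limsup$ inequality and the observation that the assumed existence of $\lb f,g\rb^{}_{\cA}(x)$ upgrades the left-hand side from a limsup to a genuine limit.
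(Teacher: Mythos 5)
Your proof is correct. Note, however, that the paper does not prove this proposition at all: it is imported verbatim from \cite[Prop.~3.11]{LSS2024b}, so there is no in-paper argument to compare against. Your argument is the natural, self-contained one: pointwise Cauchy--Schwarz on each averaging set $A^{}_n$, the hypothesis that $\lb f,g\rb^{}_{\cA}(x)$ exists to turn the left-hand side into a genuine limit, and then the identification of the shifted limsup with $\|g\|^{}_{b,2,\cA}$. You also correctly isolate the only nontrivial point, namely that the $L^\infty$ hypothesis is what makes the seminorm of $g$ insensitive to the shift by $x$: the symmetric-difference estimate combined with the F{\o}lner property (which the paper records as a consequence of the van Hove property) kills the discrepancy, and without boundedness this step would indeed fail. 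One cosmetic remark: the elementary inequality $\limsup_n(a^{}_n b^{}_n)\leqslant(\limsup_n a^{}_n)(\limsup_n b^{}_n)$ needs both factors to have finite limsup, and the finiteness of the shifted $g$-factor is only established in your second step; logically you should record the translation-invariance argument first (or note that finiteness follows from it), but this is a matter of ordering, not of substance.
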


We immediately obtain the following continuity result showing that the Besicovitch seminorm is a very natural object to work with when dealing with autocorrelation functions, as the autocorrelation remains unchanged when modified by a function of zero Besicovitch seminorm. 
\begin{corollary}
\label{cor:approx_Bap}
Let $\cA$ be a van Hove sequence. For any $f,g \in L^\infty(\RR^d) \cap \Bap_{\cA}^2(\RR^d)$, we have
\[
    \bigl\|f-g \bigr\|^{}_{b,2,\cA}\, = \, 0 \quad \Longrightarrow \quad  \eta^{}_{f} \,=\, \eta^{}_{g} \ts.
\]
\end{corollary}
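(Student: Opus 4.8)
The plan is to treat the hypothesis $\|f-g\|^{}_{b,2,\cA}=0$ as the statement that one factor in a Cauchy--Schwarz estimate vanishes. Since $L^\infty(\RR^d)$ and $\Bap^2_\cA(\RR^d)$ are both vector spaces, every function of the form $f\pm g$ and $f\pm\ii g$ again lies in $L^\infty(\RR^d)\cap\Bap^2_\cA(\RR^d)$; this closure under linear combinations is what lets us manipulate the cross terms freely. I would split the difference of autocorrelations as
\begin{equation*}
\eta^{}_f - \eta^{}_g \, = \, \lb f,f\rb^{}_\cA - \lb g,g\rb^{}_\cA \, = \, \lb f, f-g\rb^{}_\cA + \lb f-g, g\rb^{}_\cA \ts ,
\end{equation*}
using the linearity in the first argument and the conjugate-linearity in the second argument from Proposition~\ref{prop:linear}, and then bound each summand separately.

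Before invoking linearity and the Cauchy--Schwarz inequality, however, I must first know that the relevant cross convolutions exist, since both Proposition~\ref{prop:linear} and Proposition~\ref{prop:CS} are conditional on existence. Proposition~\ref{prop:reflected-exists} guarantees only that the diagonal convolution $\lb u,u\rb^{}_\cA$ exists for every $u\in L^\infty(\RR^d)\cap\Bap^2_\cA(\RR^d)$. To pass to cross terms $\lb u,v\rb^{}_\cA$, I would apply the polarization identity
\begin{equation*}
4\ts\lb u,v\rb^{}_\cA \, = \, \lb u+v,u+v\rb^{}_\cA - \lb u-v,u-v\rb^{}_\cA + \ii\ts\lb u+\ii v, u+\ii v\rb^{}_\cA - \ii\ts\lb u-\ii v, u-\ii v\rb^{}_\cA \ts ,
\end{equation*}
whose right-hand side consists of four diagonal convolutions of functions in $L^\infty(\RR^d)\cap\Bap^2_\cA(\RR^d)$, each existing by Proposition~\ref{prop:reflected-exists}. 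Establishing this existence, and checking the identity against the sesquilinearity convention of Proposition~\ref{prop:linear}, is the step I expect to require the most care; everything afterwards is formal, since once $\lb f, f-g\rb^{}_\cA$ and $\lb f-g, g\rb^{}_\cA$ are known to exist, the decomposition above is justified.

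With existence secured, I would finish by applying the Cauchy--Schwarz inequality of Proposition~\ref{prop:CS} to each summand:
\begin{equation*}
\bigl\|\lb f, f-g\rb^{}_\cA\bigr\|^{}_\infty \, \leqslant \, \|f\|^{}_{b,2,\cA}\,\|f-g\|^{}_{b,2,\cA} \, = \, 0 \ts , \qquad \bigl\|\lb f-g, g\rb^{}_\cA\bigr\|^{}_\infty \, \leqslant \, \|f-g\|^{}_{b,2,\cA}\,\|g\|^{}_{b,2,\cA} \, = \, 0 \ts ,
\end{equation*}
where finiteness of $\|f\|^{}_{b,2,\cA}$ and $\|g\|^{}_{b,2,\cA}$ follows from $f,g\in L^\infty(\RR^d)\subseteq BL^2_\cA(\RR^d)$. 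Both summands therefore vanish identically on $\RR^d$, and the displayed decomposition yields $\eta^{}_f-\eta^{}_g=0$, that is, $\eta^{}_f=\eta^{}_g$, as claimed.
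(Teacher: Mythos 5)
Your proof is correct and takes essentially the same route as the paper: the paper bounds $\bigl\|\lb f,f\rb^{}_{\cA} - \lb g,g\rb^{}_{\cA}\bigr\|^{}_\infty$ by inserting the cross term $\lb f,g\rb^{}_{\cA}$ and applying Proposition~\ref{prop:CS}, which is exactly your decomposition $\eta^{}_f - \eta^{}_g = \lb f, f-g\rb^{}_{\cA} + \lb f-g, g\rb^{}_{\cA}$ written as a triangle inequality. The one place you go beyond the paper is the polarization identity guaranteeing existence of the cross convolutions (the paper invokes Proposition~\ref{prop:reflected-exists} only for the diagonal terms and leaves the existence of $\lb f,g\rb^{}_{\cA}$ implicit); your identity is the correct one for a form that is linear in the first and conjugate-linear in the second argument, so this extra step is a legitimate refinement rather than a deviation.
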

\begin{proof}
By Proposition~\ref{prop:reflected-exists}, we have that $\lb f, f \rb^{}_{\cA}$ and $\lb g, g \rb^{}_{\cA}$ exist. Then, we have
\begin{align*}
 \bigl\|  \lb f, f \rb^{}_{\cA}  - \lb g, g \rb^{}_{\cA} \bigr\|^{}_\infty &\leqslant \, \bigl\|  \lb f, f \rb^{}_{\cA}  - \lb f, g \rb^{}_{\cA} \bigr\|^{}_\infty   + \bigl\|  \lb f, g \rb^{}_{\cA}  - \lb g, g \rb^{}_{\cA} \bigr\|^{}_\infty \\[2pt]
 & \leqslant \, \bigl(\| f \|^{}_{b,2,\cA} + \| g \|^{}_{b,2,\cA}) \ts \bigl\| f-g \bigr\|^{}_{b,2,\cA} =0 \,. \qedhere
\end{align*}
\end{proof}

\begin{corollary}\label{cor:Besicovitch-ac}
    Let $\cA$ be a van Hove sequence and let $f\in L^\infty(\RR^d)\cap \Bap_{\cA}^{2}(\RR^d)$.  If $(P^{}_N)^{}_{N\in\NN}$ is a~sequence of trigonometric polynomials with $\bigl\|f-P^{}_N\bigr\|^{}_{b,2} \longrightarrow 0$, then 
    \[
    \bigl\|\eta^{}_f - \eta^{}_{P^{}_N}\bigr\|^{}_\infty \, \longrightarrow \, 0 \ts .
    \]
\end{corollary}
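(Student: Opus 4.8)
The plan is to follow the proof of Corollary~\ref{cor:approx_Bap} almost verbatim, replacing the hypothesis of vanishing Besicovitch distance by a quantitative estimate that tends to zero. First I would check that all objects in the statement exist. Each $P^{}_N$ is a trigonometric polynomial, hence lies in $\SAP(\RR^d) \subseteq L^\infty(\RR^d) \cap \Bap^2_\cA(\RR^d)$, so Proposition~\ref{prop:reflected-exists} provides the autocorrelation $\eta^{}_{P^{}_N} = \lb P^{}_N, P^{}_N \rb^{}_\cA$; the same proposition gives $\eta^{}_f = \lb f, f \rb^{}_\cA$. Because $L^\infty(\RR^d) \cap \Bap^2_\cA(\RR^d)$ is a vector space, each diagonal convolution $\lb f + \ii^{k} P^{}_N, f + \ii^{k} P^{}_N \rb^{}_\cA$ exists as well, and polarising the sesquilinear pairing underlying Definition~\ref{def:reflected}(a) at the level of the pre-limit integrals shows that the mixed convolution $\lb f, P^{}_N \rb^{}_\cA$ also exists (and equals the corresponding combination of diagonal terms, by Proposition~\ref{prop:linear}). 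This is what makes the Cauchy--Schwarz inequality of Proposition~\ref{prop:CS} applicable to the mixed brackets below.

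Next I would record a uniform bound on the seminorms of the approximants: the triangle inequality gives $\|P^{}_N\|^{}_{b,2,\cA} \leqslant \|f\|^{}_{b,2,\cA} + \|f - P^{}_N\|^{}_{b,2,\cA}$, and since $\|f - P^{}_N\|^{}_{b,2,\cA} \to 0$ the right-hand side is bounded, say by a constant $M < \infty$.

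The core estimate is then exactly the one from Corollary~\ref{cor:approx_Bap}. Adding and subtracting $\lb f, P^{}_N \rb^{}_\cA$ and using the (sesqui-)linearity of Proposition~\ref{prop:linear},
\[
\eta^{}_f - \eta^{}_{P^{}_N} \, = \, \lb f, f - P^{}_N \rb^{}_\cA + \lb f - P^{}_N, P^{}_N \rb^{}_\cA \,,
\]
so that Proposition~\ref{prop:CS} yields
\[
\bigl\|\eta^{}_f - \eta^{}_{P^{}_N}\bigr\|^{}_\infty \, \leqslant \, \bigl(\|f\|^{}_{b,2,\cA} + \|P^{}_N\|^{}_{b,2,\cA}\bigr)\, \|f - P^{}_N\|^{}_{b,2,\cA} \, \leqslant \, \bigl(\|f\|^{}_{b,2,\cA} + M\bigr)\, \|f - P^{}_N\|^{}_{b,2,\cA} \,.
\]
Letting $N \to \infty$ and invoking $\|f - P^{}_N\|^{}_{b,2,\cA} \to 0$ then gives the claim.

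The one genuinely delicate point — and the step I would be most careful about — is the existence of the mixed reflected Eberlein convolutions $\lb f, P^{}_N \rb^{}_\cA$, since Proposition~\ref{prop:reflected-exists} is stated only for the diagonal $\lb f, f \rb^{}_\cA$. The polarisation argument in the first paragraph settles this, and it must precede any appeal to Proposition~\ref{prop:CS}. Everything else is the same triangle-inequality manipulation as in Corollary~\ref{cor:approx_Bap}; the only new ingredient is the uniform bound $M$, which keeps the prefactor from growing with $N$ and thereby upgrades the qualitative statement of Corollary~\ref{cor:approx_Bap} to the convergence $\|\eta^{}_f - \eta^{}_{P^{}_N}\|^{}_\infty \to 0$.
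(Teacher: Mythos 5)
Your proof is correct and follows essentially the same route as the paper's: the same sesquilinear decomposition $\eta^{}_f - \eta^{}_{P^{}_N} = \lb f, f-P^{}_N \rb^{}_{\cA} + \lb f-P^{}_N, P^{}_N \rb^{}_{\cA}$, the Cauchy--Schwarz bound of Proposition~\ref{prop:CS}, and the triangle-inequality control of $\|P^{}_N\|^{}_{b,2,\cA}$, which is exactly how the paper arrives at its estimate $\bigl\|\eta^{}_f - \eta^{}_{P^{}_N}\bigr\|^{}_\infty \leqslant \bigl\|f-P^{}_N\bigr\|^{}_{b,2}\bigl(\bigl\|f-P^{}_N\bigr\|^{}_{b,2} + 2\bigl\|f\bigr\|^{}_{b,2}\bigr)$. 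Your polarisation argument for the existence of the mixed convolution $\lb f, P^{}_N \rb^{}_{\cA}$ is, if anything, more careful than the paper, which covers that point by citing Proposition~\ref{prop:reflected-exists} even though that proposition is stated only for diagonal terms.
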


\begin{proof}
By Proposition~\ref{prop:reflected-exists}, we have that $\lb f, f \rb^{}_{\cA}$, $\lb f, P^{}_N \rb$, and $\lb P^{}_N, P^{}_N \rb $ exist.  Then, we have
\[
    \bigl\|\eta^{}_f - \eta^{}_{P^{}_n}\bigr\|^{}_\infty 
    \, \leqslant  \, \bigl\|f-P^{}_n\bigr\|^{}_{b,2} \ts\bigl(\bigl\|f-P^{}_n\bigr\|^{}_{b,2} + 2\bigl\|f\bigr\|^{}_{b,2}\bigr)\ts .
\]
Since $\bigl\|f\bigr\|^{}_{b,2} < \infty$ and $\bigl\|f - P^{}_n\bigr\|^{}_{b,2}\longrightarrow 0$, the claim follows.
\end{proof}

Although the space $L^\infty(\RR^d)\cap \Bap^{\, 2}(\RR^d)$ is dense in $\Bap^{\, 2}(\RR^d)$, the preceding approach cannot extend the result to all $f\in \Bap^{\, 2}(\RR^d)$ because it relies on results which only hold on $L^\infty (\RR^d)$. Nevertheless, considering denseness, one may still expect the result to extend to the entire space.

To conclude this section, we show that the same result still holds for all $f\in \Bap^{\, 2}(\RR^d)$ --- the autocorrelation of a function $f$ is a limit of the autocorrelations of the approximating polynomials.  This is the content of Theorem~\ref{thm:Bap}, our main result for this section. The proof avoids using the reflected Eberlein convolution, which is not well defined on the entire space $\Bap^{\, 2}(\RR^d)$; see Remark \ref{rem:non-TB-challenges}. Therefore, we start with two lemmas concerning convergence properties of objects that we will use in the proof of Theorem~\ref{thm:Bap}. These lemmas can also be seen as a refinement of the result by Schlottmann \cite[Lem.~1.2]{Sch2000}.

\begin{lemma}
\label{lem:uniform-convergence-compact}
Let $\cA = (A^{}_n)^{}_{n\in\NN}$ be a van Hove sequence. If $f \in \Lloc{1}(\RR^d)$ is such that 
\[
    \myfrac{1}{\vol(A^{}_n)} \res{A^{}_{n}}{f} \ast \,\widetilde{\res{A^{}_{n}}{f}} \longrightarrow h
\] 
uniformly on compact sets, then the autocorrelation $\gamma^{}_\mu$ for $\mu = f\lambda$ exists along $\cA$, and we have
\[
    \gamma^{}_\mu \, = \, h\lambda \ts .
\]
\end{lemma}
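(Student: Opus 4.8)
The plan is to reduce everything to the defining formula for $\gamma^{}_\mu$ in Definition~\ref{def:autocorrelation}(b) and to show that vague convergence of the normalised measure convolutions follows directly from the assumed uniform-on-compacts convergence of their densities. First I would observe that restricting the measure is the same as restricting the density, $\res{A^{}_{n}}{\mu} = (\res{A^{}_{n}}{f})\,\lambda$, and that for absolutely continuous measures with compactly supported $L^1$ densities both the reflection and the convolution pass to the densities: one checks $\widetilde{g\lambda} = \widetilde{g}\,\lambda$ from the definition $\widetilde{\mu}(\varphi)=\overline{\mu(\widetilde{\varphi})}$, and $(u\lambda)\ast(v\lambda) = (u\ast v)\lambda$ by Fubini, the latter convolution being a well-defined, compactly supported $L^1$ function since $\res{A^{}_{n}}{f}\in L^1(\RR^d)$ has compact support (as $f\in\Lloc{1}(\RR^d)$ and $A^{}_{n}$ is precompact). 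Consequently the $n$-th term in the definition of $\gamma^{}_\mu$ is exactly the measure $g^{}_{n}\lambda$, where $g^{}_{n} \defeq \myfrac{1}{\vol(A^{}_{n})}\res{A^{}_{n}}{f}\ast\widetilde{\res{A^{}_{n}}{f}}$ is precisely the function appearing in the hypothesis.

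Next I would confirm that $h\lambda$ is genuinely a measure, i.e.\ that $h\in\Lloc{1}(\RR^d)$. For any compact $K$, uniform convergence on $K$ gives, for large $n$, the bound $\int_{K}|h|\,\dd\lambda \leqslant \int_{K}|g^{}_{n}|\,\dd\lambda + \vol(K)\,\sup_{x\in K}|g^{}_{n}(x)-h(x)|$, and the right-hand side is finite because $g^{}_{n}\in L^1(\RR^d)$ and the supremum tends to $0$. Hence $h$ is locally integrable and $h\lambda$ defines a measure.

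Finally I would verify vague convergence $g^{}_{n}\lambda \longrightarrow h\lambda$. For a test function $\varphi\in\Cc(\RR^d)$ with support $K$, the estimate
\[
    \Bigl| \int_{\RR^d}\varphi\,(g^{}_{n}-h)\,\dd\lambda \Bigr| \,\leqslant\, \|\varphi\|^{}_{\infty}\,\vol(K)\,\sup_{x\in K}|g^{}_{n}(x)-h(x)|
\]
tends to $0$ by the hypothesised uniform convergence on the compact set $K$. Thus $g^{}_{n}\lambda(\varphi)\to h\lambda(\varphi)$ for every $\varphi\in\Cc(\RR^d)$, which is exactly vague convergence, so the limit in Definition~\ref{def:autocorrelation}(b) exists and equals $h\lambda$, giving $\gamma^{}_\mu = h\lambda$.

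The substance of the argument is not the final estimate, which is routine, but the bookkeeping in the first step: the passage from the convolution and reflection of the \emph{measures} $\res{A^{}_{n}}{\mu}$ to the corresponding operations on the \emph{densities} $\res{A^{}_{n}}{f}$. The point I expect to require the most care is ensuring these objects are honest locally finite measures throughout --- which is where the compact support of the restricted densities is essential, since it guarantees $\res{A^{}_{n}}{f}\ast\widetilde{\res{A^{}_{n}}{f}}$ is a genuine compactly supported $L^1$ function rather than merely a formal expression. Once that identification is in place, the uniform-on-compacts hypothesis upgrades to vague convergence with no further difficulty.
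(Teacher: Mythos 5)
Your proposal is correct and follows essentially the same route as the paper's proof: both reduce vague convergence of $\frac{1}{\vol(A^{}_n)}\res{A^{}_{n}}{(f\lambda)} \ast \widetilde{\,\res{A^{}_{n}}{(f\lambda)}}$ to the estimate $\bigl|\int \varphi\,(g^{}_n - h)\,\dd\lambda\bigr| \leqslant \|\varphi\|^{}_\infty \int_K |g^{}_n - h|\,\dd\lambda \to 0$ on the support of a test function. Your additional bookkeeping --- the identities $\widetilde{g\lambda} = \widetilde{g}\,\lambda$ and $(u\lambda)\ast(v\lambda) = (u\ast v)\lambda$ for compactly supported $L^1$ densities, and the local integrability of $h$ --- is left implicit in the paper's first display but is a sound and welcome elaboration rather than a deviation.
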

\begin{proof}
Let $\varphi \in \Cc(\RR^d)$ and let $K$ be a compact set containing $\supp(\varphi)$.  For each~$n$, we have 
\[
    \res{A^{}_{n}}{(f\lambda)} \ast \widetilde{\,\res{A^{}_{n}}{(f\lambda)}} \,(\varphi) \, = \, \int_K \varphi(s) \bigl(\res{A^{}_{n}}{f}* \widetilde{\,\res{A^{}_{n}}{f}}\bigr)(s) \ts  \dd s \,,
\]
which implies
\begin{align*}
    \lim_{n\to \infty} & \left|\myfrac{1}{\vol(A^{}_n)} \res{A^{}_{n}}{(f\lambda)} \ast \widetilde{\,\res{A^{}_{n}}{(f\lambda)}}(\varphi) \,- \, \int_{\RR^d} \varphi(s) h(s) \ts \dd s\right| \\
    & \qquad \qquad \leqslant \, \lim_{n\to \infty} \|\varphi\|^{}_\infty \int_K \left| \myfrac{1}{\vol(A^{}_n)} \bigl( \res{A^{}_{n}}{f}* \widetilde{\,\res{A^{}_{n}}{f}}\bigr)(s) - h(s) \right| \ts \dd s \, =  \, 0\ts .
\end{align*} 
This shows that for all $\varphi \in \Cc(\RR^d)$, we get 
\[
    \lim_{n\to \infty} \myfrac{1}{\vol(A^{}_n)} \res{A^{}_{n}}{(f\lambda)} \ast \widetilde{\, \res{A^{}_{n}}{(f\lambda)}} \, (\varphi) \, = \, \int_{\RR^d} \varphi(s) h(s) \,\dd s \ts .
\]
Therefore, the sequence $\frac{1}{\vol(A^{}_n)} \res{A^{}_{n}}{(f\lambda)} \ast \widetilde{\, \res{A^{}_{n}}{(f\lambda)}}$ converges to $h\lambda$ vaguely, proving the claim.
\end{proof}

\begin{lemma}\label{lem:P^{}_N-uniform}
Let $\cA = (A^{}_n)^{}_{n\in\NN}$ be a van Hove sequence,  let $P(x) = \sum_{k=1}^N c^{}_k \ee^{2 \pi \ii x \cdot y^{}_k}$ be a trigonometric polynomial, and let $h(x) = \sum_{k=1}^N |c^{}_k|^2\ee^{2 \pi \ii x \cdot y^{}_k}$.  Then the following statements hold.
\begin{enumerate}[label=(\alph*)]
\item The sequence $\frac{1}{\vol(A^{}_n)} \res{A^{}_{n}}{P} \ast \, \widetilde{P}$ converges uniformly to $h$;
\item The sequence $\frac{1}{\vol(A^{}_n)} P \ast \, \widetilde{\res{A^{}_{n}}{P}}$ converges uniformly to $h$;
\item The sequence $\frac{1}{\vol(A^{}_n)} \res{A^{}_{n}}{P} \ast \, \widetilde{\res{A^{}_{n}}{P}}$ converges uniformly on compact sets to $h$\,.
\end{enumerate}
In particular, the autocorrelation $\gamma^{}_\mu$ for $\mu = P\lambda$ exists along $\cA$ and 
\[
\gamma^{}_\mu \, = \, h\lambda \,.
\]
\end{lemma}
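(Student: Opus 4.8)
The plan is to reduce all three statements to a single fact about mean values of characters. Set $M^{}_n(\xi) \defeq \frac{1}{\vol(A^{}_n)}\int_{A^{}_n}\ee^{2\pi\ii s\cdot\xi}\,\dd s$; then $M^{}_n(0)=1$ for every $n$, while $M^{}_n(\xi)\to 0$ whenever $\xi\neq 0$. I would establish the latter using only the F{\o}lner property: the substitution $s\mapsto s+u$ gives $\ee^{2\pi\ii u\cdot\xi}\,\vol(A^{}_n)\,M^{}_n(\xi) = \int_{u+A^{}_n}\ee^{2\pi\ii s\cdot\xi}\,\dd s$, and replacing the domain $u+A^{}_n$ by $A^{}_n$ introduces an error bounded by $\vol\big(A^{}_n\Delta(u+A^{}_n)\big)=o(\vol(A^{}_n))$. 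Hence $(\ee^{2\pi\ii u\cdot\xi}-1)\,M^{}_n(\xi)\to 0$, and since $\xi\neq 0$ permits a choice of $u$ with $\ee^{2\pi\ii u\cdot\xi}\neq 1$, this forces $M^{}_n(\xi)\to 0$.

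For part (a), I would expand the convolution directly. Using $(\res{A^{}_n}{P}\ast\widetilde{P})(x)=\int_{A^{}_n}P(s)\,\overline{P(s-x)}\,\dd s$ and multiplying out the two finite sums, the integrand equals $\sum_{j,k}c^{}_j\overline{c^{}_k}\,\ee^{2\pi\ii x\cdot y^{}_k}\,\ee^{2\pi\ii s\cdot(y^{}_j-y^{}_k)}$, so dividing by $\vol(A^{}_n)$ produces $\sum_{j,k}c^{}_j\overline{c^{}_k}\,\ee^{2\pi\ii x\cdot y^{}_k}\,M^{}_n(y^{}_j-y^{}_k)$. As the frequencies $y^{}_k$ are distinct, only the diagonal terms $j=k$ survive in the limit, giving exactly $h(x)$; and because the sum is finite with $|\ee^{2\pi\ii x\cdot y^{}_k}|=1$, the remainder is bounded by $\sum_{j\neq k}|c^{}_j|\,|c^{}_k|\,|M^{}_n(y^{}_j-y^{}_k)|$ independently of $x$, yielding uniform convergence. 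Part (b) runs identically, starting instead from $(P\ast\widetilde{\res{A^{}_n}{P}})(x)=\int_{A^{}_n}P(x+t)\,\overline{P(t)}\,\dd t$; the surviving diagonal again collapses to $h(x)$.

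For part (c) I would compare the two-sided restriction to part (a). The two restrictions force both $s\in A^{}_n$ and $s-x\in A^{}_n$, so $(\res{A^{}_n}{P}\ast\widetilde{\res{A^{}_n}{P}})(x)=\int_{A^{}_n\cap(x+A^{}_n)}P(s)\,\overline{P(s-x)}\,\dd s$, which differs from the integral in (a) only by the contribution over $A^{}_n\setminus(x+A^{}_n)$. This is the step where the van Hove property, rather than merely the F{\o}lner property, is needed, and is the main obstacle: for $x$ in a fixed compact set $K$, the observation recorded after the van Hove boundary is defined gives $A^{}_n\setminus(x+A^{}_n)\subseteq\partial^K(A^{}_n)$, so this set has volume $o(\vol(A^{}_n))$ uniformly in $x\in K$. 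Since $|P(s)\,\overline{P(s-x)}|\leqslant\big(\sum_k|c^{}_k|\big)^2$ is a fixed bound, the discrepancy tends to $0$ uniformly on $K$; together with the uniform convergence from (a), this gives uniform convergence to $h$ on compact sets.

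Finally, the concluding assertion is immediate: part (c) is precisely the hypothesis of Lemma~\ref{lem:uniform-convergence-compact} applied to $f=P$, which is continuous and hence lies in $\Lloc{1}(\RR^d)$, so the autocorrelation $\gamma^{}_\mu$ of $\mu=P\lambda$ exists along $\cA$ and equals $h\lambda$.
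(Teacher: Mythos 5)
Your proof is correct and follows essentially the same route as the paper's: expand the convolutions into characters so that only the diagonal terms survive, with an error bound independent of the argument; prove (c) by comparing with (a) via the containment $A^{}_n\setminus(x+A^{}_n)\subseteq\partial^K(A^{}_n)$ together with the van Hove property; and conclude by applying Lemma~\ref{lem:uniform-convergence-compact} to $f=P$, which is continuous and hence locally integrable. Where you go beyond the paper is in justifying the key cancellation, namely that your $M^{}_n(\xi)\to 0$ for $\xi\neq 0$: the paper dismisses this step as following ``by a direct computation'', which is adequate for cubes or balls, but for an arbitrary van Hove sequence your F{\o}lner translation trick is exactly the argument needed, so you have filled a small but genuine gap. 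A further consequence of your treatment is worth flagging: since both (a) and (b) reduce to the same statement about $M^{}_n$ (the restricted domain of integration in (b) is again $A^{}_n$, independent of the argument), your proof shows that (b), like (a), holds for every F{\o}lner sequence; the remark following the lemma in the paper asserts instead that (b) requires the van Hove property, which your argument contradicts --- presumably that remark was intended to refer to (c), the only part of your proof (and of the paper's) that actually invokes the van Hove boundary.
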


\begin{proof}
A straightforward computation shows
\[
\bigl(\res{A^{}_{n}}{P} \ast \, \widetilde{P}\ts \bigr)(u) \,= \, \sum_{k, \ell=1}^N  c^{}_k \ts \overline{c^{}_\ell} \ts \ee^{2\pi \ii u\cdot y^{}_\ell}\int_{A^{}_n} \ee^{2\pi \ii s\cdot (y^{}_k- y^{}_\ell)} \, \dd s \ts ,
\]
which leads to the  estimate
\begin{equation}\label{eq:P^{}_N-uniform-est-1}
\begin{split}
\Big|&\myfrac{1}{\vol{A^{}_n}} \bigl(\res{A^{}_{n}}{P} \ast \, \widetilde{P}\bigr) (u)- h(u)\Big| \,\leqslant \, \sum_{\substack{k,\ell = 1 \\ k \neq \ell}}^N \left|c^{}_k \ts \overline{c^{}_\ell} \right| \left|\myfrac{1}{\vol{A^{}_n}} \int_{A^{}_n} \ee^{2\pi \ii s\cdot (y^{}_k- y^{}_\ell)}\, \dd s\right| \ts .
\end{split}
\end{equation}
This upper bound does not depend on $u$, and since 
\begin{equation*}
    \lim_{n\to \infty} \sum_{\substack{k,\ell = 1 \\ k \neq \ell}}^N \left|c^{}_k \ts \overline{c^{}_\ell} \right| \left|\myfrac{1}{\vol{A^{}_n}} \int_{A^{}_n} \ee^{2\pi \ii s\cdot (y^{}_k- y^{}_\ell)}\, \dd s\right| \, =\,  0 
\end{equation*}
by a direct computation, (a) holds.  The proof of (b) is similar.

Next, to prove (c), let $K\subseteq \RR^d$ be compact. For any $u \in K$, we have
\begin{equation*}\label{eq:P^{}_N-uniform-triangle}
\begin{split}
\Big|\myfrac{1}{\vol{A^{}_n}} &\bigl( \res{A^{}_{n}}{P} \ast \widetilde{\,\res{A^{}_{n}}{P}} \bigr)(u) - h(u) \Big| \\[2pt] 
& \quad  \, \leqslant  \, \myfrac{1}{\vol{A^{}_n}} \left|\bigl(\res{A^{}_{n}}{P} \ast \widetilde{\,\res{A^{}_{n}}{P}} \bigr)(u) - \bigl(\res{A^{}_{n}}{P} \ast \, \widetilde{P} \bigr) (u)\right| + \Big|\myfrac{1}{\vol{A^{}_n}} \bigl(\res{A^{}_{n}}{P} \ast \,\widetilde{P}\bigr) (u) - h(u)\Big|\ts .
\end{split}
\end{equation*}
By (a), it suffices to show that the first term converges uniformly on $K$ to $0$. To do so, we employ the following upper bound:
\begin{equation}\label{eq:P^{}_N-uniform-est-2}
\begin{split}
\myfrac{1}{\vol{A^{}_n}} & \left|\bigl(\res{A^{}_{n}}{P} \ast \widetilde{\,\res{A^{}_{n}}{P}} \bigr) (u) - \bigl(\res{A^{}_{n}}{P} \ast \, \widetilde{P} \bigr) (u)\right|  \\ 
& \,= \, \myfrac{1}{\vol{A^{}_n}} \left|\int_{\RR^d}P(s)  \widetilde{P(s-u)} \boldsymbol{1}^{}_{A^{}_{n}}(s)\boldsymbol{1}^{}_{A^{}_{n}}(s-u)  \, \dd s  - \int_{\RR^d}P(s)\widetilde{P(s-u)} \boldsymbol{1}^{}_{A^{}_{n}}(s) \dd s\right| \\[2pt] 
&\,\leqslant  \, \|P\|^{2}_\infty \cdot \myfrac{1}{\vol{A^{}_n}} \int_{\RR^d} \bigl|\boldsymbol{1}^{}_{A^{}_{n}}(s)\cdot (\boldsymbol{1}^{}_{A^{}_{n}}(s-u)-1)\bigr| \,\dd s \\[2pt] 
&\,= \, \|P\|^{2}_\infty \cdot \myfrac{\vol(A^{}_n \backslash (u+A^{}_n))}{\vol(A^{}_n)} \,\leqslant \, \|P\|^{2}_\infty \cdot \myfrac{\vol(\partial^K(A^{}_n))}{A^{}_n} \ts ,
\end{split}
\end{equation}
for all $n \in \NN$. Now, we profit from the van Hove property and obtain
\begin{equation*}
    \lim_{n\to \infty} \|P\|^{2}_\infty \cdot \myfrac{\vol(\partial^K(A^{}_n))}{A^{}_n} = 0 \ts .
\end{equation*}
Observe that the estimate in Eq.~\eqref{eq:P^{}_N-uniform-est-2} does not depend on $u \in K$. This implies uniform convergence on $K$.  Since $K$ was arbitrary, the claim (c) holds.

Lastly, it follows from (c) and Lemma~\ref{lem:uniform-convergence-compact} that $\gamma^{}_\mu$ exists along $\cA$ for $\mu = P\lambda$ and we have $\gamma^{}_\mu = h\lambda$.
\end{proof}

\begin{remark}
The proof of Lemma~\ref{lem:P^{}_N-uniform} also shows that (a) holds for any F{\o}lner sequence, whereas (b) requires the van Hove property. Moreover, by a limiting argument, one can also show that this result extends to the Bohr almost periodic functions, as any such function can be uniformly approximated by a~sequence of trigonometric polynomials \cite[Thm.~4.3.5(iv)]{MS2017}. \exend
\end{remark}

\pagebreak 
\begin{theorem}
\label{thm:Bap}
    Let $\cA = (A^{}_n)^{}_{n\in\NN}$ be a van Hove sequence and let $f\in \Bap^{\, 2}(\RR^d)$. 
    Then, for any sequence of trigonometric polynomials $(P^{}_N)^{}_{N\in\NN}$ with $\bigl\|f-P^{}_N\bigr\|^{}_{b,2,\cA} \longrightarrow 0$, the following statements hold.
    \begin{enumerate}[label=(\alph*)]
    \item There exists a unique function $h \in \SAP(\RR^d)$ such that 
    \[
        \bigl\|\lb P^{}_{N},P^{}_{N} \rb^{}_{\cA} - h \bigr\|^{}_\infty\,  \longrightarrow \, 0  \ts .
    \] 
    \item If $(Q^{}_N)^{}_{N\in\NN}$ is any other sequence of trigonometric polynomials satisfying $\bigl\|f - Q_N\bigr\|^{}_{b,2,\cA} \longrightarrow 0$ as $N\rightarrow \infty$, then 
    \[
        \bigl\|\lb Q_N, Q_N \rb^{}_{\cA} - h\bigr\|^{}_\infty \, \longrightarrow \, 0  \ts .
    \]
    \item The sequence $\frac{1}{\vol(A^{}_n)} \res{A^{}_{n}}{f} \ast \, \widetilde{\res{A^{}_{n}}{f}}$ converges
    uniformly on compact sets to $h$.
    \end{enumerate}
    In particular, the autocorrelation $\gamma^{}_\mu$ for $\mu = f\lambda$ exists 
    along $\cA$ and 
    \[
        \gamma^{}_\mu \, = \, h\lambda \ts.
    \]
\end{theorem}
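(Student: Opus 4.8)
The plan is to establish (a) and (b) from the algebraic structure of the reflected Eberlein convolution on trigonometric polynomials, and then to deduce (c) by a triangle-inequality argument in which an approximating polynomial is inserted and the unbounded cross-terms are controlled by an elementary finite-volume Cauchy--Schwarz estimate in place of Proposition~\ref{prop:CS}.

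For part (a), I would first record that each $\lb P^{}_N, P^{}_N \rb^{}_{\cA}$ exists and is itself a trigonometric polynomial by Lemma~\ref{lem:P^{}_N-uniform}, hence lies in $\SAP(\RR^d)$. Since trigonometric polynomials lie in $L^\infty(\RR^d) \cap BL^2_{\cA}(\RR^d)$, combining the (sesqui-)linearity of Proposition~\ref{prop:linear} with the Cauchy--Schwarz inequality of Proposition~\ref{prop:CS} yields
\[
\bigl\|\lb P^{}_N, P^{}_N \rb^{}_{\cA} - \lb P^{}_M, P^{}_M \rb^{}_{\cA}\bigr\|^{}_\infty \, \leqslant \, \bigl(\|P^{}_N\|^{}_{b,2} + \|P^{}_M\|^{}_{b,2}\bigr)\,\|P^{}_N - P^{}_M\|^{}_{b,2} \ts .
\]
Because $(P^{}_N)$ converges to $f$ in $\|\cdot\|^{}_{b,2,\cA}$ it is Cauchy in that seminorm, and the reverse triangle inequality shows $\|P^{}_N\|^{}_{b,2} \to \|f\|^{}_{b,2} < \infty$, so the sequence $(\lb P^{}_N, P^{}_N \rb^{}_{\cA})$ is Cauchy in $\|\cdot\|^{}_\infty$. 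As $\SAP(\RR^d)$ is complete in the supremum norm, the limit $h$ exists in $\SAP(\RR^d)$, and uniqueness is immediate. Part (b) follows from the same estimate applied to the pair $(P^{}_N, Q^{}_N)$, since $\|P^{}_N - Q^{}_N\|^{}_{b,2} \leqslant \|f - P^{}_N\|^{}_{b,2} + \|f - Q^{}_N\|^{}_{b,2} \to 0$ forces the two limits to coincide.

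The crux is part (c), and the main obstacle is that $f$ need not be bounded, so Proposition~\ref{prop:CS} cannot be invoked directly. Writing $A^{f}_n \defeq \frac{1}{\vol(A^{}_n)}\res{A^{}_n}{f} \ast \widetilde{\res{A^{}_n}{f}}$ and $h^{}_N \defeq \lb P^{}_N, P^{}_N \rb^{}_{\cA}$, I would fix a compact set $K$ and bound $\sup_{u\in K}|A^{f}_n(u) - h(u)|$ by the triangle inequality through $A^{P^{}_N}_n$ and $h^{}_N$. The term $\sup_{u\in K}|A^{P^{}_N}_n(u) - h^{}_N(u)| \to 0$ as $n\to\infty$ by Lemma~\ref{lem:P^{}_N-uniform}(c), while $\|h^{}_N - h\|^{}_\infty \to 0$ by part (a). For the remaining difference, set $g = f - P^{}_N$, expand $\res{A^{}_n}{f} \ast \widetilde{\res{A^{}_n}{f}}$ into the polynomial autocorrelation plus the two cross-terms and $\res{A^{}_n}{g} \ast \widetilde{\res{A^{}_n}{g}}$, and estimate each piece by the elementary inequality
\[
\frac{1}{\vol(A^{}_n)}\bigl|\res{A^{}_n}{\phi} \ast \widetilde{\res{A^{}_n}{\psi}}(u)\bigr| \, \leqslant \, \Bigl(\frac{1}{\vol(A^{}_n)}\int_{A^{}_n}|\phi|^2\Bigr)^{\frac12}\Bigl(\frac{1}{\vol(A^{}_n)}\int_{A^{}_n}|\psi|^2\Bigr)^{\frac12} \ts ,
\]
which holds uniformly in $u$ because the substitution $s \mapsto s-u$ removes the $u$-dependence from the second factor. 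Taking $\limsup_n$ gives $\limsup_n \sup_u |A^{f}_n(u) - A^{P^{}_N}_n(u)| \leqslant 2\|P^{}_N\|^{}_{b,2}\|f-P^{}_N\|^{}_{b,2} + \|f-P^{}_N\|^2_{b,2} =: \delta^{}_N$. Combining the three bounds yields $\limsup_n \sup_{u\in K}|A^{f}_n(u) - h(u)| \leqslant \delta^{}_N + \|h^{}_N - h\|^{}_\infty$ for every $N$; letting $N\to\infty$ forces this $\limsup$ to vanish, which is precisely uniform convergence on $K$. As $K$ was arbitrary, (c) holds, and the final assertion $\gamma^{}_\mu = h\lambda$ follows at once from Lemma~\ref{lem:uniform-convergence-compact}.
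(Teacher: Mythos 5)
Your proposal is correct and takes essentially the same route as the paper: parts (a) and (b) rest on the Cauchy--Schwarz bound of Proposition~\ref{prop:CS} plus closedness of $\SAP(\RR^d)$ in $(\Cu(\RR^d),\|\cdot\|^{}_\infty)$, and part (c) is the same three-term triangle inequality through $\res{A^{}_n}{P^{}_N} \ast \widetilde{\res{A^{}_n}{P^{}_N}}$ and $h^{}_N$, with the unbounded difference controlled by exactly the elementary finite-volume Cauchy--Schwarz estimate the paper uses (you expand into four terms via $g = f - P^{}_N$ where the paper telescopes into two, but the resulting bounds are interchangeable), finishing with Lemma~\ref{lem:P^{}_N-uniform}(c) and Lemma~\ref{lem:uniform-convergence-compact}. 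Your $\limsup_n$-then-$N\to\infty$ packaging of (c) is just a cleaner rewriting of the paper's $3\varepsilon$ argument, not a different proof.
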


\begin{proof}
To prove (a), it suffices to show that $\lb P^{}_N, P^{}_N \rb^{}_{\cA}$ is Cauchy in $(\Cu(\RR^d), \|\cdot\|^{}_\infty)$, since uniform limits of trigonometric polynomials are always Bohr almost periodic (see \cite[Thm.~4.3.5]{MS2017}). By Proposition~\ref{prop:CS}, for any $N,M$, we obtain
\begin{align*}
\bigl\|\lb P^{}_N, P^{}_N \rb^{}_{\cA} - \lb P^{}_M, P^{}_M \rb^{}_{\cA}\bigr\|^{}_\infty &\,\leqslant \, \bigl(\|P^{}_N\|^{}_{b,2,\cA} + \|P^{}_M\|^{}_{b,2,\cA}\bigr) \ts \|P^{}_N - P^{}_M\|^{}_{b,2,\cA}\,.
\end{align*}
By assumption, $\|f-P^{}_N\|^{}_{b,2,\cA} \longrightarrow 0$, so $P^{}_N$ is Cauchy in the Besicovitch~$2$-norm.  Thus, $h^{}_{N} = \lb P^{}_N, P^{}_N \rb^{}_{\cA}$ is Cauchy with respect to $\|\cdot\|^{}_\infty$. Moreover, for all $N$, we have $h^{}_{N} \in \SAP(\RR^d)$, which is a closed subspace in $(\Cu(\RR^d), \|\cdot\|^{}_\infty)$. Therefore, there exists $h \in \SAP(\RR^d)$ such that $\|h^{}_N - h\|^{}_\infty \longrightarrow 0$ as $N\longrightarrow \infty$.

To prove (b), assume that $(Q^{}_N)^{}_{N\in\NN}$ is another sequence such that $\|f -  Q^{}_N\|^{}_{b,2,\cA} \longrightarrow 0$.  By (a), there exists a unique function $g \in \SAP(\RR^d)$ such that
\[
    \bigl\|\lb Q^{}_N, Q^{}_N \rb - g\bigr\|^{}_\infty \, \longrightarrow \,  0 \ts .
\]
Using triangle inequality and \eqref{eq:CS}, we have, for all $N \in \NN$, \begin{align*}
\|g - h\|^{}_{\infty} & \leqslant \, \bigl\|g - \lb Q^{}_N, Q^{}_N \rb^{}_{\cA} \bigr\|^{}_{\infty} + \bigl\|\lb Q^{}_N{-}P^{}_N, Q^{}_N \rb^{}_{\cA}\bigr\|^{}_{\infty}  + \bigl\|\lb P^{}_N, Q^{}_N{-}P^{}_N \rb^{}_{\cA}\bigr\|^{}_{\infty} + \bigl\|\lb P^{}_N, P^{}_N \rb^{}_{\cA} - h \bigr\|^{}_{\infty} \\[3pt]
& \leqslant \,  \bigl\|g - \lb Q^{}_N, Q^{}_N \rb^{}_{\cA} \bigr\|^{}_{\infty} + \bigl\|\lb P^{}_N, P^{}_N \rb^{}_{\cA} - h \bigr\|^{}_{\infty}\\[2pt]
& \qquad \qquad \qquad 
+\bigl(\|P^{}_N\|^{}_{b,2,\cA} + \|Q^{}_N\|^{}_{b,2,\cA}\bigr) \ts\cdot \ts  \bigl(\|Q^{}_N - f\|^{}_{b,2,\cA} + \|f - P^{}_N\|^{}_{b,2,\cA}\bigr)\,.  
\end{align*} 
Taking $N\rightarrow \infty$ shows that $\| g - h \|^{}_\infty = 0$, so $g = h$.  Therefore, (b) holds.

To prove (c), let set $K\subseteq \RR^d$ be compact. We use the usual 3 epsilon argument. Indeed, for all $\varepsilon >0$ and for all $u\in K$, one can choose $N\in\NN$ such that for all  $n\in\NN$ sufficiently large, we have
\begin{align}\label{eq:key-inequality}
\Big|\myfrac{1}{\vol(A^{}_{n})} & \bigl(\res{A^{}_{n}}{f} \ast \, \widetilde{\res{A^{}_{n}}{f}}\bigr)(u) - h(u)\Big| \nonumber\\ 
& \, \leqslant  \, \Big|\myfrac{1}{\vol(A^{}_{n})} \bigl( \res{A^{}_{n}}{f} \ast \, \widetilde{\res{A^{}_{n}}{f}} \bigr) (u) - \myfrac{1}{\vol(A^{}_{n})}\bigl(\res{A^{}_{n}}{P^{}_N} \ast \, \widetilde{\res{A^{}_{n}}{P^{}_N}}\bigr) (u) \Big| \\ 
& \quad \quad \quad  +\Big|\myfrac{1}{\vol(A^{}_{n})}\bigl(\res{A^{}_{n}}{P^{}_N} \ast \, \widetilde{\res{A^{}_{n}}{P^{}_N}}\bigr)(u) - h_N(u)\Big| + |h^{}_N(u) - h(u)|  \, < \, \eps \ts.\nonumber
\end{align} 
For the first term, we use the convergence of $P^{}_N$ with respect to the Besicovitch 2-seminorm together with estimate 
\begin{equation*}\label{eq:CS-estimate}
\begin{split}
\myfrac{1}{\vol(A^{}_{n})}&\Big| \bigl(\res{A^{}_{n}}{f} \ast \, \widetilde{\res{A^{}_{n}}{f}} \bigr)(u) - \bigl(\res{A^{}_{n}}{P^{}_N} \ast \, \widetilde{\res{A^{}_{n}}{P^{}_N}} \bigr) (u) \Big| \\[2pt] 
& \leqslant  \bigg( \myfrac{1}{\vol(A^{}_{n})} \int^{}_{A^{}_n} \bigl|(f{-}P^{}_{N})(s) \bigr|^2 \dd s\!\bigg)^{\frac{1}{2}} \lt[\!\bigg( \myfrac{1}{\vol(A^{}_{n})} \int^{}_{A^{}_n} \bigl|f(s)\bigr|^2 \dd s\!\bigg)^{\frac{1}{2}}\! + \bigg( \myfrac{1}{\vol(A^{}_{n})} \int^{}_{A^{}_n} \bigl|P^{}_N(s) \bigr|^2 \dd s\!\bigg)^{\frac{1}{2}} \rt] \ts,
\end{split}
\end{equation*}
which can be obtained using the usual Cauchy--Schwarz inequality and a change of variables. The second term in \eqref{eq:key-inequality} can be made arbitrarily small on $K$ due to Lemma~\ref{lem:P^{}_N-uniform}, and the last term can also be made arbitrarily small due to the convergence of $h^{}_{N}$. As $K$ was arbitrary, this shows that $\frac{1}{\vol(A^{}_{n})}\res{A^{}_{n}}{f} \ast \, \widetilde{\res{A^{}_{n}}{f}}$ converges uniformly on compact sets to the function $h$, so (c) holds. A detailed proof with all the necessary steps can be found in \cite{Kor2026}. 
Lastly, it follows from (c) and Lemma~\ref{lem:uniform-convergence-compact} that $\gamma^{}_\mu$  for $\mu = f\lambda$ exists along $\cA$ and we have $\gamma^{}_\mu = h\lambda$.
\end{proof}

\begin{remark}
In the case when $f \in L^\infty(\RR^d)$, we get that $h = \eta^{}_{f}$ by 
Proposition~\ref{prop:bounded-conv}. Therefore, we recover the result from Proposition~\ref{prop:autocorrelation-reflected} which states that
\[
    \gamma^{}_\mu \, =\,  \eta^{}_{f} \lambda \,,
\]
when $\mu = f \lambda$ is translation bounded, and the result from Corollary~\ref{cor:Besicovitch-ac} which states that $\gamma^{}_{P^{}_N}$ converges uniformly to $\eta^{}_{f}$. \exend
\end{remark}

From this point onward, we use the following definition of autocorrelation for Besicovitch $2$-almost periodic functions, which is justified by the previous theorem. 
\begin{definition}
Let $f \in \Bap^2(\RR^d)$. We write $\eta^{}_f$ to denote the unique function $h \in \SAP(\RR^d)$  such that the autocorrelation $\gamma^{}_\mu$ for the measure $\mu = f \lambda$ satisfies $\gamma^{}_\mu = h \lambda$.  Moreover, since $\eta^{}_f$ is bounded, it defines a tempered distribution, and we denote its Fourier transform by $\widehat{\eta^{}_f}$.
\end{definition}

\noindent Since the measure $\gamma^{}_\mu = \eta^{}_f \lambda$ is positive definite by construction, we get that $\gamma^{}_\mu$ is also Fourier transformable as a measure, with positive Fourier transform, by \cite[Thm.~4.5]{BF1975}.  
Moreover, the Fourier transform of $\gamma_\mu$ as a measure coincides with the Fourier transform of $\eta^{}_f$ as a tempered distribution by \cite[Thm.~5.2]{Str2020}. See also  \cite{SS2019} for explicit examples.

\section{Plane waves and Besicovitch almost periodic functions}
\label{sec:plane}
We can explicitly treat the case of $n$-dimensional plane waves. Indeed, consider $f(x) = \ee^{2\pi\ii a\cdot x}$ and $g(x) = \ee^{2\pi\ii b\cdot x}$ with $a,b \in \RR^d$. Throughout this section, we restrict to the standard van Hove sequence of symmetric (hyper)cubes  $\cA = \bigl([-R,R]^n\bigr)^{}_{R > 0}$.  This is the assumed van Hove sequence, unless stated otherwise.

Then, for the reflected Eberlein convolution of $f$ and $g$, one has
\begin{align*}
\lb f,g \rb (y) & = \, \lim_{R \to \infty} \myfrac{1}{(2R)^n} \int_{[-R,R]^n} \ee^{2\pi\ii a\cdot x} \ee^{-2\pi\ii b\cdot (x-y)} \ts \dd x \, = \, \ee^{2\pi\ii b\cdot y} \lim_{R \to \infty} \prod_{j=1}^n \mathrm{sinc}\bigl(2\pi R(a^{}_{j} - b^{}_j) \bigr)
 \\
& = \, \begin{cases}
   \ee^{2\pi\ii b\cdot y}, & \mbox{if} \ a=b, \\
   0, & \mbox{otherwise.}
\end{cases}
\end{align*}

This implies that the autocorrelation measure of a plane wave described by $f(x) = \ee^{2\pi\ii a\cdot x}$ is absolutely continuous and reads
$ \eta^{}_{f} \, =\,  f$.
By standard arguments, it follows that the result does not change if we consider centred balls instead of cubes. 

Furthermore, we can employ general properties of the reflected Eberlein convolution (see Proposition~\ref{prop:linear}) to obtain the following result for any trigonometric polynomial~$Q$.
\begin{lemma}
    Let $Q(x)$ be a trigonometric polynomial in $n$ variables, i.e., 
    \[Q(x) \, = \, \sum_{\ell =1}^N c^{}_{\ell} \ee^{2\pi\ii d^{}_{\ell} \cdot x} \qquad \mbox{with } x\in\RR^d \]
    for some $N\in \NN$, with $c^{}_{\ell} \in \CC$ and $d^{}_{\ell} \in \RR^d$ for all $\ell$. Then, the natural autocorrelation exists, is absolutely continuous with respect to Lebesgue measure, and its density function reads
    \[ \eta^{}_{Q}(x) \, = \, \sum_{\ell=1}^N |c^{}_{\ell}|^2 \ee^{2\pi\ii d^{}_{\ell} \cdot x}.  \]
    Its diffraction measure reads
    \[ \widehat{\ts \eta^{}_{Q}\ts} \, =\, \sum_{\ell=1}^N |c^{}_{\ell}|^2 \ts \delta^{}_{d^{}_{\ell}} \ts. \]
\end{lemma}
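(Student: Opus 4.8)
The plan is to reduce everything to the single-frequency computation carried out immediately before the lemma, combined with the (sesqui-)linearity of the reflected Eberlein convolution. Write $f^{}_{\ell}(x) = \ee^{2\pi\ii d^{}_{\ell}\cdot x}$, so that $Q = \sum_{\ell=1}^N c^{}_{\ell} f^{}_{\ell}$ with each $f^{}_{\ell}\in L^\infty(\RR^d)$. As is standard for a trigonometric polynomial, I take the frequencies $d^{}_1,\dots,d^{}_N$ to be pairwise distinct (this is exactly what the stated density requires, since coinciding frequencies would have to be merged \emph{before} squaring the coefficients). The preceding plane-wave computation then yields
\[
  \lb f^{}_{k}, f^{}_{\ell}\rb(y) \, = \, \begin{cases} \ee^{2\pi\ii d^{}_{\ell}\cdot y}, & k=\ell,\\[2pt] 0, & k\neq \ell,\end{cases}
\]
for all $y\in\RR^d$; in particular each of these reflected Eberlein convolutions exists along the van Hove sequence fixed in this section, and by the remark already made the same holds for centred balls.

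Next I would invoke Proposition~\ref{prop:linear}. Since every constituent $\lb f^{}_{k}, f^{}_{\ell}\rb$ exists, linearity in the first argument and sesquilinearity in the second give
\[
  \eta^{}_{Q} \, = \, \lb Q, Q\rb \, = \, \sum_{k,\ell=1}^N c^{}_{k}\ts\overline{c^{}_{\ell}}\ts \lb f^{}_{k}, f^{}_{\ell}\rb \, = \, \sum_{\ell=1}^N |c^{}_{\ell}|^2\ts\ee^{2\pi\ii d^{}_{\ell}\cdot x},
\]
where the off-diagonal terms vanish by the orthogonality above. This establishes both the existence of the natural autocorrelation and the claimed density in one stroke. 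Because $Q\in L^\infty(\RR^d)$, the measure $\mu = Q\lambda$ is translation bounded with uniformly continuous density, so Proposition~\ref{prop:autocorrelation-reflected} identifies $\gamma^{}_{\mu} = \eta^{}_{Q}\ts\lambda$, confirming absolute continuity; the same density also drops out directly from Lemma~\ref{lem:P^{}_N-uniform}, which is precisely this statement phrased for $\gamma^{}_{\mu}$.

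Finally, for the diffraction I would observe that $\eta^{}_{Q}$ is a finite trigonometric polynomial, hence bounded, strongly almost periodic, and in particular a tempered distribution. With the paper's convention, the Fourier transform of $x\mapsto\ee^{2\pi\ii d\cdot x}$ is the Dirac measure $\delta^{}_{d}$, so linearity of the Fourier transform gives
\[
  \widehat{\ts\eta^{}_{Q}\ts} \, = \, \sum_{\ell=1}^N |c^{}_{\ell}|^2\ts\delta^{}_{d^{}_{\ell}},
\]
the asserted positive pure-point diffraction measure, consistent with the Bochner--Schwartz theorem.

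I do not expect a genuine obstacle here: the essential input, namely the orthogonality of distinct plane waves under the reflected Eberlein convolution, has already been computed, so the lemma is in effect a corollary of that computation and of Proposition~\ref{prop:linear}. The only points requiring care are verifying the existence proviso of Proposition~\ref{prop:linear} for every cross term (which holds since each $\lb f^{}_{k}, f^{}_{\ell}\rb$ was shown to exist) and keeping the standing assumption that the frequencies are pairwise distinct, without which the coefficients in the density would not simply square; both are routine bookkeeping rather than real difficulties.
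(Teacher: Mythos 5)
Your proposal is correct and follows essentially the same route as the paper: expand $\lb Q, Q\rb$ via the (sesqui-)linearity of the reflected Eberlein convolution (Proposition~\ref{prop:linear}), kill the off-diagonal terms by the plane-wave orthogonality computed just before the lemma, and obtain the diffraction by linearity of the Fourier transform. Your additional remarks --- insisting on pairwise distinct frequencies and citing Proposition~\ref{prop:autocorrelation-reflected} to justify $\gamma^{}_{\mu} = \eta^{}_{Q}\lambda$ --- merely make explicit what the paper leaves implicit.
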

\begin{proof}
    The proof is easy and follows from the (sesqui-)linearity of the reflected Eberlein convolution, as
    \[ \eta^{}_{Q}(x) \, = \, \lb Q, Q \rb(x) \, = \, \sum_{k,\ell=1}^N  c^{}_{k}\overline{c^{}_{\ell}} \, \lb \ee^{2\pi\ii d^{}_{k}}, \ee^{2\pi\ii d^{}_{\ell}}\rb \, =\, \sum_{\ell=1}^N |c^{}_{\ell}|^2 \ee^{2\pi\ii d^{}_{\ell} \cdot x}. \]
    The rest follows from the linearity of the Fourier transform. 
\end{proof}

\begin{remark}
\label{rem:plane}
As we shall see later, the symmetric choice 
$Q(x) = \frac{1}{\sqrt{2}} \bigl( \ee^{2 \pi \ii r \cdot x} + \ee^{-2 \pi\ii r x}\bigr) = \sqrt{2} \cos (2 \pi r \cdot x)$ is especially interesting.  When $r\ne 0$, this
leads to the autocorrelation $\eta (x) = \cos (2 \pi r \cdot x)$ and the
diffraction measure $\widehat{\eta} = \frac{1}{2} (\delta^{}_{r} + \delta^{}_{-r})$. In one dimension, this represents the uniform distribution measure on $\{ -r, r \}$, in other words $\theta^{}_{r}$ as defined in Definition \ref{def:spher_meas}. \exend
\end{remark}

We now move to the realm of Besicovitch almost periodic functions, which can be understood as functions that can be approximated with arbitrary precision by trigonometric polynomials (with respect to the Besicovitch seminorm!). Recall from Theorem~\ref{thm:hilbert} that $f \in \Bap^{\, 2}(\RR^d)$ if and only if $f$ is given by
\begin{equation}
\label{eq:f_expansion}
f(x) \, = \, \sum_{m=1}^\infty c^{}_m \ee^{2 \pi \ii a^{}_m \cdot x} 
\ts ,
\end{equation}
for a sequence $(c^{}_m)^{}_{m\in \NN}$ in $\ell^{2}_{}$ and points $a^{}_m$ in $\RR^d$.  Then
\begin{equation}
\label{eq:P^{}_{N}}
P^{}_N(x) \, = \, \sum_{m=1}^N c^{}_m \ee^{2 \pi \ii a^{}_m \cdot x} \ts ,
\end{equation}
is a sequence of trigonometric polynomials with $\bigl\|f-P^{}_N\bigr\|^{}_{b,2} \, \longrightarrow \, 0$, and by Proposition \ref{thm:Bap}, we have that
\[
\eta^{}_f(x) \, =\,  \sum_{m=1}^\infty |c^{}_m|^2 \ts \ee^{2 \pi \ii a^{}_m \cdot x} \, ,
\]
belongs to $\SAP(\RR^d)$.  Thus, we obtain the following result. The proof we present uses abstract results, profiting from the key observation that $\eta^{}_{f}\in SAP(\RR^d)$.

\begin{theorem}\label{thm:Besicovitch-diffraction}
    Let $f\in \Bap^{\, 2}(\RR^d)$ and suppose that $f$ can be written as in Eq.~\eqref{eq:f_expansion}. Then, $\eta^{}_{f}$ exists and the diffraction reads
    \[ \widehat{\eta^{}_{f}} \, = \, \sum_{m=1}^{\infty} |c^{}_{m}|^2_{} \, \delta^{}_{a^{}_{m}} \ts .\]
\end{theorem}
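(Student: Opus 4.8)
The plan is to Fourier transform the autocorrelation directly, profiting from the observation --- already recorded above through Theorem~\ref{thm:Bap} together with the computation for trigonometric polynomials --- that $\eta^{}_f$ exists and is given by the series
\[
    \eta^{}_f(x) \, = \, \sum_{m=1}^\infty |c^{}_m|^2 \ts \ee^{2\pi\ii a^{}_m \cdot x} \, \in \, \SAP(\RR^d) \ts .
\]
Since $(c^{}_m)^{}_{m\in\NN}\in\ell^2$, the weights $(|c^{}_m|^2)^{}_{m\in\NN}$ lie in $\ell^1$, so this series converges uniformly by the Weierstrass $M$-test. The essential point is that even though $f$ itself need not belong to $\SAP(\RR^d)$, and may be unbounded, its autocorrelation does belong to $\SAP(\RR^d)$; and the Fourier transform of a strongly almost periodic function, interpreted as a tempered distribution, is the pure point measure whose atoms are its Fourier--Bohr coefficients.

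First I would truncate, setting $P^{}_N(x)=\sum_{m=1}^N c^{}_m\ee^{2\pi\ii a^{}_m\cdot x}$, so that $\eta^{}_{P^{}_N}(x)=\sum_{m=1}^N |c^{}_m|^2\ee^{2\pi\ii a^{}_m\cdot x}$ has diffraction $\widehat{\eta^{}_{P^{}_N}}=\sum_{m=1}^N|c^{}_m|^2\ts\delta^{}_{a^{}_m}$ by the lemma for trigonometric polynomials proved above. Because $\eta^{}_{P^{}_N}\to\eta^{}_f$ uniformly, the convergence also holds in the space of tempered distributions: for every Schwartz function $\varphi$ one has $\bigl|(\eta^{}_f-\eta^{}_{P^{}_N})(\varphi)\bigr| \leqslant \|\eta^{}_f-\eta^{}_{P^{}_N}\|^{}_\infty \ts \|\varphi\|^{}_1 \to 0$, using that Schwartz functions are integrable. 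Continuity of the Fourier transform on tempered distributions then yields $\widehat{\eta^{}_{P^{}_N}}\to\widehat{\eta^{}_f}$.

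On the measure side, the $\ell^1$-summability of $(|c^{}_m|^2)^{}_{m\in\NN}$ ensures that the partial sums $\sum_{m=1}^N |c^{}_m|^2 \delta^{}_{a^{}_m}$ converge in total variation, and hence as tempered distributions, to the finite pure point measure $\sum_{m=1}^\infty |c^{}_m|^2 \delta^{}_{a^{}_m}$. Since limits in the tempered distribution topology are unique, comparing the two computations gives $\widehat{\eta^{}_f} = \sum_{m=1}^\infty |c^{}_m|^2 \delta^{}_{a^{}_m}$. Equivalently, one may argue abstractly: the Fourier--Bohr coefficient of $\eta^{}_f$ at a frequency $\lambda$ is the inner product $\langle[\eta^{}_f],[\ee^{2\pi\ii\lambda\cdot x}]\rangle^{}_{\cA}$ of Theorem~\ref{thm:hilbert}, which by orthonormality equals $|c^{}_m|^2$ when $\lambda=a^{}_m$ and vanishes otherwise, so the pure point transform of $\eta^{}_f$ has exactly these atoms. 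The only genuine obstacle is the interchange of the Fourier transform with the infinite sum, and this is resolved precisely by the two complementary facts above: uniform (hence distributional) convergence of the functions $\eta^{}_{P^{}_N}$, and $\ell^1$-convergence of the atomic weights of the measures, linked through continuity of the Fourier transform. No decay on the frequencies $a^{}_m$ or integrability beyond $(c^{}_m)^{}_{m\in\NN}\in\ell^2$ is required.
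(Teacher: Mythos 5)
Your proof is correct, but it follows a genuinely different route from the paper's. The paper first invokes the abstract fact that strongly almost periodic functions have pure point diffraction (so $\widehat{\eta^{}_f}$ is pure point a priori), and then computes the atoms one at a time via the Bragg-peak intensity formula $\widehat{\mu}(\{x\}) = \lim_{n} \frac{1}{\vol(A^{}_n)}\int_{A^{}_n}\ee^{-2\pi\ii x\cdot y}\,\dd\mu(y)$, applied to the translation-bounded measures $\eta^{}_f\lambda$ and $\eta^{}_{P^{}_N}\lambda$; the uniform bound $|\widehat{\eta^{}_f}(\{x\})-\widehat{\eta^{}_{P^{}_N}}(\{x\})| \leqslant \|\eta^{}_f-\eta^{}_{P^{}_N}\|^{}_\infty$ then identifies each amplitude in the limit. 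You instead work entirely in $\cS'(\RR^d)$: uniform convergence $\eta^{}_{P^{}_N}\to\eta^{}_f$ gives distributional convergence, continuity of the Fourier transform gives $\widehat{\eta^{}_{P^{}_N}}\to\widehat{\eta^{}_f}$, while on the other side $\ell^1$-summability of $(|c^{}_m|^2)^{}_{m\in\NN}$ gives total-variation (hence distributional) convergence of $\sum_{m=1}^N|c^{}_m|^2\delta^{}_{a^{}_m}$ to the claimed finite measure; uniqueness of weak-$*$ limits finishes the argument. Your route is more self-contained --- pure-pointedness falls out automatically from identifying the limit, with no appeal to the almost-periodic-measure machinery of Meyer--Strungaru --- and your total-variation step settles the interchange of sum and transform cleanly. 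Notably, your strategy is essentially the one the paper itself adopts later for the radial analogue (Theorem~\ref{thm:Brap-diffraction}), where the authors also pass to tempered distributions and use continuity of the Fourier transform; so your proof could serve there as well, replacing $\delta^{}_{a^{}_m}$ by $\theta^{}_{|a^{}_m|}$ (these have total mass one, so the same total-variation estimate applies). One small remark: the closing ``abstract'' alternative you sketch (Fourier--Bohr coefficients via the inner product of Theorem~\ref{thm:hilbert}) is precisely the paper's route, and unlike your main argument it does require the cited results on almost periodic measures to be rigorous; your main argument does not need it.
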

\begin{proof}
Since $f\in \Bap^{\, 2}(\RR^d)$, the autocorrelation $\eta^{}_{f}$ exists and is strongly almost periodic by Theorem~\ref{thm:Bap}. It follows that the diffraction $\widehat{\eta^{}_f}$ is pure point \cite[Cor.~4.10.13]{MS2017}. 

To compute the diffraction amplitudes, recall that the amplitude of a Bragg peak of a translation-bounded measure $\mu$ on $\RR^d$ is given by \cite[Thm.~4.10.14]{MS2017} 
\begin{equation}\label{eq:intensity-formula}
    \widehat{\mu}(\{x\}) \, = \, \lim_{n\to \infty} \myfrac{1}{\vol(A^{}_{n})} \int_{A^{}_n} e^{-2\pi i x \cdot y} \,\dd \mu(x) \,.
\end{equation}
Fix $N\in\NN$ and let $(P^{}_{N})_{N\in\NN}$ be the sequence of approximating trigonometric polynomials given in Eq.~\eqref{eq:P^{}_{N}}.  Since both $\eta^{}_f$ and $\eta_{P^{}_{N}}$ are in $\Cu(\RR^d)$, they are translation bounded when interpreted as measures, so we can apply Eq.~\ref{eq:intensity-formula} to obtain
\begin{align*} 
\bigl|\widehat{\eta^{}_{f}} (\{x\}) - \widehat{\eta^{}_{P^{}_N}} (\{x\}) \bigr|  \, &= \, \Big| \lim_{n\to \infty} \myfrac{1}{\vol(A^{}_{n})} \int_{A^{}_{n}}  \ee^{-2\pi \ii x \cdot y} (\eta^{}_{f}(y) - \eta^{}_{P^{}_{N}}(y)) \,\dd y \Big|  \\ 
&\leqslant \, \lim_{n\to \infty} \myfrac{1}{\vol(A^{}_{n})} \int_{A^{}_{n}} |\eta^{}_{f}(y) - \eta^{}_{P^{}_{N}}(y)| \,\dd y\\ 
&\leqslant  \, \|\eta^{}_{f} - \eta^{}_{P^{}_{N}}\|^{}_{\infty} \ts .
\end{align*}
Taking $N\rightarrow \infty$, it follows from Theorem~\ref{thm:Bap} that
\[\widehat{\eta^{}_{f}} (\{x\}) \, = \, \lim_{N\to \infty} \widehat{\eta^{}_{P^{}_N}} (\{x\}). \] 
Finally, observe that when $x \neq a^{}_m$ for any $m$, we have $\widehat{\eta^{}_{P^{}_N}}(\{x\}) \,= \,0$
for all $N$, so $\widehat{\eta^{}_{f}} (\{x\}) \, = \, 0$.  On the other hand, if $x = a^{}_m$, we have $\widehat{\eta^{}_{P^{}_N}}(\{x\}) \,= \, |c^{}_m|^2$
for all sufficiently large $N$, so $\widehat{\eta^{}_{f}} (\{a^{}_m\}) \, = \, |c^{}_m|^2$.  Therefore, the claim holds.
\end{proof}

\begin{remark}
\label{rem:uniqueness_plane}
    It is worth mentioning that this proof, read backwards, provides an explicit way of constructing functions with a prescribed pure-point diffraction measure, once the phases are specified. Indeed, pick any positive finite pure-point measure $\mu$ on $\RR^d$ (the support can be \emph{any} countable subset of $\RR^d$) and choose \emph{any} phases $\xi^{}_{m}$. Then, there exists a unique function $f\in \Bap^{\, 2}(\RR^d)$ which will always be of the form \eqref{eq:f_expansion} with $a^{}_{m}$ being points from the support and $c^{}_{m} = \xi^{}_m \sqrt{\mu(\{a^{}_{m}\})}$. Uniqueness follows from \cite[Cor.~3.20]{LSS2020}. \exend
\end{remark}
It may seem natural to weaken the assumption that $f \in \Bap^2(\RR^d)$; however, attempting to do so for $f\in BL^{2}(\RR^d)$ leads to subtle and technical challenges \cite{LS2025}. 

\section{Spherical waves and radially concentrated diffraction}\label{sec:spherical}
One of the main results of Theorem~\ref{thm:Besicovitch-diffraction} was that the natural autocorrelation of Besicovitch $2$-almost periodic functions exist, and that the corresponding diffraction is pure point. In this section, we will prove an analogous result for spherical waves. To do so, we develop a broader framework for working with radially symmetrical measures. 
Throughout, we restrict ourselves to the standard van Hove sequence of closed balls $\cA = (B^{}_R)^{}_{R > 0}$. This is the assumed van Hove sequence for this section, unless stated otherwise.

Let us begin by introducing the notion of a \emph{radially concentrated measure}, which provides a radial counterpart to pure point measures. 

\begin{definition}
We say that a measure $\mu$ is \emph{radially concentrated} if $\mu$ is concentrated on $\bigcup_{r\in A}\Sph^{d-1}_r$ for some countable set $A\subseteq [0,\infty)$.
\end{definition}

\begin{remark}\label{rem:ps-sum}
A measure $\mu$ is  \emph{radially concentrated}  if and only if 
\[
\mu \, =\,  \sum_{r\in A} \mu_r\ts ,
\]
for some countable set $A\subseteq [0,\infty)$, where $\mu_r$ is concentrated on $\Sph^{d-1}_r$ for each ${r\in A}$. Moreover, if $\mu$ is non-zero, then this decomposition can be written uniquely with each $\mu^{}_r = \mu\bigl|^{}_{\Sph^{d-1}_r}$ non-zero. \exend
\end{remark}

\begin{example}
Consider a lattice Dirac comb $\delta^{}_{\ZZ^d}$. This is a pure-point measure. It is also a radially concentrated measure with the countable set $A$ being a subset of $\{ \sqrt{k} \ : \ k\in \NN^{}_{0} \}$ and with $\mu^{}_{k}$ being a~sum of $r^{}_{d}(k)$ Dirac measures, where $r^{}_{d}(k)$ is the number of ways of writing $k$ as a sum of $d$ squares. 
This example shows that 
\[ \mbox{radially concentrated} \ \notimplies \ \mbox{radially symmetric.} \]

Of course, we also have that radially symmetric need not imply radially concentrated; indeed, \emph{any} nonzero measure $\mu = f \lambda$ with radially symmetric density function $f$, even with $f = 1$, is radially symmetric with $|\mu|(\Sph^{d-1}_r) =0$ for all $r$. \exend
\end{example}

Similarly to how a measure has a unique decomposition as the sum of a pure-point measure and a~continuous measure, we can decompose a measure uniquely as the sum of a radially concentrated measure and what we will call a \emph{radially dispersed measure}.

\begin{definition}
    We say that a measure $\mu$ is \emph{radially dispersed} if $|\mu|(\Sph^{d-1}_r) = 0$ for all $r \in [0,\infty)$.
\end{definition}

\begin{remark}\label{rem:ps-subspace}
    One can easily show that the radially concentrated measures and the radially dispersed measures both form linear subspaces of the space of measures. Moreover, the only measure which is both radially concentrated and radially dispersed is $\mu=0$. To see this, let $\mu = \sum_{r\in A} \mu_r$ be a radially concentrated measure with each $\mu_r$  concentrated on $\Sph^{d-1}_r$.  If $\mu$ is also radially dispersed, then we have
    \[
    |\mu|(\Sph^{d-1}_r) \, =\,  0 \,  \Longrightarrow \, |\mu_r|(\Sph^{d-1}_r) \, =\,  0\,  \Longrightarrow \, \mu_r = 0 \ts ,
    \]
    for each $r \in A$, so $\mu = 0$. \exend
\end{remark}

\begin{proposition}\label{prop:ps-decomposition}
Let $\mu$ be a measure. Then $\mu$ has a unique decomposition as 
\[
\mu \,= \, \mu^{}_{\rc}+\mu^{}_{\rd} \ts ,
\]
where $\mu^{}_{\rc}$ is a radially concentrated measure and $\mu^{}_{\rd}$ is a radially dispersed measure.
\end{proposition}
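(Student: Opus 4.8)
The plan is to imitate the classical Lebesgue decomposition of a measure into its pure-point and continuous parts, with the spheres $\Sph^{d-1}_r$ now playing the role that points play in the classical setting. The whole argument turns on the fact that, for a fixed measure $\mu$, at most countably many spheres can carry positive mass; granting this, existence follows by restricting $\mu$ to the union of the ``heavy'' spheres, and uniqueness follows formally from the subspace structure already recorded in Remark~\ref{rem:ps-subspace}.

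First I would introduce the set of heavy radii
\[
    A \, = \, \bigl\{ r \in [0,\infty) \,:\, |\mu|(\Sph^{d-1}_r) > 0 \bigr\}
\]
and prove that it is countable. For $k,n \in \NN$, set $A^{}_{k,n} = \{ r \in [0,k] : |\mu|(\Sph^{d-1}_r) > 1/n \}$. Since the spheres $\Sph^{d-1}_r$ with $r \leqslant k$ are pairwise disjoint subsets of the ball $B^{}_k$, and $|\mu|$ is finite on the compact set $B^{}_k$, we have $\sum_{r\in A^{}_{k,n}} |\mu|(\Sph^{d-1}_r) \leqslant |\mu|(B^{}_k) < \infty$; as each summand exceeds $1/n$, the index set $A^{}_{k,n}$ must be finite. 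Hence $A = \bigcup_{k,n\in\NN} A^{}_{k,n}$ is a countable union of finite sets, so $A$ is countable. This counting step --- resting only on the local finiteness of $\mu$ and the disjointness of distinct spheres --- is the technical heart of the argument, and is where essentially all the content lies.

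With $A$ fixed, let $S = \bigcup_{r\in A} \Sph^{d-1}_r$, which is a Borel set, and define
\[
    \mu^{}_{\rc} \, = \, \mu\bigl|^{}_{S} \qquad \text{and} \qquad \mu^{}_{\rd} \, = \, \mu\bigl|^{}_{\RR^d \setminus S} \, = \, \mu - \mu^{}_{\rc} \ts .
\]
By construction $\mu^{}_{\rc}$ is concentrated on the countable union of spheres $S$, hence radially concentrated. To check that $\mu^{}_{\rd}$ is radially dispersed, I would use that the variation of a restriction is the restriction of the variation, so that $|\mu^{}_{\rd}|(\Sph^{d-1}_r) = |\mu|(\Sph^{d-1}_r \cap (\RR^d\setminus S))$ for every $r$. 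This vanishes in both cases: if $r \in A$ then $\Sph^{d-1}_r \subseteq S$ and the intersection is empty, while if $r \notin A$ then already $|\mu|(\Sph^{d-1}_r) = 0$. Thus $\mu = \mu^{}_{\rc} + \mu^{}_{\rd}$ is a decomposition of the required form.

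Finally, for uniqueness, suppose $\mu = \nu^{}_1 + \sigma^{}_1 = \nu^{}_2 + \sigma^{}_2$ with each $\nu^{}_i$ radially concentrated and each $\sigma^{}_i$ radially dispersed. Rearranging gives $\nu^{}_1 - \nu^{}_2 = \sigma^{}_2 - \sigma^{}_1$. By Remark~\ref{rem:ps-subspace}, the radially concentrated measures and the radially dispersed measures each form a linear subspace, so the left-hand side is radially concentrated while the right-hand side is radially dispersed. Being equal, this common measure is simultaneously radially concentrated and radially dispersed, hence zero by the same remark. Therefore $\nu^{}_1 = \nu^{}_2$ and $\sigma^{}_1 = \sigma^{}_2$, which establishes uniqueness.
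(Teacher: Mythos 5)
Your proof is correct and follows essentially the same route as the paper: countability of the set of radii carrying positive mass, restriction of $\mu$ to the union of those spheres, and uniqueness via the subspace structure of Remark~\ref{rem:ps-subspace}. The only cosmetic difference is that you work directly with the variation $|\mu|$ of a complex measure, whereas the paper first reduces to positive measures via the remark; both are fine.
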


\begin{proof}
    By Remark~\ref{rem:ps-subspace}, it suffices to show that every positive measure can be written as the sum of a~positive radially concentrated measure and a positive radially dispersed measure. 

    To this end, let $\mu$ be a positive measure.
    Define $A \, \defeq \, \{r\geqslant  0\,:\,\mu(\Sph^{d-1}_r)>0\}$.
    Then $A$ must be countable. Indeed, we have
    \[
        A \,=\, \bigcup_{R>0}\bigcup_{m \in \NN} \left\{r \in [0,R) \,:\, \mu(\Sph^{d-1}_r) \geqslant \tfrac{1}{m}\right\} \ts , 
    \]
    and for any given $R$ and $m$, we have $\mu(B_R) < \infty$, which implies that each of the sets in the above union is finite.

    Let $B = \bigcup_{r\in A} \Sph^{d-1}_r$. Now, we define the radially concentrated measure
    \[
    \mu^{}_{\rc} \, \defeq \, \mu\bigl|^{}_B = \sum_{r\in A} \mu\bigl|^{}_{\Sph^{d-1}_r} \ts .
    \]
    Then 
    $\mu^{}_{\rd} \, \defeq \,  \mu - \mu^{}_{\rc} = \mu\bigl|^{}_{\RR^d\backslash B}
    $
    is radially dispersed by construction. 
    Since both $\mu^{}_{\rc}$ and $\mu^{}_{\rd}$ are positive, this completes the proof.
\end{proof}

\begin{remark}
Proposition~\ref{prop:ps-decomposition} shows that we have
\[
    \cM(\RR^d) \, = \, \cM^{}_{\rc}(\RR^d) \oplus \cM^{}_{\rd}(\RR^d)\,,
\] 
where $\cM(\RR^d)$ is the space of complex Radon measures, $\cM^{}_{\rc}(\RR^d)$ is the set of the radially concentrated measures, and $\cM^{}_{\rd}(\RR^d)$ is the set of radially dispersed measures. Furthermore, viewing $\cM(\RR^d)$ as a~vector lattice, we get that $\cM^{}_{\rc}(\RR^d)$ is a projection band in $\cM(\RR^d)$ with disjoint complement $\cM^{}_{\rd}(\RR^d)$. \exend
\end{remark}

\begin{remark}\label{rem:sph-singular}
    For any countable set $A\subseteq[0,\infty)$, we have $\lambda\bigl(\bigcup^{}_{r\in A} \Sph^{d-1}_r\bigr) = 0$; hence any radially concentrated measure is singular. Moreover, any pure-point measure is concentrated on a countable union of spheres. Thus, the following implications hold:
    \begin{enumerate}[label=(\alph*)]
        \item $\mu \ \text{pure point} \ \implies \mu \ \text{radially concentrated} \ \implies \mu \ \text{singular}$, and
        \item $\mu \ \text{absolutely continuous} \ \implies \mu \ \text{radially dispersed} \ \implies \mu \ \text{continuous}$.
    \end{enumerate} \exend
\end{remark}

We now consider the simplest and most representative form of radially concentrated diffraction. We will show that the diffraction of a spherical wave consists of a single sphere. We have already seen in Eq.~\eqref{eq:sphere-Bessel} a function whose Fourier transform is the uniform probability measure $\theta_r$ concentrated on the sphere of radius~$r$. In particular, to show that a given measure on $\RR^2$ has diffraction $\theta_r$, it suffices to show that the autocorrelation is the function $J_0(2\pi r \|x \|)$; see Thm.~4.9.28 and Thm.~4.11.5 in \cite{MS2017} for details regarding Fourier inversion of measures. 

For instructive reasons, let us begin with a radial wave in one dimension, described via $f^{}_a(x) = \ee^{2 \pi \ii a | x|}$ with
$a\ne 0$. When $a=0$, one has $\eta (x) \equiv 1$ and
$\widehat{\eta} = \delta^{}_{0}$, in line with
what we already know for plane waves. Thus, assume $a\ne 0$ and set $a=1$. Then, the autocorrelation is given by
\[
    \eta(x) \, =\,  \lim_{L\to\infty} \myfrac{I^{}_{L}(x)}{2L} \quad \mbox{with} \quad  I^{}_{L} (x) \, = \int_{-L}^{L} \ee^{2 \pi \ii | y |} 
    \ee^{- 2 \pi \ii |y-x|} \dd y  \ts .
\]
\noindent An easy computation with case distinctions yields
\[
   I^{}_{L} (x) \, = \, (L-| x |) \ee^{2 \pi \ii | x |}
   + L \ts \ee^{-2 \pi \ii | x |} + 
   \myfrac{\ee^{2 \pi \ii x} - \ee^{-2 \pi \ii x}}{4 \pi \ii} \ts ,
\]
which (including the case $x=0$) means
\[
     \myfrac{I^{}_{L} (x)}{2 L} \, = \, \cos(2 \pi x)
     + \mathcal{O} (L^{-1}) \qquad \text{as $L \to \infty$} \ts ,
\]
and thus $\eta (x) = \cos(2 \pi x)$. 

When $a\ne 1$, a simple scaling argument gives
$\eta(x) = \cos(2 \pi a x)$. Similarly,  $\lb f^{}_a, f^{}_b \rb = 0$ whenever $a\neq b$. We have
thus shown the following.

\begin{proposition}\label{lem:sphere-1}
The natural autocorrelation of the spherical wave in one dimension\/ $f^{}_a(x) = \ee^{2 \pi \ii a | x |}$ with\/
$a\geqslant  0$, exists and is given by\/
$\eta (x) = \cos (2 \pi a x)$. The corresponding diffraction is the
discrete probability measure\/
$\widehat{\eta} = \frac{1}{2} (\delta^{}_{a} +
\delta^{}_{-a})$.  Moreover, $\lb f^{}_a, f_{b} \rb \, = \, 0$ whenever $a\neq b$. \qed
\end{proposition}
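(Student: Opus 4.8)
The plan is to reduce the whole statement to one explicit one-dimensional integral together with a scaling argument. Since $|f^{}_a(x)| = 1$ everywhere, we have $f^{}_a \in L^\infty(\RR)$, so the reflected Eberlein convolution $\lb f^{}_a, f^{}_b\rb$ is well defined in the sense of Definition~\ref{def:reflected}(a), and by Proposition~\ref{prop:bounded-conv} it coincides with the natural autocorrelation along the van Hove sequence of intervals $[-L,L]$ (here $\vol(B^{}_L) = 2L$). I would first compute the autocorrelation (the case $a = b$), then the cross term (the case $a \neq b$), and finally read off the diffraction.

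For the autocorrelation I would set $a = 1$ without loss of generality: the substitution $y \mapsto y/a$ in the defining integral gives $\eta^{}_{f^{}_a}(x) = \eta^{}_{f^{}_1}(ax)$ for $a>0$, so it suffices to treat $f^{}_1$ and rescale at the end (the degenerate case $a = 0$ gives $\eta \equiv 1 = \cos 0$ directly). Fixing $x > 0$ and taking $L > x$, the point is that $|y|$ and $|y-x|$ are affine on each of the three regions $y < 0$, $0 \leqslant y \leqslant x$, and $y > x$ determined by the signs of $y$ and $y-x$; on each region the integrand collapses to a single exponential. The two outer regions have integrands that are constant in $y$, and therefore contribute terms growing linearly in $L$, namely $(L - |x|)\ee^{2\pi\ii|x|}$ and $L\ee^{-2\pi\ii|x|}$; the middle region over $[0,x]$ is a bounded integral independent of $L$, producing the $O(1)$ remainder. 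Dividing the total by $2L$ and sending $L \to \infty$ annihilates the bounded remainder and leaves $\tfrac12\bigl(\ee^{2\pi\ii|x|} + \ee^{-2\pi\ii|x|}\bigr) = \cos(2\pi|x|)$. The case $x < 0$ is identical with the roles of the outer regions interchanged and again yields a function of $|x|$; since cosine is even, $\eta^{}_{f^{}_a}(x) = \cos(2\pi a x)$ after rescaling.

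For the cross term with $a \neq b$ I would run the same case split in the integral defining $\lb f^{}_a, f^{}_b\rb(x)$. The crucial difference is that the exponents in the two outer regions no longer cancel: they carry oscillatory factors $\ee^{2\pi\ii(b-a)y}$ and $\ee^{2\pi\ii(a-b)y}$, whose integrals over $[-L,0]$ and $[x,L]$ remain bounded uniformly in $L$ because the denominator $2\pi\ii(a-b)$ is nonzero. The middle region is again a bounded integral. Hence every contribution is $O(1)$, and dividing by $2L$ forces $\lb f^{}_a, f^{}_b\rb = 0$. Finally, the diffraction follows at once from the autocorrelation: writing $\eta^{}_{f^{}_a}(x) = \tfrac12\bigl(\ee^{2\pi\ii a x} + \ee^{-2\pi\ii a x}\bigr)$ and transforming termwise (with $\widehat{\ee^{2\pi\ii a\,\cdot}} = \delta^{}_{a}$ in the convention of Section~\ref{sec:preliminaries_and_background}, exactly as in Remark~\ref{rem:plane}) yields $\widehat{\eta} = \tfrac12\bigl(\delta^{}_{a} + \delta^{}_{-a}\bigr)$, a measure of total mass one and hence a probability measure.

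The only real obstacle here is bookkeeping rather than analysis: one must keep careful track of which of $y$ and $y-x$ is negative on each region, ensure $L$ is large enough that $[-L,L]$ genuinely meets all three regimes, and confirm that the limit in $L$ exists on the nose, not merely along a subsequence. Boundedness of $f^{}_a$ removes every analytic difficulty, since it places us in the $L^\infty$ setting where the autocorrelation and the reflected Eberlein convolution agree and the defining limit is the pointwise one.
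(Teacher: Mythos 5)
Your proposal is correct and follows essentially the same route as the paper: reduce to $a=1$ by a scaling argument (treating $a=0$ separately), compute the averaging integral explicitly via case distinctions on the regions where $|y|$ and $|y-x|$ are affine --- producing the two linearly growing outer terms plus a bounded remainder --- and obtain the diffraction by transforming the resulting cosine termwise. The only difference is level of detail: the paper compresses the case analysis into ``an easy computation'' and dispatches the cross term $a\neq b$ with ``similarly,'' whereas you spell out both, including the uniformly bounded oscillatory integrals that make the cross term vanish.
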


\begin{remark}
  It is interesting to compare Proposition~\ref{lem:sphere-1} with
  Remark~\ref{rem:plane}. There, we obtained the autocorrelation
  $\eta (x) = \cos (2 \pi x)$ for the (weighted) superposition of two
  plane waves,
  $\frac{1}{\sqrt{2}}\bigl(\ee^{2 \pi \ii x} + \ee^{-2 \pi \ii
    x}\bigr)$, which is thus homometric with the spherical wave
  $\ee^{2 \pi \ii | x |}$. While this looks surprising at
  first sight, it can be understood via recent results on the diffraction of almost characters \cite{LS2025}. 
  We emphasise that this is a~purely one-dimensional phenomenon, and is no longer true in higher dimensions. \exend
\end{remark}

Next, we consider radial waves in two dimensions, described via $f^{}_a(x) = \ee^{2\pi \ii a \|x\|}$ with $a \neq 0$.  Let us compute the reflected Eberlein convolution for two such functions $f^{}_a, f^{}_b$. Since $f^{}_a$ and $f^{}_b$ are both radial functions, we have that $\lb f^{}_a, f^{}_b \rb$ is also a radial function. Hence, it suffices to consider $\lb f^{}_a, f^{}_b \rb$ evaluated at points $(s,0) \in \RR^2$ for $s \geqslant  0$.  We have
\[
\lb f^{}_a, f^{}_b \rb(s,0) \, = \,  \lim_{R\rightarrow \infty} \myfrac{1}{\pi R^2} \int_{B^{}_R} f^{}_a(x) \overline{f^{}_b(x-(s,0))}\,\dd x \ts .
\]
Converting to polar coordinates and simplifying the integrand, we get 
\[
\lb f^{}_a, f^{}_b \rb(s,0) \, =\,  \lim_{R\rightarrow \infty} \myfrac{1}{\pi R^2}  \int_{-\pi}^\pi \int_0^R \ee^{\ii ar} \ee^{-\ii b\sqrt{r^2 - 2rs\cos\theta + s^2}\,}\ts r \,\dd r \,\dd \theta\ts .
\]

\noindent It is easy to see that
\[
\sqrt{r^2 - 2rs\cos\theta + s^2} 
\, =\,  r\sqrt{1-\tfrac{2s\cos\theta}{r}+\tfrac{s^2}{r^2}} \, \sim \,  r-s\cos\theta  \qquad \mbox{as} \ r \longrightarrow \infty\ts ,
\]
geometrically, or via the Taylor expansion of the function $\sqrt{1-x}$ around zero. Then, using continuity of the exponential function, one can show the following.

\begin{fact}\label{lem:approx}
Let $a\neq 0$. Then, as 
\[\ee^{-\ii a\sqrt{r^2 - 2rs\cos\theta + s^2}} \sim \ee^{- \ii a(r-s\cos\theta)} 
\]
as $r \to \infty$. \exend
\end{fact}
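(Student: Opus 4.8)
The plan is to read the symbol $\sim$ as the assertion that the ratio of the two sides converges to $1$ as $r\to\infty$. Since both exponentials have modulus one, this ratio equals $\ee^{-\ii a \Delta(r)}$ with
\[
\Delta(r) \,=\, \sqrt{r^2 - 2rs\cos\theta + s^2} - (r - s\cos\theta)\ts,
\]
so by continuity of the exponential the entire claim reduces to showing that $\Delta(r)\to 0$.

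To control $\Delta(r)$ I would rationalise rather than Taylor expand: multiplying by the conjugate sum, the numerator collapses to the $r$-independent constant $s^2\sin^2\theta$, because $(r^2 - 2rs\cos\theta + s^2) - (r-s\cos\theta)^2 = s^2\sin^2\theta$. This yields
\[
\Delta(r) \,=\, \myfrac{s^2\sin^2\theta}{\sqrt{r^2 - 2rs\cos\theta + s^2} + (r - s\cos\theta)}\ts,
\]
and for $r>s$ each summand in the denominator is at least $r-s$, so $|\Delta(r)| \leqslant \frac{s^2}{2(r-s)} \to 0$. Continuity of $\ee^{-\ii a(\cdot)}$ at $0$ then closes the argument.

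The step I expect to be the genuine obstacle is conceptual rather than computational: one must recognise that the equivalence of phases recorded just before the Fact, namely $\sqrt{r^2 - 2rs\cos\theta + s^2}\sim r - s\cos\theta$, does \emph{not} by itself yield the equivalence of the exponentials. For oscillatory factors $\ee^{\ii(\cdot)}$, two phases whose ratio tends to $1$ can still differ by a nonzero additive constant, producing exponentials that differ by a fixed phase. The correct sufficient condition is that the phase \emph{difference} vanishes, which is exactly what the rationalisation exhibits. As a bonus, since $s^2\sin^2\theta\leqslant s^2$ while the denominator is bounded below by $2(r-s)$ independently of $\theta$, the estimate $|\Delta(r)|\leqslant\frac{s^2}{2(r-s)}$ is uniform in $\theta\in[-\pi,\pi]$; I would emphasise this uniformity, since it is precisely what legitimises replacing the exponential inside the angular integral defining $\lb f^{}_a, f^{}_b\rb(s,0)$.
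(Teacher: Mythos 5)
Your proof is correct, and it reaches the conclusion by a genuinely different (and sharper) route than the paper. The paper derives the Fact from the displayed relation $\sqrt{r^2-2rs\cos\theta+s^2}\sim r-s\cos\theta$, obtained geometrically or from the Taylor expansion of $\sqrt{1-x}$ around zero, and then invokes continuity of the exponential. Your conceptual point is well taken: read literally, asymptotic equivalence of the phases (ratio tending to $1$) does not imply asymptotic equivalence of the unimodular exponentials, since the phase difference can stay bounded away from zero, or even diverge, while the ratio tends to $1$; what continuity of $\ee^{-\ii a(\cdot)}$ actually needs is that the \emph{difference} $\Delta(r)$ vanishes. The paper's Taylor expansion does implicitly deliver this stronger statement, since it gives $\sqrt{r^2-2rs\cos\theta+s^2}=r-s\cos\theta+\mathcal{O}(1/r)$, but only the weaker $\sim$ relation is recorded in the display. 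Your rationalisation
\[
\Delta(r)\,=\,\myfrac{s^2\sin^2\theta}{\sqrt{r^2-2rs\cos\theta+s^2}+(r-s\cos\theta)}\ts,
\]
replaces the asymptotic expansion by an exact identity and yields the bound $|\Delta(r)|\leqslant \frac{s^2}{2(r-s)}$ for $r>s$ (the reverse triangle inequality gives $\sqrt{r^2-2rs\cos\theta+s^2}\geqslant r-s$, and $r-s\cos\theta\geqslant r-s>0$). The added value is the explicit uniformity in $\theta\in[-\pi,\pi]$, which is precisely what legitimises replacing the exponent under the angular integral in Lemma~\ref{lem:2d-sphere-integral} --- a step the paper only sketches as ``standard asymptotic arguments'' with a reference to \cite{Kor2026}. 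The Taylor route gives the same $\mathcal{O}(1/r)$ decay, but requires an additional remainder estimate to obtain that uniformity, whereas your exact formula gives it for free.
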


\noindent This fact allows us to replace $\sqrt{r^2 - 2rs\cos\theta + s^2} $ by $r - s\cos\theta$ in the expression for $\lb f^{}_a, f^{}_b \rb$.

\begin{lemma}
\label{lem:2d-sphere-integral} 
Let $a,b \neq 0$ and $s \geqslant 0$.  Then
\[
\lb f^{}_a, f^{}_b \rb \ts (s,0) \,=\, \lim_{R \rightarrow \infty} \myfrac{1}{\pi R^2}\int_{-\pi}^\pi\int_{0}^R \ee^{\ii ar} \ee^{-\ii b(r-s\cos\theta)} \ts r \,\dd r \,\dd\theta\ts .
\] 
\end{lemma}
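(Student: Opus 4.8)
The plan is to prove the \emph{unconditional} statement
\[
\lim_{R\to\infty}\myfrac{1}{\pi R^2}\int_{-\pi}^\pi\int_0^R \ee^{\ii ar}\Bigl(\ee^{-\ii b\sqrt{r^2-2rs\cos\theta+s^2}}-\ee^{-\ii b(r-s\cos\theta)}\Bigr)\ts r\,\dd r\,\dd\theta\,=\,0\ts,
\]
from which the lemma follows immediately: the quantity $\lb f^{}_a,f^{}_b\rb(s,0)$ is by definition the limit of the normalised integral carrying the exact distance, and the display shows that this differs from the normalised integral carrying $r-s\cos\theta$ by a null quantity. Hence one limit exists if and only if the other does, and the two agree, which is precisely the asserted equality.

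The first step is to factor out the common phase and write the bracketed difference as $\ee^{-\ii b(r-s\cos\theta)}\bigl(\ee^{-\ii b\ts g(r,\theta)}-1\bigr)$, where the correction term is
\[
g(r,\theta)\,\defeq\,\sqrt{r^2-2rs\cos\theta+s^2}-(r-s\cos\theta)\,\geqslant\,0\ts.
\]
Using $|\ee^{\ii t}-1|\leqslant|t|$ together with $|\ee^{\ii ar}|=1$, the modulus of the entire integrand is bounded by $|b|\,g(r,\theta)$, so it suffices to control $\frac{1}{\pi R^2}\int_{-\pi}^\pi\int_0^R |b|\,g(r,\theta)\ts r\,\dd r\,\dd\theta$. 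The decisive estimate on $g$ is obtained by rationalising, which gives the quantitative form of Fact~\ref{lem:approx}:
\[
g(r,\theta)\,=\,\myfrac{s^2\sin^2\theta}{\sqrt{r^2-2rs\cos\theta+s^2}+(r-s\cos\theta)}\ts.
\]
For $r\geqslant 2s$ the denominator is at least $r-s\cos\theta\geqslant r-s\geqslant r/2$, which yields $g(r,\theta)\leqslant 2s^2/r$; for $0\leqslant r< 2s$ the triangle inequality $\sqrt{r^2-2rs\cos\theta+s^2}\leqslant r+s$ gives the crude uniform bound $g(r,\theta)\leqslant 2s$.

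With these two bounds I would split the radial integral at $r=2s$ (taking $R>2s$). On $[0,2s]$ the contribution is bounded by a constant of order $s^3$, independent of $R$, and therefore is $O(R^{-2})$ after dividing by $\pi R^2$. On $[2s,R]$ the bound $g\leqslant 2s^2/r$ exactly cancels the polar Jacobian factor $r$, so that $\int_{2s}^R g\ts r\,\dd r\leqslant 2s^2(R-2s)$; integrating $\sin^2\theta$ over $[-\pi,\pi]$ and dividing by $\pi R^2$ leaves a term of order $R^{-1}$. Both pieces vanish as $R\to\infty$, establishing the displayed limit and hence the lemma.

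The only delicate point, and the step I expect to be the main obstacle, is the small-$r$ region: there the linear approximation $\sqrt{r^2-2rs\cos\theta+s^2}\approx r-s\cos\theta$ genuinely fails, since $g$ does not decay and reaches order $s$ near $r=0$, $\theta=0$, so the pointwise asymptotic of Fact~\ref{lem:approx} offers no uniform control. The saving observation is that this bad set is a \emph{fixed} bounded annulus whose measure does not grow with $R$, so after normalising by the ball area $\pi R^2$ its contribution is negligible; all the genuine work lies in the $1/r$ decay of $g$ at large $r$, which the rationalised expression supplies directly.
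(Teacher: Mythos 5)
Your proof is correct. A direct comparison with the paper is only partially possible, because the paper gives no argument at all for this lemma: its ``proof'' consists of the remark that the result follows by standard asymptotic arguments, with the details deferred to \cite{Kor2026}. Your proposal supplies precisely the content that this deferral hides. The decisive ingredient is the rationalised identity
\[
\sqrt{r^2-2rs\cos\theta+s^2}-(r-s\cos\theta)\,=\,\frac{s^2\sin^2\theta}{\sqrt{r^2-2rs\cos\theta+s^2}+(r-s\cos\theta)}\ts,
\]
which upgrades the merely pointwise asymptotic of Fact~\ref{lem:approx} to the quantitative bound $g(r,\theta)\leqslant 2s^2/r$ for $r\geqslant 2s$ (where the denominator is at least $r-s\geqslant r/2>0$, so no vanishing-denominator issue arises), while the triangle inequality gives the uniform bound $g\leqslant 2s$ on the fixed bounded region $r<2s$, where --- as you rightly stress --- the linear approximation genuinely fails and only the $1/(\pi R^2)$ normalisation saves the contribution. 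Combined with $|\ee^{\ii t}-1|\leqslant |t|$ and $|\ee^{\ii a r}|=|\ee^{-\ii b(r-s\cos\theta)}|=1$, this yields an error of order $O(R^{-1})+O(R^{-2})$, so the two normalised integrals differ by a null quantity; hence one limit exists if and only if the other does, and they coincide. That unconditional formulation is also the right way to read the lemma, since the existence of the limit is only established afterwards, in Theorem~\ref{thm:main}, by computing the right-hand side explicitly. A further small bonus of your estimates is that the error bound $|b|\bigl(8s^3/R^2+2s^2/R\bigr)$ is locally uniform in $s$, which is slightly more than the lemma asserts.
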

\begin{proof}
This result can be shown by standard asymptotic arguments, and a proof using analytic arguments can be found in \cite{Kor2026}.
\end{proof}

We can now show that the Bessel function of the first kind gives the autocorrelation of a spherical wave in two dimensions. 
\begin{theorem}\label{thm:main}
The natural autocorrelation of \/ $f^{}_a(x) = \ee^{2 \pi \ii a \| x \|}$ with\/
$a \neq 0$, the spherical wave in two dimensions, exists and is given by
\[
\eta^{}_{f^{}_{a}} (x) = J_0\bigl(2\pi a\|x\|\bigr)\ts .
\]
The corresponding diffraction is $\widehat{\,\eta^{}_{f^{}_{a}}} = \theta^{}_{|a|}$, the uniform probability measure on the sphere of radius $|a|$ in two dimensions.  Moreover, we have $\lb f^{}_a, f^{}_b \rb = 0$ whenever $a\neq b$.
\end{theorem}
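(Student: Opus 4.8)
The plan is to compute the autocorrelation directly from the integral representation established in Lemma~\ref{lem:2d-sphere-integral} and then invoke the Bessel integral formula \eqref{eq:Bessel-integral}. Starting from
\[
\lb f^{}_a, f^{}_b \rb (s,0) \, = \, \lim_{R\to\infty} \myfrac{1}{\pi R^2} \int_{-\pi}^{\pi} \int_0^R \ee^{\ii ar}\ee^{-\ii b(r-s\cos\theta)} \, r \,\dd r \,\dd\theta \ts ,
\]
I would first separate the variables: the integrand factors as $\ee^{\ii (a-b)r}\, r$ times $\ee^{\ii bs\cos\theta}$, so the $\theta$-integral and the $r$-integral decouple. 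The $\theta$-integral $\int_{-\pi}^{\pi} \ee^{\ii bs\cos\theta}\,\dd\theta$ is, by \eqref{eq:Bessel-integral} with $\nu = 0$ (where $\vG(\tfrac12)=\sqrt\pi$ and $\sin^0\theta = 1$), exactly $2\pi J_0(bs)$. This is the key identity that produces the Bessel function.

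Next I would handle the radial integral and the normalisation. When $a \neq b$, the factor $\int_0^R \ee^{\ii(a-b)r}\, r \,\dd r$ grows only like $R/|a-b|$ (by integration by parts, the oscillatory integral is $O(R)$ rather than $O(R^2)$), so dividing by $\pi R^2$ and letting $R\to\infty$ gives $0$; this yields the orthogonality claim $\lb f^{}_a, f^{}_b \rb = 0$ for $a\neq b$. When $a=b$, the radial factor becomes $\int_0^R r\,\dd r = R^2/2$, and then
\[
\lb f^{}_a, f^{}_a \rb (s,0) \, = \, \lim_{R\to\infty} \myfrac{1}{\pi R^2} \cdot \myfrac{R^2}{2} \cdot 2\pi J_0(as) \, = \, J_0(as) \ts .
\]
Restoring the $2\pi$ factor from the original exponent $\ee^{2\pi\ii a\|x\|}$ (so that $b$ carries a $2\pi$), and using that $\eta$ is radial so $\eta(x)$ depends only on $\|x\|=s$, gives $\eta^{}_{f^{}_a}(x) = J_0(2\pi a\|x\|)$, and the sign ambiguity $J_0(2\pi a\|x\|)=J_0(2\pi|a|\|x\|)$ follows since $J_0$ is even.

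For the diffraction, I would compare $\eta^{}_{f^{}_a}(x) = J_0(2\pi a\|x\|)$ with the Fourier transform formula \eqref{eq:sphere-Bessel} specialised to $d=2$, which reads $\widehat{\theta^{}_r}(k) = J_0(2\pi r\|k\|)$ (since $\vG(1)=1$ and the exponent $\tfrac{d}{2}-1$ vanishes). Thus $\eta^{}_{f^{}_a} = \widehat{\theta^{}_{|a|}}$, and by Fourier inversion for measures (as cited, Thm.~4.9.28 and Thm.~4.11.5 in \cite{MS2017}) we conclude $\widehat{\eta^{}_{f^{}_a}} = \theta^{}_{|a|}$, the uniform probability measure on the circle of radius $|a|$. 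The main obstacle I anticipate is making the interchange of limit and integration fully rigorous: the asymptotic replacement of $\sqrt{r^2-2rs\cos\theta+s^2}$ by $r-s\cos\theta$ (Fact~\ref{lem:approx}) must be controlled uniformly enough that the error term contributes nothing after dividing by $R^2$ and taking $R\to\infty$. Since the paper already isolates this replacement in Lemma~\ref{lem:2d-sphere-integral} (deferring the analytic details to \cite{Kor2026}), the remaining computation is the clean decoupling and Bessel-identity step sketched above.
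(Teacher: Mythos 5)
Your proposal is correct and follows essentially the same route as the paper's own proof: rotational invariance plus Lemma~\ref{lem:2d-sphere-integral}, decoupling the angular integral (which yields $2\pi J_0$ via the representation~\eqref{eq:Bessel-integral} with $\nu=0$) from the radial one, the $R^2/2$ versus $O(R)$ dichotomy for $a=b$ versus $a\neq b$, and evenness of $J_0$ together with Fourier inversion of measures to identify the diffraction as $\theta^{}_{|a|}$. The only cosmetic difference is that you spell out the inversion step $\eta^{}_{f^{}_a}=\widehat{\theta^{}_{|a|}}$ explicitly, which the paper compresses into one sentence.
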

\begin{proof}
Let $x \in \RR^2$ and let $s = \|x\|$. By rotational invariance of $\eta^{}_{f}$, Lemma~\ref{lem:2d-sphere-integral} and the integral representation of the Bessel function, we have
\[
\lb f^{}_a, f^{}_b \rb(x)\, =\,  \lim_{R\rightarrow \infty} \myfrac{1}{\pi R^2} \int_{-\pi}^\pi \int_0^R \ee^{2 \pi \ii a r}\ee^{-2\pi \ii b(r-s\cos\theta)} r \,\dd r \,\dd \theta \,  = \, \lim_{R\rightarrow \infty} \myfrac{2}{R^2}J_0(2 \pi bs) \int_{0}^R \ee^{2 \pi \ii (a-b)r}r \,\dd r\, .
\]
Hence, when $a=b$ we get $\eta^{}_{f^{}_{a}}(x) =  \lb f^{}_a, f^{}_a \rb = J_0(2\pi a \|x\|)$.
On the other hand, when $a \neq b$, integration by parts gives us the following
\[
\lb f^{}_a, f^{}_b \rb(x) = \lim_{R\rightarrow \infty} \myfrac{2}{R^2} \ts J_0(2 \pi b s) \lt(\myfrac{ \ee^{2 \pi \ii(a-b)R} R}{2 \pi \ii (a-b)} + \myfrac{\ee^{2 \pi \ii (a-b)R}-1}{4 \pi^2 (a-b)^2}\rt) = 0\ts .
\]
Since $J^{}_{0}$ is an even function, after taking its Fourier transform, one obtains $\widehat{\,\eta^{}_{f^{}_{a}}} = \theta^{}_{|a|}$.
\end{proof}

\begin{corollary}
    The diffraction of a function of the form
    \[
    P(x) \, = \, \sum_{k=1}^n c^{}_k \ee^{2\pi \ii a^{}_k \|x\|} \qquad \forall x \in \RR^2\ts ,
    \]
    where $c^{}_k \in \CC$ and $a^{}_k\neq 0$ for each $k\in \{1, \dots, n\}$, is given by
    \[
    \widehat{\ts \eta^{}_{P}} \, = \, \sum_{k=1}^n |c^{}_k|^2\theta^{}_{|a^{}_k|} \ts .
    \]
\end{corollary}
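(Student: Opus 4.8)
The plan is to reduce the statement to the single-wave case already settled in Theorem~\ref{thm:main}, using the sesquilinearity of the reflected Eberlein convolution. Writing $f^{}_{a^{}_k}(x) = \ee^{2\pi\ii a^{}_k\|x\|}$ so that $P = \sum_{k=1}^n c^{}_k f^{}_{a^{}_k}$, I would first record that each $f^{}_{a^{}_k}$ lies in $L^\infty(\RR^2)$ (indeed $|f^{}_{a^{}_k}|\equiv 1$), and that by Theorem~\ref{thm:main} every pairwise convolution $\lb f^{}_{a^{}_k}, f^{}_{a^{}_\ell}\rb$ exists: it equals $J^{}_0\bigl(2\pi a^{}_k\|\cdot\|\bigr)$ when $a^{}_k = a^{}_\ell$ and vanishes when $a^{}_k \neq a^{}_\ell$. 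This existence is precisely the hypothesis needed to invoke Proposition~\ref{prop:linear}.

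Next I would expand $\eta^{}_P = \lb P, P\rb$ using linearity in the first argument and sesquilinearity in the second, which gives
\[
\eta^{}_P \, = \, \lb P, P\rb \, = \, \sum_{k,\ell=1}^n c^{}_k\ts\overline{c^{}_\ell}\, \lb f^{}_{a^{}_k}, f^{}_{a^{}_\ell}\rb \ts .
\]
Assuming (as in the plane-wave lemma) that the frequencies $a^{}_k$ are pairwise distinct, the off-diagonal terms vanish by Theorem~\ref{thm:main}, leaving only the diagonal contributions, so that
\[
\eta^{}_P(x) \, = \, \sum_{k=1}^n |c^{}_k|^2\, J^{}_0\bigl(2\pi a^{}_k\|x\|\bigr) \ts .
\]
Finally, since $\eta^{}_P$ is a finite sum of bounded continuous functions, it defines a tempered distribution (and $\eta^{}_P\lambda$ is a positive definite measure by construction), so I would apply the linearity of the Fourier transform together with the identity $\widehat{J^{}_0(2\pi a^{}_k\|\cdot\|)} = \widehat{\eta^{}_{f^{}_{a^{}_k}}} = \theta^{}_{|a^{}_k|}$ from Theorem~\ref{thm:main} to conclude that $\widehat{\ts\eta^{}_P} = \sum_{k=1}^n |c^{}_k|^2\ts\theta^{}_{|a^{}_k|}$.

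I do not expect a serious obstacle, since the argument runs entirely parallel to the plane-wave computation and rests on the analytic work already carried out in Theorem~\ref{thm:main}. The only point needing care is the bookkeeping of frequencies: the stated formula is correct exactly when the $a^{}_k$ are distinct (so that no genuine cross terms survive). Note that distinctness as real numbers still permits both $a$ and $-a$ to appear; these produce two separate summands hitting the same sphere, which is consistent with the claimed identity since $|a|=|-a|$ and $\lb f^{}_a, f^{}_{-a}\rb = 0$ by Theorem~\ref{thm:main}.
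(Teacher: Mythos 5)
Your proof is correct and follows essentially the same route as the paper: expand $\lb P,P\rb$ via the sesquilinearity of Proposition~\ref{prop:linear}, eliminate the off-diagonal terms using $\lb f^{}_{a^{}_k}, f^{}_{a^{}_\ell}\rb = 0$ from Theorem~\ref{thm:main}, and conclude by linearity of the Fourier transform. Your closing remark on the needed distinctness of the frequencies $a^{}_k$ is a point the paper leaves implicit (hidden in its $\delta(k-\ell)$ notation), and your observation that $a$ and $-a$ may both appear and contribute to the same sphere is consistent with both arguments.
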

\begin{proof}
    By Proposition~\ref{prop:linear} and Theorem~\ref{thm:main}, the natural autocorrelation for $P$ and for all $x\in\RR^2$ satisfies
    \begin{align*}
        \eta^{}_{P}(x) &\,=\,  \lb P,P\rb (x)\,  =\,  \raisebox{-1.5pt}{\scalebox{1.5}{\lb}}\sum_{k=1}^n c^{}_k \ee^{2\pi \ii a^{}_k \|x\|},\sum_{\ell=1}^n c^{}_\ell \ee^{2\pi \ii a^{}_\ell \|x\|} \raisebox{-1.5pt}{\scalebox{1.5}{\rb}} \\
        &\, =\, \sum_{k=1}^n\sum_{\ell=1}^n c^{}_k \overline{c^{}_\ell}\, J^{}_0\bigl(2\pi a^{}_k \|x\|\bigr) \ts \delta(k-\ell) \, = \, \sum_{k=1}^n |c^{}_k|^2 J^{}_0\bigl(2\pi a^{}_k \|x\|\bigr)\ts .
    \end{align*}
    By linearity of the Fourier transform, the claim follows.
\end{proof}

By Theorem \ref{thm:orthogonal}, we have that whenever the reflected Eberlein convolution of translation-bounded measures exists and is nonzero, the corresponding diffractions cannot be mutually singular. Applying this fact in the case where one of the measures is the function $f^{}_a$ with diffraction $\theta^{}_{|a|}$, we get the following sufficient condition for the existence of a circle of radius $a$ in the diffraction of a given translation-bounded measure.
\begin{corollary}
\label{cor:orthogonal}
    Let $a\neq0$ and let $f^{}_a(x) = \ee^{2\pi \ii a \|x\|}$ with $x\in\RR^2$. If $\mu$ is a~translation-bounded measure with diffraction $\widehat{\ts \gamma^{}_{\mu}}$ and  \/$\lb\mu,f^{}_a\lambda\rb \neq 0$, then the measures $\widehat{\ts \gamma^{}_{\mu}}$ and $\theta^{}_{|a|}$ are not mutually singular.  In particular, we have
    \[
        \widehat{\ts \gamma^{}_{\mu}}(\mathbb{S}^1_{|a|}) > 0 \ts , 
    \] 
    i.e., the diffraction measure of $\mu$ has a nontrivial component that is concentrated on the circle of radius $|a|$.  Moreover, if $\gamma_\mu$ is radially symmetric, then there exists some $c > 0$ such that $c \ts\ts \theta^{}_{|a|} \ll \widehat{\ts \gamma^{}_{\mu}}$.\qed

\end{corollary}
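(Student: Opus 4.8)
The plan is to apply the contrapositive of Theorem~\ref{thm:orthogonal} with the comparison measure $\nu = f^{}_a\lambda$, and then to unwind what the resulting non-singularity means geometrically. First I would record the two inputs needed to invoke that theorem. On one hand, $f^{}_a\lambda$ is translation bounded, since $|f^{}_a|\equiv 1$ gives $|f^{}_a\lambda|(t+K) = \vol(K)$ for every compact $K$; and by Theorem~\ref{thm:main} its autocorrelation exists along $\cA = (B^{}_R)^{}_{R>0}$ with $\widehat{\gamma^{}_{f^{}_a\lambda}} = \theta^{}_{|a|}$. On the other hand, $\gamma^{}_\mu$ exists by hypothesis. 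Thus both autocorrelations exist along $\cA$, and Theorem~\ref{thm:orthogonal} asserts that \emph{if} additionally $\widehat{\gamma^{}_\mu}\perp\theta^{}_{|a|}$, \emph{then} $\lb\mu,f^{}_a\lambda\rb$ exists and equals $0$. Since we are given $\lb\mu,f^{}_a\lambda\rb\neq 0$, this conclusion fails, so the orthogonality hypothesis must fail: $\widehat{\gamma^{}_\mu}$ and $\theta^{}_{|a|}$ are not mutually singular.

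Next I would translate non-singularity into the stated positivity. Write $\sigma = \widehat{\gamma^{}_\mu}$, which is a positive measure by the Bochner--Schwartz theorem, and set $C = \mathbb{S}^1_{|a|}$. The measure $\theta^{}_{|a|}$ is concentrated on $C$. If one had $\sigma(C) = 0$, then $\sigma$ would be concentrated on $\RR^2\setminus C$ while $\theta^{}_{|a|}$ is concentrated on the disjoint set $C$, forcing $\sigma\perp\theta^{}_{|a|}$ and contradicting the previous paragraph. Hence $\widehat{\gamma^{}_\mu}(\mathbb{S}^1_{|a|}) = \sigma(C) > 0$, which is the ``in particular'' claim.

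For the final assertion, suppose $\gamma^{}_\mu$ is radially symmetric; then so is its Fourier transform $\sigma$, i.e. $\sigma$ is invariant under $\mathrm{SO}(2)$. Since every rotation maps $C$ onto itself and preserves $\sigma$, the restriction $\sigma\bigl|^{}_{C}$ is a rotation-invariant measure on $C$, and it is finite because $C$ is compact and $\sigma$ is Radon. By uniqueness of the rotation-invariant (Haar) probability measure on the circle, $\sigma\bigl|^{}_{C} = c\,\theta^{}_{|a|}$ with $c = \sigma(C) > 0$. As the restriction of a positive measure is dominated by it, $\sigma(E) = 0$ implies $\sigma\bigl|^{}_{C}(E) = \sigma(E\cap C) \leqslant \sigma(E) = 0$, so $c\,\theta^{}_{|a|} = \sigma\bigl|^{}_{C} \ll \sigma = \widehat{\gamma^{}_\mu}$, as required.

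All of these steps are elementary given Theorems~\ref{thm:orthogonal} and~\ref{thm:main}; the only point demanding genuine care is the passage from ``not mutually singular'' to the concrete pointwise statements, and in particular the identification of $\sigma\bigl|^{}_{C}$ as an exact multiple of $\theta^{}_{|a|}$ via the uniqueness of Haar measure on the circle. This is the one place where radial symmetry is indispensable, and where one must use that $\sigma$ is positive and locally finite so that $\sigma(C)$ is a well-defined positive real.
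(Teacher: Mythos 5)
Your proof is correct and follows essentially the same route as the paper: there the corollary is stated with its proof left implicit, being precisely the contrapositive of Theorem~\ref{thm:orthogonal} applied to $\nu = f_a\lambda$, whose diffraction is $\theta_{|a|}$ by Theorem~\ref{thm:main}. Your additional details --- the translation-boundedness of $f_a\lambda$, the passage from non-singularity to $\widehat{\gamma_\mu}(\mathbb{S}^1_{|a|})>0$, and the Haar-measure identification of the restriction $\widehat{\gamma_\mu}\bigl|_{\mathbb{S}^1_{|a|}}$ as $c\,\theta_{|a|}$ in the radially symmetric case --- correctly fill in the steps the paper treats as immediate.
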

This corollary may become a helpful tool when studying tilings with statistical circular symmetry, such as the pinwheel tiling \cite{Rad1994} or its various generalisations \cite{Fre2008,Sad1998}. It is known that such tilings have radially symmetric diffraction \cite{Fre2008,Kor2026}; however, there is not a single tiling for which one has been able to show the existence of a circle in its diffraction. This is mostly because computing the autocorrelation measure explicitly is extremely difficult and seemingly impossible. This criterion bypasses the need to determine the autocorrelation explicitly, allowing one to work directly with the underlying structure. For inflation tilings, it is conceivable that one could explicitly profit from the inflation structure of the tiling itself (and not that of the autocorrelation, which is always more complicated) to obtain a solvable, recursive structure. 

Next, we look at the diffraction of a spherical wave in $\RR^d$ for $d\geqslant  3$. As in the circular case, we compute the reflected Eberlein convolution of the \emph{simple spherical waves} $f^{}_a(x) = \ee^{2\pi \ii a \|x\|}$  and $f^{}_b(x) = \ee^{2\pi \ii b \|x\|}$ where $a,b>0$.  By rotational invariance, it is enough to evaluate $\lb f^{}_a,f^{}_b \rb (x)$ for $x = (s, 0, \dots, 0) \in \RR^d$ and with $s \geqslant  0$. Using $d$-dimensional spherical coordinates,
we obtain
\begin{equation}\label{eq:higher-reflected-cov}
\lb f^{}_a, f^{}_b \rb (x) \, =\,  \lim_{R\to \infty} \myfrac{\vG\!\lt(\tfrac{d+2}{2}\rt)\Theta_d^{}}{\pi_{}^{\frac{d}{2}}R_{}^d}\,  \int_0^R \int^{\pi}_0 r^{d-1}  \ee^{2\pi\ii a r} \ee^{-2\pi\ii b \sqrt{r_{}^2+s_{}^2-2rs\cos\theta^{}_1}} \sin^{d-2}\theta^{}_1 \, \dd \theta^{}_1 \, \dd r \ts ,  
\end{equation}
where 
\[
\Theta^{}_d \, =\, \int^{\pi}_0 \sin^{d-3}\theta^{}_2 \dd\ts\theta^{}_2\,  \cdots  \int^{\pi}_0 \sin\theta^{}_{d-2} \dd\ts\theta^{}_{d-2}\int^{2\pi}_0  \dd\ts\theta^{}_{d-1} \, =\, \myfrac{2 \pi^{\frac{d-1}{2}}}{\vG\!\left(\frac{d-1}{2}\right)}\ts .
\]
Observe that we again encounter an exponential function with exponent of the form $\sqrt{r_{}^2+s_{}^2-2rs\cos\theta^{}}$. Thus, we can follow the same approach as for Theorem~\ref{thm:main}. Let us note that one can also use a different strategy, as outlined in \cite{BKM25}. 

Now, by Fact~\ref{lem:approx} and \eqref{eq:Bessel-integral}, we get
\begin{align*}
\lb f^{}_a, f^{}_b \rb (x) &\, =\,  \lim_{R\to \infty} \myfrac{\vG \! \lt(\tfrac{d+2}{2}\rt)\Theta_d^{}}{\pi_{}^{\frac{d}{2}}R_{}^d} \int_0^R \int^{\pi}_0 r^{d-1}  \ee^{2\pi\ii a r} \ee^{-2\pi\ii b (r - s\cos\theta_1^{})} \sin^{d-2}\theta^{}_1 \, \dd \theta_1^{} \, \dd r  \\[3pt]
&\, =\, \lim_{R\to \infty} \myfrac{\vG \! \lt(\tfrac{d}{2}\rt) d}{(\pi_{}bs)^{\frac{d}{2}-1}R_{}^d} J_{\frac{d}{2}-1}(2\pi b s) \int_0^R r^{d-1}  \ee^{2\pi\ii (a-b) r}   \, \dd r \ts ,  \\
\end{align*}
so when $a=b$ we get
\[
    \eta^{}_{f^{}_{a}}(x) \, =\,  \lb f^{}_a, f^{}_a \rb (x) \, =\,  \lim_{R\to \infty} \myfrac{\vG \! \lt(\tfrac{d}{2}\rt)d}{(\pi_{}as)^{\frac{d}{2}-1}R_{}^d} J_{\frac{d}{2}-1}(2\pi as) \int_0^R r^{d-1}    \, \dd r \, = \, 
    \myfrac{\vG \! \lt(\tfrac{d}{2}\rt)}{(\pi_{}as)^{\frac{d}{2}-1}} J_{\frac{d}{2}-1}(2\pi a s) \ts ,
\]
and when $a\neq b$ it is easy to see via integration by parts that
\begin{equation}
\label{eq:R_n-1}
\int_0^R r^{d-1}  \ee^{2\pi\ii (a-b) r} \, \dd r = \cO(R^{d-1}) \qquad \text{as} \ R \longrightarrow \infty\ts ,
\end{equation}
so $\lb f^{}_a, f^{}_b \rb (x) = 0$, as expected.

Moreover, the calculation remains true for $a<0$ or $b < 0$. The only small difficulty is the evaluation of a~Bessel function for a negative real argument. This can be easily bypassed using the series expansion~\eqref{eq:Bessel-series}. Indeed, 
\begin{align*}
\frac{J^{}_{\frac{d}{2}{-}1}\!\bigl( -2 \pi a \| x \| \bigr)}
    {\bigl(- \pi a \| x \| \bigr)^{\frac{d}{2} - 1}} & \, = \, \frac{ \Bigl(\frac{-2 \pi a \| x \|}{2} \Bigr)^{\!\frac{d}{2}{-}1}}{\bigl(- \pi a \| x \| \bigr)^{\!\frac{d}{2} {-} 1}} \!\sum_{m=0}^{\infty}
   \myfrac{(-1)^{m} \bigl(\tfrac{ -2 \pi a \| x \|}{2}\bigr)^{2m}}{m! \, \vG(\frac{d}{2}{+}m)}  \, = \, \sum_{m=0}^{\infty} \!
   \myfrac{(-1)^{m} \bigl( \pi a \| x \|\bigr)^{2m} }{m! \, \vG(\frac{d}{2}{+}m)}  \, =\,  \frac{J^{}_{\frac{d}{2}{-}1}\!\bigl( 2 \pi a \| x \|\bigr)}
    {\bigl(\pi a \| x \| \bigr)^{\frac{d}{2} - 1}} \, .
\end{align*}

\noindent We can summarise this derivation in the following theorem. 

\begin{theorem}\label{thm:sphere-d}
The natural autocorrelation of a $d$-dimensional spherical wave,
$f^{}_a(x) = \ee^{2 \pi \ii a \| x \|}$ with\/
$a \neq 0$, exists and is given by 
\[
    \eta^{}_{f^{}_a}(x) \, =\,  \vG \! \left( \myfrac{d}{2}\right) \, 
    \frac{J^{}_{\frac{d}{2}-1}\bigl( 2 \pi \| ax \|\bigr)}
    {\bigl( \pi \| ax \| \bigr)^{\frac{d}{2} - 1}} \ts.
\]
The corresponding diffraction  is\/
$\widehat{\ts \eta^{}_{f^{}_a}} = \theta^{}_{|a|}$, the uniform probability measure on the sphere of radius $|a|$ in $d$ dimensions. In the limit as\/ $a\to 0$, this converges to the diffraction of
the constant function\/ $1$, with\/ $\eta^{}_{f^{}_0} = 1$ and \/$\widehat{\ts \gamma^{}_{f^{}_0}} = \delta^{}_{0}$.   

Moreover, we have $\lb f^{}_a, f^{}_b \rb = 0$ whenever $a\neq b$. \qed
\end{theorem}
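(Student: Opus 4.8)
The plan is to follow the same route as in the two-dimensional case (Theorem~\ref{thm:main}), reducing everything to a one-variable radial integral after exploiting rotational symmetry and the Bessel integral representation. First I would note that since $f^{}_a$ and $f^{}_b$ are radial and the ball sequence $(B^{}_R)^{}_{R>0}$ is rotation invariant, the reflected Eberlein convolution $\lb f^{}_a, f^{}_b \rb$ is again radial, so it suffices to evaluate it at a point $x = (s, 0, \ldots, 0)$ with $s \geqslant 0$. Passing to $d$-dimensional spherical coordinates then produces the expression \eqref{eq:higher-reflected-cov}, in which the integrations over $\theta^{}_2, \ldots, \theta^{}_{d-1}$ contribute only the constant factor $\Theta^{}_d$, while the sole nontrivial angular dependence sits in the polar angle $\theta^{}_1$ through $\sqrt{r^2 + s^2 - 2rs\cos\theta^{}_1}$.

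The heart of the argument is the asymptotic replacement of Fact~\ref{lem:approx}: inside the radial integral one has $\ee^{-2\pi\ii b\sqrt{r^2 + s^2 - 2rs\cos\theta^{}_1}} \sim \ee^{-2\pi\ii b(r - s\cos\theta^{}_1)}$ as $r \to \infty$. I expect this to be the main obstacle, since the equivalence holds only pointwise in $\theta^{}_1$, and one must justify interchanging it with both the angular integral and the infinite-volume limit. The error in the square-root expansion is $\cO(1/r)$ uniformly in $\theta^{}_1$, and since it is weighted against $r^{d-1}$ and then divided by $R^d$, one checks that its contribution vanishes in the limit. This is precisely the higher-dimensional analogue of Lemma~\ref{lem:2d-sphere-integral}, whose full analytic justification is deferred to \cite{Kor2026}.

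Once the replacement is in place, the angular integral $\int^{\pi}_0 \ee^{2\pi\ii bs\cos\theta^{}_1}\sin^{d-2}\theta^{}_1\,\dd\theta^{}_1$ is exactly the Bessel integral representation \eqref{eq:Bessel-integral} with $\nu = \tfrac{d}{2}-1$ and $z = 2\pi bs$, which factors out $J^{}_{\frac{d}{2}-1}(2\pi bs)$ together with the correct gamma and power prefactors. What remains is the radial integral $\int^R_0 r^{d-1}\ee^{2\pi\ii(a-b)r}\,\dd r$. When $a = b$ this is simply $R^d/d$, so after dividing by $R^d$ and letting $R \to \infty$ one arrives at the claimed closed form $\eta^{}_{f^{}_a}(x) = \vG(\tfrac{d}{2})\,J^{}_{\frac{d}{2}-1}(2\pi\|ax\|)/(\pi\|ax\|)^{\frac{d}{2}-1}$; when $a \neq b$, integration by parts shows this integral is $\cO(R^{d-1})$ as in \eqref{eq:R_n-1}, so dividing by $R^d$ forces the limit to vanish, giving $\lb f^{}_a, f^{}_b \rb = 0$.

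Finally, to cover $a < 0$ (or $b < 0$) I would avoid evaluating the Bessel function at a negative argument by invoking the power series \eqref{eq:Bessel-series}: a direct computation shows that $J^{}_{\frac{d}{2}-1}(-2\pi a\|x\|)/(-\pi a\|x\|)^{\frac{d}{2}-1} = J^{}_{\frac{d}{2}-1}(2\pi a\|x\|)/(\pi a\|x\|)^{\frac{d}{2}-1}$, so the formula depends only on $|a|$. The diffraction then follows immediately by comparing the closed form for $\eta^{}_{f^{}_a}$ with \eqref{eq:sphere-Bessel}, which identifies it as $\widehat{\theta^{}_{|a|}}$, whence $\widehat{\eta^{}_{f^{}_a}} = \theta^{}_{|a|}$ by Fourier inversion. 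The $a \to 0$ behaviour is read off from the series, where $\eta^{}_{f^{}_0} \equiv 1$ and $\widehat{\gamma^{}_{f^{}_0}} = \delta^{}_0$, consistent with the plane-wave case.
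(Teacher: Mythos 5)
Your proposal is correct and takes essentially the same route as the paper's own derivation: rotational invariance plus $d$-dimensional spherical coordinates leading to \eqref{eq:higher-reflected-cov}, the asymptotic replacement of Fact~\ref{lem:approx}, the Bessel integral representation \eqref{eq:Bessel-integral} with $\nu = \tfrac{d}{2}-1$, the case split on the radial integral via \eqref{eq:R_n-1}, and the series expansion \eqref{eq:Bessel-series} to reduce negative $a$ to $|a|$. The paper likewise treats the replacement step as the key analytic point and defers its full justification to \cite{Kor2026}.
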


\begin{remark}
\label{rem:spherical_uniq_problem}
This theorem highlights a difference between the plane and spherical wave settings. If one considers a plane wave $\ee^{2\pi \ii a\ts x}$ and its complex conjugate $\ee^{-2\pi \ii a\ts x}$, the diffraction can distinguish them. This is no longer true for any simple spherical wave, as the diffraction image of both consists of one circle of the same radius. This implies that, unlike plane waves, which can be reconstructed from the diffraction up to a phase factor, ambiguity arises for a single spherical wave even without considering different phase factors.  \exend
\end{remark}

Theorem~\ref{thm:sphere-d} establishes the existence of functions with rotationally invariant diffraction using a~different approach than in \cite{BKM25}.  For a function $f$, by \emph{rotationally invariant}, we mean that $f(x) = f(Rx)$ for all $x \in \RR^d$ and all $R \in \SO(d)$.
To analyse such functions in more detail and to provide analogous statements to those in the plane wave setting, we need a suitable functional-analytic framework. 

We start by relating the Besicovitch $p$-norm of a spherically symmetric function to the  Besicovitch $p$-norm of its radial part. This will eventually lead to the subsequent definition of $p$-Besicovitch radially almost periodic functions. 

\begin{proposition}
\label{prop:radial_ineq}
    Let $1 \leqslant  p < \infty$ and let $\cA_1 = \bigl([-R,R]\bigr)_{R>0}$.  Given an even function $f \in BL_{\cA_1}^p(\RR)$, define $F:\RR^d\rightarrow\CC$ by $F(x) \deq f\bigl(\|x\|\bigr)$ for all $x\in\RR^d$.  Then the Besicovitch $p$-seminorm of $F$ satisfies
    \[
        \bigl\|F\bigr\|^{}_{b,p} \, \leqslant \, d^{\frac{1}{p}} \bigl\|f\bigr\|^{}_{b,p,\cA_1}\ts .
    \]
    In particular, we have $F\in BL_{}^p(\RR^d)$.
\end{proposition}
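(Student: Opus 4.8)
The plan is to reduce the $d$-dimensional ball average defining $\|F\|^{}_{b,p}$ to a one-dimensional radial integral by passing to spherical coordinates, and then to compare it directly with the interval average defining $\|f\|^{}_{b,p,\cA_1}$. Since $F(x) = f(\|x\|)$ is rotationally invariant and $f \in \Lloc{p}(\RR)$ guarantees $F \in \Lloc{p}(\RR^d)$ (so that all integrals below are finite for each fixed $R$), integrating out the angular variables produces the surface area of the unit sphere as a constant factor:
\[
\int_{B^{}_R} |F(x)|^p \,\dd x \, = \, \myfrac{2\pi^{\frac{d}{2}}}{\vG\!\left(\tfrac{d}{2}\right)} \int_0^R |f(r)|^p \, r^{d-1} \,\dd r \ts .
\]
Dividing by $\vol(B^{}_R) = \frac{\pi^{d/2}}{\vG(d/2+1)} R^d = \frac{2\pi^{d/2}}{d\,\vG(d/2)}R^d$, the surface-area constant cancels and one is left with the clean identity
\[
\myfrac{1}{\vol(B^{}_R)} \int_{B^{}_R} |F(x)|^p \,\dd x \, = \, \myfrac{d}{R^d} \int_0^R |f(r)|^p \, r^{d-1} \,\dd r \ts .
\]

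The key step, which is also where the constant $d^{1/p}$ enters, is the crude bound $r^{d-1} \leqslant R^{d-1}$ on the interval $[0,R]$. Applying it pulls the radial weight out of the integral:
\[
\myfrac{d}{R^d} \int_0^R |f(r)|^p \, r^{d-1} \,\dd r \, \leqslant \, \myfrac{d}{R} \int_0^R |f(r)|^p \,\dd r \ts .
\]
Next I invoke that $f$ is even, so that $\int_0^R |f(r)|^p \,\dd r = \tfrac{1}{2}\int_{-R}^R |f(r)|^p \,\dd r$, and recognise $\frac{1}{2R}\int_{-R}^R |f|^p$ as precisely the average over $[-R,R]$ appearing in $\|\cdot\|^{}_{b,p,\cA_1}$. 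This gives, for each fixed $R > 0$, the pointwise (in $R$) comparison
\[
\myfrac{1}{\vol(B^{}_R)} \int_{B^{}_R} |F(x)|^p \,\dd x \, \leqslant \, d \cdot \myfrac{1}{\vol([-R,R])} \int_{[-R,R]} |f(r)|^p \,\dd r \ts .
\]

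Finally, taking $p$-th roots and then $\limsup_{R\to\infty}$ of both sides — for which I only need monotonicity of $\limsup$ and the fact that the constant $d^{1/p}$ factors out — yields
\[
\|F\|^{}_{b,p} \, \leqslant \, d^{\frac{1}{p}} \, \|f\|^{}_{b,p,\cA_1} \ts ,
\]
and in particular $F \in BL^p(\RR^d)$, since the right-hand side is finite by hypothesis. I do not expect a genuine obstacle here: the argument is essentially bookkeeping, with care needed only in justifying the spherical-coordinate change and the finiteness of the fixed-$R$ integrals (both immediate from $f\in\Lloc{p}(\RR)$), and in applying the estimate \emph{before} passing to the limit so that no product-of-$\limsup$ subtleties arise. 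The bound $r^{d-1}\leqslant R^{d-1}$ is deliberately wasteful, which is exactly why the constant $d^{1/p}$ rather than $1$ appears; this reflects the fact that a radial profile may concentrate its mass near the boundary sphere of $B^{}_R$.
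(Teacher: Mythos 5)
Your proof is correct and follows essentially the same route as the paper: pass to spherical coordinates, observe that the normalised ball average equals $\frac{d}{R^d}\int_0^R |f(r)|^p r^{d-1}\,\dd r$, bound the radial weight by $R^{d-1}$ (the paper equivalently bounds $(|r|/R)^{d-1}\leqslant 1$ after symmetrising), use evenness to pass to the average over $[-R,R]$, and take $p$-th roots and $\limsup$ at the end. The only difference is the order of the evenness step and the crude bound, which is immaterial.
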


\begin{proof}
    By definition, we have
    \[
        \bigl\|F\bigr\|^{}_{b,p} \, =\,  \limsup_{R\rightarrow\infty} \lt(\myfrac{1}{\vol(B^{}_R)} \int_{B^{}_R} \bigl|f(\|x\|)\bigr|^p \,\dd x\rt)^{\frac{1}{p}} \ts .
    \]
    Converting to higher-dimensional spherical coordinates and utilising the surface area and volume formulae for spheres in $\RR^d$, we obtain
    \begin{align*}
        \bigl\|F\bigr\|^{}_{b,p} &\, = \, \limsup_{R\rightarrow\infty} \lt(\frac{\vG \! \lt(\frac{d}{2}+1\rt)}{\pi^{\frac{d}{2}}R^d} \cdot \frac{2\pi^{\frac{d}{2}}}{\vG \! \lt(
    \frac{d}{2}\rt)} \int_0^R |f(r)|^p r^{d-1}\,\dd r\rt)^{\frac{1}{p}}  \\
    &\, = \, \limsup_{R\rightarrow\infty} \lt(\frac{d}{2R} \int_{-R}^R |f(r)|^p \lt(\frac{|r|}{R}\rt)^{d-1}\,\dd r\rt)^{\frac{1}{p}}  \\
    & \, \leqslant \, \limsup_{R\rightarrow\infty} \lt(\frac{d}{2R} \int_{-R}^R |f(r)|^p\,\dd r\rt)^{\frac{1}{p}}  \, = \, d^{\frac{1}{p}}\bigl\|f\bigr\|^{}_{b,p,\cA_1} \ts . \qedhere
    \end{align*}
\end{proof}

Recall that the usual Besicovitch $2$-almost periodic functions in $\RR^d$ are limits of trigonometric polynomials in the space $BL^{2}_{}(\RR^d)$. For rotationally invariant functions, this setting is unnatural, so we replace the usual trigonometrical polynomials with \emph{simple spherical waves} as introduced earlier and their linear combinations, which we call \emph{spherical waves}. The latter are functions of the form 
\[
\sum_{k=1}^N c^{}_{k} \ee^{2\pi\ii a^{}_{k} \|x\|}\ts ,
\]
where $c^{}_{k} \in \CC$, $a^{}_{k}\neq0$ and $N\in \NN$. 

\begin{definition}
    A spherically symmetric function $F:\RR^d \to \CC$ is called \emph{Besicovitch radially almost periodic}, if there exists a sequence of spherical waves $(S^{}_{N})^{}_{N\in \NN}$, with $S^{}_{N}(x) = \sum_{k=1}^N c^{}_{k} \ee^{2\pi\ii a^{}_{k} \|x\|}$, such that 
    \[ \| F - S^{}_{N} \|^{}_{b,2} \, \longrightarrow \, 0 \quad \mbox{as } N\to \infty.\]
    The space of all such functions will be denoted by $\Brap_{}^{\,2}(\RR^d)$.
\end{definition}

\noindent In other words, we have 
\[ \Brap_{}^{\ts 2}(\RR^d) \, = \, \overline{\, \Big\{ \textstyle \sum_{k=1}^N c^{}_{k} \ee^{2\pi\ii a^{}_{k} \|x\|} \, : \, c^{}_{k} \in \CC, \ a^{}_{k}>0, \ N\in \NN \Big\}\, }^{\|\cdot\|_{b,2}}.
\]

\begin{corollary}
    The space $\Brap_{}^{\ts 2}(\RR^d)$ is a closed subspace of $BL^{2}_{}(\RR^d)$.  
\end{corollary}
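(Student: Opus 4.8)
The plan is to read the result off directly from the way $\Brap^{\ts 2}(\RR^d)$ was defined, namely as the $\|\cdot\|^{}_{b,2}$-closure of the linear span $V$ of the simple spherical waves $\ee^{2\pi\ii a\|x\|}$ with $a\neq 0$. Closedness is then automatic, since the closure of any set is closed, so the real content is to verify two things: (i) that $V$, and hence its closure, is contained in $BL^{2}(\RR^d)$; and (ii) that the closure of the \emph{subspace} $V$ is again a subspace.

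For (i), I would note that each simple spherical wave satisfies $\bigl|\ee^{2\pi\ii a\|x\|}\bigr| = 1$, so it lies in $L^\infty(\RR^d)\subseteq BL^{2}(\RR^d)$; alternatively one applies Proposition~\ref{prop:radial_ineq} to its radial part $\ee^{2\pi\ii a|r|}$, which is even and bounded, hence in $BL^{2}_{\cA_1}(\RR)$. A finite linear combination of simple spherical waves is again such a combination, so $V$ is a linear subspace of $BL^{2}(\RR^d)$. If $F$ lies in the closure, then choosing $S^{}_N\in V$ with $\|F-S^{}_N\|^{}_{b,2}\to 0$ gives $\|F\|^{}_{b,2}\leqslant \|F-S^{}_N\|^{}_{b,2}+\|S^{}_N\|^{}_{b,2}<\infty$ for $N$ large, while $F=(F-S^{}_N)+S^{}_N\in\Lloc{2}(\RR^d)$; hence $F\in BL^{2}(\RR^d)$ and the closure does not leave $BL^{2}(\RR^d)$.

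For (ii), I would argue directly, using that $\|\cdot\|^{}_{b,2}$ is a seminorm --- the triangle inequality coming from Minkowski's inequality applied at each radius $R$ together with the subadditivity of $\limsup$ --- so that the induced pseudometric makes the vector-space operations continuous. Concretely, if $F=\lim^{}_N S^{}_N$ and $G=\lim^{}_N T^{}_N$ with $S^{}_N,T^{}_N\in V$, then $\|(F+G)-(S^{}_N+T^{}_N)\|^{}_{b,2}\leqslant \|F-S^{}_N\|^{}_{b,2}+\|G-T^{}_N\|^{}_{b,2}\to 0$ and $\|cF-cS^{}_N\|^{}_{b,2}=|c|\,\|F-S^{}_N\|^{}_{b,2}\to 0$, with $S^{}_N+T^{}_N$ and $cS^{}_N$ again in $V$; thus $F+G$ and $cF$ lie in the closure, which is therefore a subspace. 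Closedness of $\Brap^{\ts 2}(\RR^d)$ as a subset of $BL^{2}(\RR^d)$ follows from a diagonal argument: given $F^{}_n\in\Brap^{\ts 2}(\RR^d)$ with $\|F^{}_n-F\|^{}_{b,2}\to 0$, I pick spherical waves $S^{}_n$ with $\|F^{}_n-S^{}_n\|^{}_{b,2}<\tfrac{1}{n}$, so that $\|F-S^{}_n\|^{}_{b,2}\to 0$ and $F\in\Brap^{\ts 2}(\RR^d)$.

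I do not expect a genuine obstacle here; the only point requiring care is that $\|\cdot\|^{}_{b,2}$ is a seminorm rather than a norm, so the ambient topology is merely pseudometric and one implicitly works modulo the equivalence $\equiv$ of Eq.~\eqref{eq:equiv}. This is harmless for the subspace and closedness claims, but it is worth recording that the triangle inequality survives the $\limsup$, which is precisely what makes the closure of $V$ behave like the closure of a subspace in a topological vector space.
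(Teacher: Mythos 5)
Your proposal is correct and takes essentially the same route as the paper, whose entire proof is that the claim ``immediately follows from the definition of the space'': since $\Brap^{\ts 2}(\RR^d)$ is defined as the $\|\cdot\|^{}_{b,2}$-closure of the span of simple spherical waves, it is a closed subspace for the standard seminorm-space reasons you spell out. Your explicit verification (triangle inequality for the seminorm, the diagonal argument, and the caveat that one works modulo $\equiv$) simply fills in the routine details the paper leaves implicit.
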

\begin{proof}
    This immediately follows from the definition of the space.
\end{proof}

\begin{remark}
\label{rem:MAP}
One could try to define a radial version of almost periodicity by imposing the usual almost periodic conditions on the one-dimensional projection (the even extension of the radial part to the negative direction) of the studied function. Unfortunately, this approach does not seem to be suitable for our purposes, as the following example demonstrates. The simple spherical wave 
\[ f^{}_{a}(x) \, = \, \ee^{2\pi \ii a \|x \|}\ts ,  \]
belongs to the space of Besicovitch radially almost periodic functions. 
Its radial part, $\ee^{2\pi\ii a r}$, extended to the entire real line gives the even function $\ee^{2\pi\ii a |x|}$.  It is natural to ask if this function belongs to $\Bap^{\, 2}_{\cA_1}(\RR)$ (along $\cA_1 = \bigl([-R,R]\bigr)^{}_{R>0}$). A~simple computation shows that this is \emph{not} the case. 

However, it is worth mentioning that the function $\ee^{2\pi\ii a |x|}$ is a \emph{mean almost periodic function}, which is easy to show. For more details on mean almost periodicity, see \cite[Sec.~3]{LSS2020}. \exend
\end{remark}

If we consider the same equivalence relation as for the usual Besicovitch almost periodic functions, i.e., $ f \equiv g \,  \Longleftrightarrow \, \|f - g\|^{}_{b,2} = 0 \,$, then the space $\Brap^{\,2}(\RR^d) / \equiv$ is again a Hilbert space and we obtain an analogous statement to Theorem \ref{thm:hilbert}.  We mention that, although the equivalence class of a radial function in $\Brap_{}^{\ts 2}(\RR^d)$ may contain functions that are not radial, one can always work with a radial representative.

\begin{proposition}
\label{prop:Hilbert_spherical}
With the above notation, the space $\Brap_{}^{\ts 2}(\RR^d) / \equiv$ equipped with an inner product defined by
\[
    \langle [f], [g] \rangle^{}_{} \, \defeq \, M^{}_{}(f\ts \overline{g}\ts) \, = \, \lim_{R \rightarrow \infty} \myfrac{1}{\vol(B^{}_{R})} \int_{B^{}_{R}} f(t) \overline{g(t)} \,\dd t \ts ,
\]
is a Hilbert space. Moreover, the set 
\[ 
\bigl\{ [\ts \ee^{2\pi \ii a \|x \|} \ts] \, : \, a\in \RR \bigr\} \ts ,
\]
forms its orthonormal basis.
\end{proposition}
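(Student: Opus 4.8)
The plan is to mirror the structure of Theorem~\ref{thm:hilbert}, building the Hilbert space directly from the orthonormal system of simple spherical waves rather than attempting to transfer the one-dimensional Besicovitch theory through the radial part; as Remark~\ref{rem:MAP} shows, the latter route is blocked because the even extension $\ee^{2\pi\ii a|x|}$ fails to be Besicovitch almost periodic. I would therefore prove four things in turn: that $\langle[f],[g]\rangle = M(f\overline g)$ is a well-defined, positive-definite sesquilinear form on the quotient whose induced norm is $\|\cdot\|^{}_{b,2}$ (positive-definiteness being immediate, since $\langle[f],[f]\rangle = 0$ means exactly $[f]=[0]$); that the family $\{[\ee^{2\pi\ii a\|x\|}] : a\in\RR\}$ is orthonormal; that its linear span is dense; and that the quotient is complete.

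Orthonormality is immediate from the autocorrelation computations already carried out. Writing $f^{}_a(x) = \ee^{2\pi\ii a\|x\|}$, the inner product of two simple spherical waves is $M(f^{}_a\overline{f^{}_b}) = \lb f^{}_a, f^{}_b\rb(0)$, since evaluating the reflected Eberlein convolution at the origin returns exactly the mean of $f^{}_a\overline{f^{}_b}$. For $a\neq b$, Proposition~\ref{lem:sphere-1} (for $d=1$), Theorem~\ref{thm:main} (for $d=2$), and Theorem~\ref{thm:sphere-d} (for $d\geqslant 3$) give $\lb f^{}_a, f^{}_b\rb \equiv 0$. For $a = b$ we have $\lb f^{}_a, f^{}_a\rb(0) = \eta_{f_a}(0)$, and the small-argument expansion $J_\nu(z)\sim (z/2)^\nu/\vG(\nu+1)$ applied to the formula of Theorem~\ref{thm:sphere-d} yields $\eta_{f_a}(0) = 1$. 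Hence $\langle[f^{}_a],[f^{}_b]\rangle = \delta_{a,b}$. Density of the span is then nothing but the definition of $\Brap^{\ts2}(\RR^d)$ as the $\|\cdot\|^{}_{b,2}$-closure of the spherical waves.

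For the inner product itself, I would first verify on the dense subspace of finite spherical waves that, after collecting equal frequencies, $\bigl\|\sum_k c^{}_k f^{}_{a_k}\bigr\|_{b,2}^2 = \sum_k |c^{}_k|^2$; this exhibits the subspace as isometric to the finitely supported sequences in $\ell^2(\RR)$ and shows the parallelogram identity holds there. The identity then persists on all of $\Brap^{\ts2}(\RR^d)$ by continuity of the seminorm, so polarization produces a genuine inner product inducing $\|\cdot\|^{}_{b,2}$. To identify it with the literal mean $M(f\overline g)$, and to guarantee the latter exists as an honest limit rather than a mere $\limsup$, I would approximate $f,g$ by spherical waves $f^{}_n, g^{}_n$ and control the cross terms by the elementary Cauchy--Schwarz bound $\bigl|\tfrac{1}{\vol(B^{}_R)}\int_{B^{}_R} u\overline v\bigr| \leqslant \|u\|^{}_{b,2}\,\|v\|^{}_{b,2}$ in $\limsup$ form, which forces the upper and lower limits of $\tfrac{1}{\vol(B^{}_R)}\int_{B^{}_R} f\overline g$ to coincide.

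The main obstacle is completeness. Here I would use that $\Brap^{\ts2}(\RR^d)$ is a closed subspace of $BL^2(\RR^d)$ together with completeness of the Besicovitch space (established by the very argument underlying Theorem~\ref{thm:hilbert} in \cite{LSS2020}, which relies only on the $\limsup$-seminorm structure) to show that the natural isometry $V \defeq \Brap^{\ts2}(\RR^d)/{\equiv}\ \hookrightarrow \ell^2(\RR)$ is onto: given any $(c^{}_m)\in\ell^2$ with frequencies $(a^{}_m)$, the partial sums $\sum_{m\leqslant N} c^{}_m f^{}_{a_m}$ are $\|\cdot\|^{}_{b,2}$-Cauchy, their increments having norm-squared $\sum_{M<m\leqslant N}|c^{}_m|^2$, and hence converge to some $F\in\Brap^{\ts2}(\RR^d)$ with coordinate sequence $(c^{}_m)$. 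This surjectivity makes $V$ isometrically isomorphic to $\ell^2(\RR)$, so $V$ is a Hilbert space and the orthonormal system $\{[\ee^{2\pi\ii a\|x\|}] : a\in\RR\}$ is in fact an orthonormal basis. The delicate point throughout is that $\|\cdot\|^{}_{b,2}$ is only a $\limsup$-seminorm, so every convergence and completeness claim must be reduced to the honest $\ell^2$ behaviour of the expansion coefficients --- which is precisely what the orthogonality relations of Theorems~\ref{thm:main} and~\ref{thm:sphere-d} supply.
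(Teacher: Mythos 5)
Your proposal is correct and follows essentially the same route as the paper's proof: the inner product is controlled by Cauchy--Schwarz, orthonormality of $\{[\ts\ee^{2\pi\ii a\|x\|}\ts]\}$ rests on the orthogonality relations ultimately coming from Eq.~\eqref{eq:R_n-1}, totality of the spherical waves is just the definition of $\Brap^{\ts 2}(\RR^d)$, and completeness follows from its closedness inside the complete Besicovitch space $BL^2(\RR^d)$. If anything, you fill in two details the paper's terse proof glosses over, namely the existence of the mean $M(f\ts\overline{g}\ts)$ as a genuine limit (via approximation by spherical waves and cross-term estimates) and the explicit argument behind the paper's ``complete by construction'' claim.
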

\begin{proof}
We first show that the mean $M(f\ts \overline{g}\ts)$ is well defined for all $f,g\in \Brap_{}^{\ts 2}(\RR^d)$.
By the Cauchy--Schwarz inequality, we obtain 
\[
\myfrac{1}{\vol(B^{}_{R})} \int_{B^{}_{R}} f(t) \overline{g(t)} \,\dd t \, \leqslant \, \myfrac{1}{\vol(B^{}_{R})} \biggl(\int_{B^{}_{R}} |f(t)|^2  \,\dd t \biggr)^{\frac{1}{2}} \biggl(\int_{B^{}_{R}} |g(t)|^2  \,\dd t \biggr)^{\frac{1}{2}} \, .
\]
Taking the limit $R \to \infty$ yields 
\[M(f\ts \overline{g}\ts) \, \leqslant \, \|f\|^{}_{b,2} \ts \|g\|^{}_{b,2} \, < \, \infty \, , \]
 as $\Brap_{}^{\ts 2}(\RR^d) \subseteq~BL^2(\RR^d)$, which shows that the mean exists. Independence of the choice of representative is clear. 

The mapping $\langle \ts \cdot \ts , \ts \cdot \ts \rangle$ satisfies the properties of an inner product, as one can directly check. The space $\Brap_{}^{\ts 2}(\RR^d) / \equiv$ is complete with respect to the norm induced by the inner product by construction, which is simply $\|\cdot\|^{}_{b,2}$. 

A direct computation yields 
\[ 
\bigl\langle [\ts \ee^{2\pi \ii a \|x \|} \ts],\ts[\ts \ee^{2\pi \ii b \|x \|} \ts] \bigr\rangle \, = \, \begin{cases}
    1, & \mbox{ if } a=b,\\
    0, & \mbox{ otherwise.}
\end{cases}
\]
Here, one benefits from the estimate \eqref{eq:R_n-1}. As the set $\bigl\{ [\ts \ee^{2\pi \ii a \|x \|} \ts] \, : \, a\in \RR \bigr\}$ is, by construction, a total set in $\Brap_{}^{\ts 2}(\RR^d) / \equiv \,$, the claim follows. 
\end{proof}

Since $\Brap_{}^{\ts 2}(\RR^d) / \equiv$ is a Hilbert space, we get the following as a consequence of the Riesz--Fischer property and the Parseval identity. 

\begin{corollary}
\label{thm:characterisation_Brap}
Let $F \in BL^2_{}(\RR^d)$.  Then $F \in \Brap^2(\RR^d)$ if and only if there exist sequences $c^{}_n \in \CC$ and $a^{}_n \in \RR$ such that
\[
    F(x) \, = \, \sum_{k=1}^{\infty} c^{}_{k} \ee^{2\pi\ii a^{}_{k} \|x\|} \ts ,
\]
in $(\Brap^2(\RR^d), \|\cdot\|^{}_{b,2})$.  Moreover, in this case, we have
\[
    c^{}_{k} \, = \, \bigl\langle F, f^{}_{a^{}_{k}} \bigr\rangle \,,
\]
where $f^{}_{a^{}_{k}}(x) = \ee^{2\pi\ii a^{}_{k} \|x\|}$, and 
\pushQED{\qed}
\[
    \|F\|^{}_{b,2} \, = \, \sum_{k=1}^\infty |c^{}_{k}|^2 \ts . \qedhere
\]
\popQED
\end{corollary}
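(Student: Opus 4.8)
The plan is to recast the statement inside the Hilbert space $H \defeq \Brap^2(\RR^d)/\!\equiv$ supplied by Proposition~\ref{prop:Hilbert_spherical}, whose inner product $\langle\,\cdot\,,\,\cdot\,\rangle$ induces exactly the seminorm $\|\cdot\|_{b,2}$ and whose orthonormal basis is $\{[f_a] : a\in\RR\}$, with $f_a(x)=\ee^{2\pi\ii a\|x\|}$. Once this identification is made, the corollary is simply the concrete avatar of the abstract Fourier expansion in a Hilbert space equipped with a distinguished orthonormal basis, and the two implications separate cleanly.

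For the ``if'' direction, suppose $F\in BL^2(\RR^d)$ is given by a series $\sum_{k=1}^{\infty} c_k f_{a_k}$ converging in $\|\cdot\|_{b,2}$. Then the partial sums $S_N=\sum_{k=1}^{N} c_k f_{a_k}$ are spherical waves with $\|F-S_N\|_{b,2}\to 0$, so $F$ lies in the $\|\cdot\|_{b,2}$-closure of the spherical waves, which is $\Brap^2(\RR^d)$ by definition.

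For the ``only if'' direction, take $F\in\Brap^2(\RR^d)$ and regard $[F]\in H$. The coefficients $\langle F,f_a\rangle$ are defined for every $a\in\RR$, and the sole point needing care is that the index set is the uncountable line: I would first show that only countably many of them are nonzero. This follows from Bessel's inequality, since for any finite collection of distinct radii $a_1,\dots,a_n$ one has $\sum_{j=1}^{n}|\langle F,f_{a_j}\rangle|^2\leqslant\|F\|_{b,2}^2<\infty$; hence each set $\{a\in\RR : |\langle F,f_a\rangle|\geqslant 1/m\}$ is finite, and their union over $m\in\NN$ is countable. Enumerating this union as $(a_k)^{}_{k\in\NN}$ and putting $c_k\defeq\langle F,f_{a_k}\rangle$, the completeness of $H$ together with the Riesz--Fischer property gives $[F]=\sum_{k}c_k[f_{a_k}]$ with convergence in $\|\cdot\|_{b,2}$, which is precisely the asserted series for $F$ (and one may take a radial representative, as noted before Proposition~\ref{prop:Hilbert_spherical}). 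Finally, Parseval's identity gives $\|F\|_{b,2}^2=\sum_{k}|c_k|^2$, the claimed norm relation.

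The main obstacle is exactly the uncountability of the basis index set $\RR$: unlike the classical one-dimensional Fourier setting, the frequencies cannot be listed in advance, so the passage from a total orthonormal system to an honest expansion of $[F]$ must route through Bessel's inequality to secure countable support before Riesz--Fischer can be invoked. After that, everything reduces to standard Hilbert-space machinery applied to the space identified in Proposition~\ref{prop:Hilbert_spherical}.
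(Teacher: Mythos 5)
Your proof is correct and follows essentially the same route as the paper, which derives this corollary directly from the Hilbert-space structure and orthonormal basis of Proposition~\ref{prop:Hilbert_spherical} via the Riesz--Fischer property and Parseval's identity. Your explicit treatment of the countable-support issue through Bessel's inequality is exactly the standard detail the paper leaves implicit, so the two arguments coincide in substance.
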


Despite Remark~\ref{rem:MAP}, there is an alternate description of  $\Brap_{}^{\ts 2}(\RR^d)$ using the radial part of the function. For that, one has to consider the Besicovitch space $\Bap^{\, 2}_{\mathcal{B}}(\RR)$ with $\mathcal{B} = \bigl([0,R] \bigr)^{}_{R>0}$. Note that Proposition~\ref{prop:radial_ineq} remains true if we switch to the van Hove sequence $\mathcal{B}$. 

\begin{fact}
The space $\Bap^{\, 2}_{\mathcal{B}}(\RR)$ is (after taking the quotient by $\equiv$) a Hilbert space. The inner product on this space will be denoted by $\llangle \cdot \ts, \ts \cdot \rrangle^{}_{\mathcal{B}}$. With respect to this inner product, 
\[ 
\bigl\{ [\ts \ee^{2\pi \ii a r} \ts] \, : \, a\in \RR \bigr\} 
\]
is an orthonormal basis. \exend
\end{fact}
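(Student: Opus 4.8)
The plan is to mirror the proof of Theorem~\ref{thm:hilbert} one dimension down, working on the half-line via the van Hove sequence $\mathcal{B} = \bigl([0,R]\bigr)^{}_{R>0}$. First I would establish that the sesquilinear form
\[
    \llangle f, g \rrangle^{}_{\mathcal{B}} \, = \, \lim_{R\to\infty} \myfrac{1}{R} \int_0^R f(r) \overline{g(r)} \,\dd r
\]
is well defined on $\Bap^{\, 2}_{\mathcal{B}}(\RR)$; this follows exactly as in Proposition~\ref{prop:Hilbert_spherical}, using the Cauchy--Schwarz inequality to bound the integral by $\|f\|^{}_{b,2,\mathcal{B}}\,\|g\|^{}_{b,2,\mathcal{B}}$, so the mean exists and is finite because every element of $\Bap^{\, 2}_{\mathcal{B}}(\RR)$ lies in $BL^2_{\mathcal{B}}(\RR)$. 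That the form satisfies the inner product axioms is a direct check, and completeness of $\Bap^{\, 2}_{\mathcal{B}}(\RR)/\equiv$ with respect to the induced norm holds by construction, since the induced norm is precisely $\|\cdot\|^{}_{b,2,\mathcal{B}}$ and the space is defined as a $\|\cdot\|^{}_{b,2,\mathcal{B}}$-closure.

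Next I would verify orthonormality of the candidate basis. A direct computation gives
\[
    \bigl\llangle [\ts \ee^{2\pi \ii a r}\ts], [\ts \ee^{2\pi \ii b r}\ts] \bigr\rrangle^{}_{\mathcal{B}} \, = \, \lim_{R\to\infty} \myfrac{1}{R} \int_0^R \ee^{2\pi \ii (a-b) r} \,\dd r \, = \, \begin{cases} 1, & a = b,\\ 0, & a \neq b,\end{cases}
\]
where the $a\neq b$ case follows from the elementary estimate $\int_0^R \ee^{2\pi \ii (a-b) r}\,\dd r = \cO(1)$, entirely analogous to Eq.~\eqref{eq:R_n-1}. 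Note that, in contrast to the full-line case of Theorem~\ref{thm:hilbert}, the characters $\ee^{2\pi\ii a r}$ and $\ee^{-2\pi\ii a r}$ are \emph{not} identified here, since we integrate only over $[0,R]$; this is exactly the phenomenon underlying the homometry remark and Remark~\ref{rem:spherical_uniq_problem}, and it is why the indexing set is $a\in\RR$ rather than $a\geqslant 0$.

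Finally, I would argue that this orthonormal set is complete, i.e., that its closed linear span is all of $\Bap^{\, 2}_{\mathcal{B}}(\RR)/\equiv$. This is immediate from the definition: $\Bap^{\, 2}_{\mathcal{B}}(\RR)$ is the $\|\cdot\|^{}_{b,2,\mathcal{B}}$-closure of the linear span of the functions $\ee^{2\pi\ii a r}$, so the set $\bigl\{[\ts\ee^{2\pi\ii a r}\ts]\bigr\}$ is a total set, and a total orthonormal set in a Hilbert space is an orthonormal basis. The main obstacle, such as it is, is purely bookkeeping: one must confirm that the seminorm $\|\cdot\|^{}_{b,2,\mathcal{B}}$ built from $\mathcal{B}=\bigl([0,R]\bigr)^{}_{R>0}$ genuinely yields completeness, which requires $\mathcal{B}$ to behave like a sufficiently nice van Hove sequence on $[0,\infty)$ in the sense of the remark following Theorem~\ref{thm:hilbert}; once this is granted, the Hilbert space structure and the basis property transfer verbatim from the one-dimensional argument of \cite[Thm.~3.19]{LSS2020}.
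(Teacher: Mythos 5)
Your proposal is correct and takes essentially the same route as the paper: the paper states this as a Fact without proof precisely because it is Theorem~\ref{thm:hilbert} (i.e.\ \cite[Thm.~3.19]{LSS2020}) applied in dimension one to the van Hove sequence $\mathcal{B} = \bigl([0,R]\bigr)^{}_{R>0}$ (with the net-versus-sequence point handled as in Remark~\ref{rem:net_vs_sequence}), and your explicit verification --- Cauchy--Schwarz for well-definedness of the mean, the $\mathcal{O}(1)$ analogue of Eq.~\eqref{eq:R_n-1} for orthonormality, totality by construction, completeness deferred to the general theory --- mirrors the paper's own proof of the parallel Proposition~\ref{prop:Hilbert_spherical} almost verbatim. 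One correction to an aside: your claimed contrast with the full-line case is wrong, since in Theorem~\ref{thm:hilbert} the characters $\ee^{2\pi\ii a x}$ and $\ee^{-2\pi\ii a x}$ are likewise distinct, mutually orthogonal basis elements indexed by all of $\RR^d$, so no identification occurs in either setting; the remark plays no role in your argument and should simply be deleted.
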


Now, the map $\Psi$ assigning to each radial profile function a rotationally invariant function in~$d$ dimensions provides an isometric isomorphism of spaces $\Bap^{\, 2}_{\mathcal{B}}(\RR)$ and $\Brap_{}^{\ts 2}(\RR^d)$. Indeed, it maps one orthonormal basis to another bijectively while preserving the orthonormality of the basis elements, as shown by a direct computation. 

\begin{theorem}
For any dimension $d\geq 1$, the map 
\begin{align*}
        \Psi: &\, \bigl(\Bap^{\, 2}_{\mathcal{B}}(\RR) / \equiv\ts , \, \llangle \cdot , \ts \cdot \rrangle^{}_{\mathcal{B}} \bigr) \, \longrightarrow \, \bigl(\Brap_{}^{\ts 2}(\RR^d) / \equiv\ts , \, \langle \cdot , \ts \cdot \rangle\bigr) \ts ,
\end{align*}
defined by 
\[\Psi(f)(x) \, \defeq \, f(\|x\|) \ts ,\] 
constitutes an isometric isomorphism of Hilbert spaces. \qed
\end{theorem}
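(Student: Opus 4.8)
The plan is to realise $\Psi$ as the unitary canonically induced by the correspondence between the two orthonormal bases; the whole statement is then an instance of the standard principle that a linear map carrying one orthonormal basis bijectively onto an orthonormal basis of the target extends to a surjective isometry.

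First I would check that $\Psi$ is well defined as a map of quotient spaces and that its range lands in $\Brap^{\ts 2}(\RR^d)$. Linearity is immediate from the pointwise formula $\Psi(f)(x)=f(\|x\|)$. For the quotient to make sense I would invoke the radial inequality: running the computation of Proposition~\ref{prop:radial_ineq} for the half-line van Hove sequence $\mathcal{B}=([0,R])^{}_{R>0}$, as the text notes is permissible, gives $\|\Psi(f)\|^{}_{b,2}\leqslant d^{1/2}\|f\|^{}_{b,2,\mathcal{B}}$ for every $f\in\Bap^{\,2}_{\mathcal{B}}(\RR)$. Hence $\|f\|^{}_{b,2,\mathcal{B}}=0$ forces $\|\Psi(f)\|^{}_{b,2}=0$, so $\Psi$ descends to the quotient and is moreover continuous. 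Writing $f$ as a limit in $\|\cdot\|^{}_{b,2,\mathcal{B}}$ of finite sums $\sum_k c^{}_k\ee^{2\pi\ii a^{}_k r}$, continuity sends these to the corresponding finite sums $\sum_k c^{}_k\ee^{2\pi\ii a^{}_k\|x\|}$, whose limit therefore lies in $\Brap^{\ts 2}(\RR^d)$; this shows $\Psi$ maps the domain into the codomain.

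Next I would record the action on basis vectors, namely $\Psi([\ee^{2\pi\ii a r}])=[\ee^{2\pi\ii a\|x\|}]$. The preceding Fact gives that $\{[\ee^{2\pi\ii a r}]\}^{}_{a\in\RR}$ is an orthonormal basis of $\Bap^{\,2}_{\mathcal{B}}(\RR)/\equiv$, and Proposition~\ref{prop:Hilbert_spherical} gives that $\{[\ee^{2\pi\ii a\|x\|}]\}^{}_{a\in\RR}$ is an orthonormal basis of $\Brap^{\ts 2}(\RR^d)/\equiv$. In particular, within each family distinct frequencies yield orthogonal, hence distinct, classes, so the label map $a\mapsto a$ is a genuine bijection of the two index sets and $\Psi$ carries the first orthonormal basis bijectively onto the second. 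The direct computation then gives $\langle\Psi([\ee^{2\pi\ii a r}]),\Psi([\ee^{2\pi\ii b r}])\rangle=\delta^{}_{a,b}=\llangle[\ee^{2\pi\ii a r}],[\ee^{2\pi\ii b r}]\rrangle^{}_{\mathcal{B}}$, that is, $\Psi$ preserves inner products on the bases.

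Finally I would assemble these facts. For $f=\sum_k c^{}_k\ee^{2\pi\ii a^{}_k r}$, continuity yields $\Psi(f)=\sum_k c^{}_k\ee^{2\pi\ii a^{}_k\|x\|}$, and Parseval in both Hilbert spaces gives $\|\Psi(f)\|^{2}_{b,2}=\sum_k|c^{}_k|^2=\|f\|^{2}_{b,2,\mathcal{B}}$, so $\Psi$ is isometric and in particular injective; surjectivity is automatic because the image already contains the total set $\{[\ee^{2\pi\ii a\|x\|}]\}^{}_{a\in\RR}$. I expect the only delicate point to be the well-definedness step: one must confirm that the radial inequality genuinely transfers to the half-line sequence $\mathcal{B}$, so that the seminorm of the radial profile controls that of its rotationally invariant extension, and that distinct frequencies index genuinely distinct classes in both spaces. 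Once these are secured, the remainder is the routine extension-by-continuity of a basis-to-basis bijection.
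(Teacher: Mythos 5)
Your proposal is correct and takes essentially the same route as the paper: the paper's entire argument is that $\Psi$ carries the orthonormal basis $\bigl\{[\ts\ee^{2\pi\ii a r}\ts] \,:\, a\in\RR\bigr\}$ of $\Bap^{\, 2}_{\mathcal{B}}(\RR)/\equiv$ bijectively onto the orthonormal basis $\bigl\{[\ts\ee^{2\pi\ii a \|x\|}\ts] \,:\, a\in\RR\bigr\}$ of $\Brap_{}^{\ts 2}(\RR^d)/\equiv$ while preserving orthonormality (verified by a direct computation), which is exactly your basis-to-basis unitary argument. The extra steps you supply --- well-definedness on the quotients via the radial inequality transferred to $\mathcal{B}$, extension by continuity, Parseval, and surjectivity from totality of the image --- are precisely the details the paper leaves implicit.
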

As a slightly surprising by-product, we have obtained the following identity for averages of $\Bap^{\, 2}_{\mathcal{B}}(\RR)$ functions. 

\begin{corollary}
    For all $f,\, g \in \Bap^{\, 2}_{\mathcal{B}}(\RR)$ and for any dimension $d\geqslant 1$, we have 
    \[ 
    \lim_{R\to \infty} \myfrac{1}{R}\int^{R}_0 f(r) \overline{g(r)} \ts \dd r \, = \, \lim_{R\to \infty} \myfrac{d}{R^d}\int^{R}_0 f(r) \overline{g(r)} \ts r^{d-1} \ts \dd r \ts.
    \]
\end{corollary}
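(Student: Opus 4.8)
The plan is to recognise the two sides of the asserted identity as the inner products of $f$ and $g$ in the two Hilbert spaces connected by the isometric isomorphism $\Psi$, and then to invoke that $\Psi$ preserves inner products. First I would note that the left-hand side is exactly the inner product $\llangle f, g \rrangle^{}_{\mathcal{B}}$ on $\Bap^{\, 2}_{\mathcal{B}}(\RR)$, since $\mathcal{B} = \bigl([0,R]\bigr)^{}_{R>0}$ and $\vol\bigl([0,R]\bigr) = R$; its existence is guaranteed by the Hilbert space structure of $\Bap^{\, 2}_{\mathcal{B}}(\RR)/\equiv$.

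Next I would unpack the right-hand side as the inner product $\langle \Psi(f), \Psi(g) \rangle$ on $\Brap_{}^{\ts 2}(\RR^d)$. Writing $F = \Psi(f)$ and $G = \Psi(g)$, so that $F(x) = f(\|x\|)$ and $G(x) = g(\|x\|)$, I would convert the defining integral $\frac{1}{\vol(B^{}_R)} \int_{B^{}_R} F(t)\overline{G(t)}\,\dd t$ into $d$-dimensional spherical coordinates, exactly as in the proof of Proposition~\ref{prop:radial_ineq}. Using the surface-area factor $2\pi^{d/2}/\vG(d/2)$ together with $\vol(B^{}_R) = \pi^{d/2} R^d/\vG(d/2{+}1)$ and the functional equation $\vG(d/2{+}1) = (d/2)\,\vG(d/2)$, the constant prefactors collapse to $d/R^d$, which reproduces precisely the right-hand side of the corollary.

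Finally, I would appeal to the theorem immediately preceding this corollary, which asserts that $\Psi$ is an isometric isomorphism of Hilbert spaces and therefore preserves inner products, giving $\langle \Psi(f), \Psi(g) \rangle = \llangle f, g \rrangle^{}_{\mathcal{B}}$. Combining this with the two identifications above yields the claimed equality of the two limits.

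There is essentially no real obstacle here: the entire content is carried by the isometry of $\Psi$, and the corollary is simply that isometry written out explicitly through the spherical-coordinate formula. The only points requiring minor care are the bookkeeping of the gamma-function and volume constants, and the observation that both one-sided limits genuinely exist, which follows from the completeness of the two Hilbert spaces rather than from any direct estimate on $f$ and $g$ individually.
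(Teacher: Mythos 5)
Your proposal is correct and follows essentially the same route as the paper's own proof: identify both limits as the inner products $\llangle f, g \rrangle^{}_{\mathcal{B}}$ and $\langle \Psi(f), \Psi(g) \rangle$ (the latter via the spherical-coordinate computation producing the prefactor $d/R^d$), and then invoke the isometry of $\Psi$. The paper's proof is just a terser version of exactly this argument.
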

\begin{proof}
    For every $n$ and for all $f,\, g \in \Bap^{\, 2}_{\mathcal{B}}(\RR)$, the isometric isomorphism $\Psi$ gives
    \[\llangle f \ts, \ts  g \rrangle^{}_{\mathcal{B}} \, = \, \bigl\langle \Psi(f) \ts , \ts \Psi(g) \bigr\rangle, \]
    and the integration in spherical coordinates gives the prefactor.
\end{proof}
Now, we can profit from this alternative characterisation, as it provides the key argument for the existence of an autocorrelation. We will mimic the strategy we used for the planar waves by first arguing that the autocorrelation of the approximants exists and converges uniformly to a function, which we then declare to be the autocorrelation. 

\begin{proposition}
\label{prop:spherical_auto_ex}
    For every $F\in \Brap_{}^{\ts 2}(\RR^d)$, the autocorrelation function $\eta^{}_{F}$ exists and $\eta^{}_{F} \in \Cu(\RR^d)$. 
\end{proposition}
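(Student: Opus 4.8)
The plan is to mirror the proof of Theorem~\ref{thm:Bap}, replacing the trigonometric polynomials by spherical waves throughout. Fix $F\in\Brap^{\ts 2}(\RR^d)$ and, using Corollary~\ref{thm:characterisation_Brap}, choose spherical waves $S^{}_N(x)=\sum_{k=1}^{N}c^{}_k\ee^{2\pi\ii a^{}_k\|x\|}$ with $\|F-S^{}_N\|^{}_{b,2}\to 0$. Each $S^{}_N$ is bounded, hence lies in $L^\infty(\RR^d)\cap BL^2(\RR^d)$, so by the sesquilinearity of the reflected Eberlein convolution (Proposition~\ref{prop:linear}) together with Theorem~\ref{thm:sphere-d}, the autocorrelation $\eta^{}_{S^{}_N}=\lb S^{}_N,S^{}_N\rb$ exists and equals $\sum_{k}|c^{}_k|^2\ts\widehat{\theta^{}_{|a^{}_k|}}$, where I use that $\lb f^{}_a,f^{}_b\rb=0$ for $a\neq b$ and that $\eta^{}_{f^{}_a}=\widehat{\theta^{}_{|a|}}$ by Eq.~\eqref{eq:sphere-Bessel}. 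As a finite combination of Fourier transforms of probability measures, each $\eta^{}_{S^{}_N}$ is uniformly continuous.

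First I would show that $(\eta^{}_{S^{}_N})$ converges uniformly. By the Cauchy--Schwarz inequality of Proposition~\ref{prop:CS},
\[
\bigl\|\eta^{}_{S^{}_N}-\eta^{}_{S^{}_M}\bigr\|^{}_\infty \,\leqslant\, \bigl(\|S^{}_N\|^{}_{b,2}+\|S^{}_M\|^{}_{b,2}\bigr)\,\|S^{}_N-S^{}_M\|^{}_{b,2}\ts,
\]
and since $(S^{}_N)$ is Cauchy in $\|\cdot\|^{}_{b,2}$ with uniformly bounded norms, the right-hand side tends to $0$. Thus $(\eta^{}_{S^{}_N})$ is Cauchy in $(\Cu(\RR^d),\|\cdot\|^{}_\infty)$, which is complete, so there is a unique $h\in\Cu(\RR^d)$ with $\|\eta^{}_{S^{}_N}-h\|^{}_\infty\to 0$. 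A triangle-inequality argument identical to Theorem~\ref{thm:Bap}(b) then shows that $h$ is independent of the chosen approximating sequence.

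It remains to identify $h$ with the autocorrelation of $F$. Following Theorem~\ref{thm:Bap}(c), I would prove that $\frac{1}{\vol(B^{}_R)}\res{B^{}_R}{F}\ast\widetilde{\res{B^{}_R}{F}}$ converges to $h$ uniformly on compact sets, via a three-epsilon split into (i) a term bounded by $\|F-S^{}_N\|^{}_{b,2}$ through Cauchy--Schwarz, (ii) the difference $\frac{1}{\vol(B^{}_R)}\res{B^{}_R}{S^{}_N}\ast\widetilde{\res{B^{}_R}{S^{}_N}}-\eta^{}_{S^{}_N}$, and (iii) the term $\eta^{}_{S^{}_N}-h$, which is small by the convergence just established. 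Lemma~\ref{lem:uniform-convergence-compact} then yields that $\gamma^{}_\mu=h\lambda$ exists for $\mu=F\lambda$, so that $\eta^{}_F=h\in\Cu(\RR^d)$, as claimed.

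The hard part will be term (ii), namely the spherical analogue of Lemma~\ref{lem:P^{}_N-uniform}: for a fixed spherical wave, $\frac{1}{\vol(B^{}_R)}\res{B^{}_R}{S^{}_N}\ast\widetilde{\res{B^{}_R}{S^{}_N}}$ must converge to $\eta^{}_{S^{}_N}$ \emph{uniformly on compact sets}. Expanding bilinearly reduces this to the integrals $\frac{1}{\vol(B^{}_R)}\int_{B^{}_R}\ee^{2\pi\ii(a\|x\|-b\|x-u\|)}\,\dd x$; as in Lemma~\ref{lem:P^{}_N-uniform}(c), the van Hove boundary estimate reduces the doubly-restricted convolution to the singly-restricted one, so the genuine task is to upgrade the pointwise convergence underlying Theorem~\ref{thm:sphere-d} to uniformity in $u$ on compact sets. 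Since the limiting Bessel factor $J^{}_{\frac{d}{2}-1}(2\pi b\|u\|)$ and the $\cO(R^{d-1})$ bound of Eq.~\eqref{eq:R_n-1} are controlled uniformly for $\|u\|$ bounded, I expect this uniformity to hold; the delicate point is the uniform control of the error in the asymptotic replacement of $\|x-u\|$ by $r-s\cos\theta$ from Fact~\ref{lem:approx}, for which the analytic estimates of \cite{Kor2026} can be invoked.
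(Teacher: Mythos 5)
Your proposal is correct, and its core coincides with the paper's own proof: approximate $F$ by spherical waves $S_N$, use sesquilinearity (Proposition~\ref{prop:linear}) and Theorem~\ref{thm:sphere-d} to get $\eta_{S_N}=\sum_{k}|c_k|^2\,\widehat{\theta_{|a_k|}}$, derive the Cauchy estimate $\bigl\|\eta_{S_N}-\eta_{S_M}\bigr\|_\infty\leqslant\bigl(\|S_N\|_{b,2}+\|S_M\|_{b,2}\bigr)\,\|S_N-S_M\|_{b,2}$ from Proposition~\ref{prop:CS}, and invoke completeness of $(\Cu(\RR^d),\|\cdot\|_\infty)$. The difference lies in what comes after. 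The paper stops there: as announced in the paragraph preceding the proposition, the uniform limit of the $\lb S_N, S_N\rb$ is \emph{declared} to be the autocorrelation $\eta_F$, and no identification with the infinite-volume limit for $F$ itself is attempted. You, by contrast, push on to the analogue of Theorem~\ref{thm:Bap}(c), aiming to show that $\frac{1}{\vol(B_R)}\,\res{B_R}{F}\ast\widetilde{\res{B_R}{F}}$ converges to $h$ uniformly on compact sets, so that $\gamma_{F\lambda}=h\lambda$ by Lemma~\ref{lem:uniform-convergence-compact}. This buys a genuinely stronger conclusion --- $\eta_F$ is then the honest autocorrelation of the measure $F\lambda$ rather than a definition --- and you correctly isolate the one non-routine ingredient: a spherical analogue of Lemma~\ref{lem:P^{}_N-uniform}, i.e.\ uniformity in $u$ on compact sets of the convergence of $\frac{1}{\vol(B_R)}\int_{B_R}\ee^{2\pi\ii(a\|x\|-b\|x-u\|)}\,\dd x$, which requires uniform (not merely pointwise) control of the error in the replacement of Fact~\ref{lem:approx}; the van Hove reduction from the doubly- to the singly-restricted convolution does carry over verbatim, as you note. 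That ingredient is only sketched in your proposal (deferred to the estimates of \cite{Kor2026}), so the stronger claim is not fully proved here; but since the paper's own notion of $\eta_F$ requires only your first two paragraphs, your proposal establishes the proposition in the sense the paper intends, and in addition identifies exactly what the stronger, infinite-volume reading would require.
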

\begin{proof}
    Since $F\in \Brap^{2}(\RR^d)$, there exists a sequence of approximating spherical waves $S^{}_{N}$ in $\Cu(\RR^d)$ of the form $S^{}_{N}(x) = \sum_{k=1}^{N} c^{}_{k} \ee^{2\pi\ii a^{}_{k} \|x\|}$, such that $\|F-S^{}_{N}\|^{}_{b,2} \to 0$ as $N\to \infty$. Clearly, this sequence of approximants is Cauchy with respect to $\|\cdot\|_{b,2}$. 

    Now, by Theorem~\ref{thm:sphere-d}, 
    \[\lb S^{}_{N},S^{}_{N}\rb (x)\,  =\, \vG \! \left( \myfrac{d}{2}\right) \sum_{k=1}^N |c^{}_{k}|^2
    \frac{J^{}_{\frac{d}{2}-1}\bigl( 2 \pi \| a^{}_{k}x \|\bigr)}
    {\bigl( \pi \| a^{}_{k}x \| \bigr)^{\frac{d}{2} - 1}} \ts. \]
    Since for any $z \in \CC$ and for any $\nu \geqslant -\frac{1}{2}$, the Bessel function $J^{}_{\nu}$ satisfies \cite[Eq.~9.1.62]{AS1972}
    \[|J_{\nu}(z)| \, \leqslant \, \myfrac{\bigl|\tfrac{1}{2}z\bigr|^{\nu} \ts \ee^{|\mathrm{Im}(z)|}}{\vG(\nu+1)} \ts ,\]
    we obtain 
    \[\bigl| \lb S^{}_{N},S^{}_{N}\rb (x) \bigr| \, \leqslant \,  \vG \! \left( \myfrac{d}{2}\right) \sum_{k=1}^N |c^{}_{k}|^2
    \frac{\frac{1}{\vG\left(\frac{d}{2}\right)}\Bigl(\tfrac{2 \pi \| a^{}_{k}x \|}{2}\Bigr)^{\frac{d}{2}-1}}
    {\bigl( \pi \| a^{}_{k}x \| \bigr)^{\frac{d}{2} - 1}} \, = \, \sum_{k=1}^N |c^{}_{k}|^2 \, < \, \infty \ts , \]
    as follows from Theorem~\ref{thm:characterisation_Brap}. Therefore, $\lb S^{}_{N},S^{}_{N}\rb \in L^{\infty}(\RR^d) \cap BL^{2}(\RR^d)$ and we can apply the Cauchy--Schwartz inequality \eqref{eq:CS} and the usual trick (as in the proof of Theorem~\ref{thm:Bap}) to show 
    \begin{equation}\label{eq:S_N-conv-Cauchy}
    \bigl\|  \lb S^{}_{N}, S^{}_{N} \rb - \lb S^{}_{M}, S^{}_{M} \rb \bigr\|^{}_\infty \, \leqslant \,    2\|F \|^{}_{b,2} \ts \bigl\| S^{}_{N}-S^{}_{M} \bigr\|^{}_{b,2} \ts. 
    \end{equation}
    Moreover, since $S_N \in\Cu(\RR^d)$, we have that $\lb S_N, S_N \rb \in \Cu(\RR^d)$ by Proposition~\ref{prop:autocorrelation-reflected}, so \eqref{eq:S_N-conv-Cauchy} implies that the sequence $\lb S^{}_{N}, S^{}_{N} \rb$ is Cauchy in $\Cu(\RR^d)$. Therefore, by completeness of $\Cu(\RR^d)$, a uniformly continuous limiting function for this sequence exists, which we denote by $\eta^{}_{F}$; this function is the autocorrelation. 
\end{proof}

We can now prove the following radial analogy of Theorem~\ref{thm:Besicovitch-diffraction}. 

\begin{theorem}\label{thm:Brap-diffraction}
Let $F\in \Brap^{2}(\RR^d)$ and suppose that $F$ can be written as
\[
    F(x) \, = \, \sum_{m=1}^\infty c^{}_m \ee^{2\pi \ii a^{}_m \|x\|} \, ,
\]
where the convergence of the sum is with respect to $\|\cdot \|^{}_{b,2}$. Then, $\eta^{}_F$ exists and the diffraction reads
\[
    \widehat{\eta^{}_{F}} \, = \, \sum_{m=1}^\infty |c^{}_m|^2 \theta_{|a^{}_m|} \,.
\]
In particular, the diffraction is radially concentrated and rotationally invariant.
\end{theorem}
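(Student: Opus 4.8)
The plan is to mirror the strategy of Theorem~\ref{thm:Besicovitch-diffraction}, replacing trigonometric polynomials by the approximating spherical waves, but passing to the limit at the level of tempered distributions rather than point amplitudes, since here the diffraction is concentrated on spheres rather than on atoms. Write $S^{}_N(x) = \sum_{m=1}^N c^{}_m \ee^{2\pi\ii a^{}_m \|x\|}$ for the partial sums, so that $\|F - S^{}_N\|^{}_{b,2} \longrightarrow 0$. By Proposition~\ref{prop:spherical_auto_ex}, the autocorrelation $\eta^{}_F$ exists, lies in $\Cu(\RR^d)$, and is the uniform limit of $\eta^{}_{S^{}_N} = \lb S^{}_N, S^{}_N \rb$; moreover, the computation carried out in that proof (via the single-wave formula of Theorem~\ref{thm:sphere-d} and the vanishing $\lb f^{}_a, f^{}_b\rb = 0$ for $a\neq b$) gives $\eta^{}_{S^{}_N} = \sum_{m=1}^N |c^{}_m|^2 \ts \eta^{}_{f^{}_{a^{}_m}}$. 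Applying linearity of the Fourier transform together with $\widehat{\eta^{}_{f^{}_{a^{}_m}}} = \theta^{}_{|a^{}_m|}$ from Theorem~\ref{thm:sphere-d} then yields
\[
\widehat{\eta^{}_{S^{}_N}} \, = \, \sum_{m=1}^N |c^{}_m|^2 \ts \theta^{}_{|a^{}_m|} \ts .
\]

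Next I would pass to the limit $N\to\infty$ in the space $\mathcal{S}'(\RR^d)$ of tempered distributions. Since $\eta^{}_F$ and each $\eta^{}_{S^{}_N}$ are bounded, they define tempered distributions, and for any $\varphi \in \mathcal{S}(\RR^d)$ the estimate $|\langle \eta^{}_F - \eta^{}_{S^{}_N}, \varphi\rangle| \leqslant \|\eta^{}_F - \eta^{}_{S^{}_N}\|^{}_\infty \ts \|\varphi\|^{}_{L^1}$, combined with the uniform convergence from Proposition~\ref{prop:spherical_auto_ex}, shows that $\eta^{}_{S^{}_N} \longrightarrow \eta^{}_F$ in $\mathcal{S}'(\RR^d)$. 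As the Fourier transform is continuous on $\mathcal{S}'(\RR^d)$, this gives $\widehat{\eta^{}_{S^{}_N}} \longrightarrow \widehat{\eta^{}_F}$. On the other hand, since $\sum_{m=1}^\infty |c^{}_m|^2 < \infty$ by Corollary~\ref{thm:characterisation_Brap} and each $\theta^{}_{|a^{}_m|}$ is a probability measure, the tail bound $\bigl|\bigl\langle \sum_{m>N} |c^{}_m|^2 \theta^{}_{|a^{}_m|}, \varphi\bigr\rangle\bigr| \leqslant \|\varphi\|^{}_\infty \sum_{m>N} |c^{}_m|^2$ shows that $\sum_{m=1}^N |c^{}_m|^2 \theta^{}_{|a^{}_m|}$ converges, in total variation and hence in $\mathcal{S}'(\RR^d)$, to the finite positive measure $\sum_{m=1}^\infty |c^{}_m|^2 \theta^{}_{|a^{}_m|}$. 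Uniqueness of limits in $\mathcal{S}'(\RR^d)$ then forces
\[
\widehat{\eta^{}_F} \, = \, \sum_{m=1}^\infty |c^{}_m|^2 \ts \theta^{}_{|a^{}_m|} \ts .
\]

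For the final clause, I would observe that each $\theta^{}_{|a^{}_m|}$ is concentrated on the sphere $\Sph^{d-1}_{|a^{}_m|}$, so $\widehat{\eta^{}_F}$ is concentrated on the countable union $\bigcup_m \Sph^{d-1}_{|a^{}_m|}$ and is therefore radially concentrated; and since each $\theta^{}_{|a^{}_m|}$ is rotationally invariant, the convergent sum inherits rotational invariance.

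I expect the main obstacle to be the justification of interchanging the Fourier transform with the limit. Unlike the plane-wave setting, the diffraction is not pure point, so one cannot read off amplitudes at single points through the formula in Eq.~\eqref{eq:intensity-formula}; instead, the natural framework is convergence of measures (equivalently, of tempered distributions). The clean route is precisely to leverage the uniform convergence $\eta^{}_{S^{}_N} \to \eta^{}_F$ already supplied by Proposition~\ref{prop:spherical_auto_ex}, upgrade it to convergence in $\mathcal{S}'(\RR^d)$, and then apply continuity of the Fourier transform there, matching it against the elementary norm convergence of the measures $\sum_{m\leqslant N}|c^{}_m|^2 \theta^{}_{|a^{}_m|}$.
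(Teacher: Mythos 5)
Your proposal is correct and follows essentially the same route as the paper's own proof: existence and uniform convergence $\eta^{}_{S^{}_N} \to \eta^{}_F$ from Proposition~\ref{prop:spherical_auto_ex}, the estimate $|\langle \eta^{}_F - \eta^{}_{S^{}_N}, \varphi\rangle| \leqslant \|\eta^{}_F - \eta^{}_{S^{}_N}\|^{}_\infty \|\varphi\|^{}_1$ to upgrade uniform convergence to convergence in $\cS'(\RR^d)$, and continuity of the Fourier transform on $\cS'(\RR^d)$ to conclude $\widehat{\eta^{}_{S^{}_N}} \to \widehat{\eta^{}_F}$. The only difference is the final identification of $\lim_{N} \sum_{m=1}^N |c^{}_m|^2 \theta^{}_{|a^{}_m|}$ with the infinite sum, where the paper defers to the amplitude argument of Theorem~\ref{thm:Besicovitch-diffraction} adapted to the rotationally symmetric setting, while you give a more self-contained argument via the total-variation tail bound $\sum_{m>N} |c^{}_m|^2 \to 0$ and uniqueness of limits in $\cS'(\RR^d)$.
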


\begin{proof}
The autocorrelation exists and is translation bounded by Proposition \ref{prop:spherical_auto_ex}. For every approximant $S^{}_{N}$, the diffraction exists and reads $\widehat{\eta^{}_{S^{}_{N}}}  = \sum_{m=1}^N |c^{}_m|^2 \theta_{|a^{}_m|}$. 

We will now profit from the linearity of the Fourier transform as an isometry of $\cS'(\RR^d)$, the space of tempered distributions. 
To do so, observe that $\eta^{}_{S^{}_{N}}\in\cS'(\RR^d)$ for all $N$ as one can show easily. 
Moreover, we have that $\eta^{}_F = \lim_N\gamma^{}_{S^{}_N}$ is in $\Cu(\RR^d)$. As $\Cu(\RR^d) \subseteq \Lloc{1}(\RR^d) \hookrightarrow \cS'(\RR^d)$, we also obtain convergence in the sense of tempered distributions, or, explicitly, for any $\phi\in\cS(\RR^d)$, we have 
\[ \bigl|\langle \eta^{}_F -\eta^{}_{S^{}_{N}} , \phi\rangle\bigr| \, =  \, \left| \int^{}_{\RR^d} \bigl(\eta^{}_{F}(x)-\eta^{}_{S^{}_N}(x)\bigr)\ts\phi(x)\dd x\right| \, \leqslant  \, \bigl\|\eta^{}_F -\eta^{}_{S^{}_{N}} \bigr\|^{}_{\infty} \, \bigl\|\phi \bigr\|^{}_{1} \xrightarrow{N\to \infty} 0 \ts . \]

Therefore, the continuity of Fourier transform on $\cS'(\RR^d)$ implies $\widehat{\eta^{}_{S^{}_{n}}} \longrightarrow \widehat{\eta^{}_{F}}$, so 
\[ \widehat{\eta^{}_{F}} \, = \, \lim_{N\to \infty} \sum_{m=1}^N |c^{}_{m}|^2 \, \theta_{|a^{}_m|} \, =\, \sum_{m=1}^{\infty} |c^{}_{m}|^2 \, \theta_{|a^{}_m|} \ts .\]
The last equality can be established along the same line as in the proof of Theorem~\ref{thm:Besicovitch-diffraction}, while using the rotational symmetry of the function $F$ and its autocorrelation. 
\end{proof}

\begin{remark}
As mentioned earlier in Remark~\ref{rem:spherical_uniq_problem}, given a diffraction pattern consisting of concentric spheres with prescribed intensities, we cannot find (up to phase factors) a unique spherical wave with such a diffraction. However, if we impose the additional restriction that the wave be real, we gain uniqueness once we choose the phase factors. Indeed, as $\ee^{2\pi\ii r\|x\|} + \ee^{-2\pi\ii r\|x\|} = 2\cos(2\pi r \|x\|)$, the \emph{unique} (up to phase factors $(\xi^{}_{k})^{}_{k\in \NN}$) real spherical wave with diffraction 
\[
\sum^{\infty}_{k=1} C^{}_k \theta^{}_{a^{}_{k}} \ts , 
\]
reads
\[ 
\sum_{k=1}^{\infty} \xi^{}_{k} \sqrt{2C^{}_{k}} \ts \cos\bigl(2\pi a^{}_k\|x\|\bigr) \ts,
\]
where $\xi^{}_{k} \in \{\pm 1\}$, $C^{}_k >0$ and $a^{}_{k}>0$ for all $k\in \NN$. \exend
\end{remark}

So far, we have found functions whose diffraction is a point or a~sphere of radius $r$ centred at the origin. With the following simple trick, we can also obtain a function whose diffraction consists of a single circle of radius $r>0$ centred at a given point $a\in\RR^d$.

\begin{proposition}\label{prop:shift}
    Let $r \neq 0$ and $a \in \RR^d$. Then the diffraction of the measure $f^{}_{r,a}\lambda$ with density function $f^{}_{r,a}(x) = \ee^{2\pi \ii a\cdot x} \ee^{2\pi \ii r\| x\|}$ is the uniform probability distribution on the sphere of radius $r$ centred at $a$. Moreover, the following statements hold:
    \begin{enumerate}[label=(\alph*)]
    \item If $r'$ and $a'$ are such that $(r,a) \neq ( \pm r', a')$, then $\lb f_{r,a}, f_{f', a'}\rb = 0$;
    \item For any plane wave $g_b(x) = \ee^{2\pi \ii b \cdot x}$, we have $\lb f_{r,a}, g_b \rb = 0 $.
    \end{enumerate}
\end{proposition}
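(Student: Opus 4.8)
The plan is to reduce both the diffraction formula and the orthogonality statements (a)--(b) to the single-sphere computation of Theorem~\ref{thm:sphere-d}, using the elementary fact that multiplication by a plane wave shifts the spectrum, together with the orthogonality criterion of Theorem~\ref{thm:orthogonal}.

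First I would compute the autocorrelation of $f^{}_{r,a}$ directly. Abbreviating $f^{}_r(x) = \ee^{2\pi\ii r\|x\|}$, the integrand of the reflected Eberlein convolution factorises as
\[
    f^{}_{r,a}(y)\,\overline{f^{}_{r,a}(y-x)} \, = \, \ee^{2\pi\ii a\cdot x}\, f^{}_r(y)\,\overline{f^{}_r(y-x)} \ts ,
\]
because the two plane-wave phases $\ee^{2\pi\ii a\cdot y}$ and $\ee^{-2\pi\ii a\cdot(y-x)}$ combine into $\ee^{2\pi\ii a\cdot x}$, which is independent of the integration variable $y$ and therefore pulls out of the limit defining $\lb f^{}_{r,a}, f^{}_{r,a}\rb$. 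Hence $\eta^{}_{f^{}_{r,a}}$ exists and equals $\ee^{2\pi\ii a\cdot x}\,\eta^{}_{f^{}_r}(x)$, where $\eta^{}_{f^{}_r}$ is the Bessel-type autocorrelation supplied by Theorem~\ref{thm:sphere-d}. Applying the modulation property of the Fourier transform, $\widehat{\ee^{2\pi\ii a\cdot x}h}(k) = \widehat{h}(k-a)$, to the tempered distribution $\eta^{}_{f^{}_r}$ then turns $\widehat{\eta^{}_{f^{}_r}} = \theta^{}_{|r|}$ into its translate by $a$, giving the uniform probability measure on the sphere of radius $|r|$ centred at $a$. This is the main assertion.

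For (a) and (b) I would invoke Theorem~\ref{thm:orthogonal}. Each density $f^{}_{r,a}$, $f^{}_{r',a'}$, and $g^{}_b$ has modulus one, so lies in $L^\infty(\RR^d)$ and defines a translation-bounded measure (Remark~\ref{rem:non-TB-challenges}); their autocorrelations exist by the factorisation above (and by the plane-wave computation of Section~\ref{sec:plane} for $g^{}_b$, whose diffraction is $\delta^{}_b$). It then suffices to check that the two diffractions are mutually singular. In (a) the diffractions are the surface measures on the spheres $\{\,k : \|k-a\| = |r|\,\}$ and $\{\,k : \|k-a'\| = |r'|\,\}$, and these coincide exactly when $a = a'$ and $|r| = |r'|$, that is, precisely when $(r,a) = (\pm r', a')$; so under the hypothesis they are distinct. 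In (b) the diffraction of $g^{}_b$ is the atom $\delta^{}_b$, which is singular to the non-atomic sphere measure. In both cases Theorem~\ref{thm:orthogonal} yields $\lb f^{}_{r,a}\lambda, f^{}_{r',a'}\lambda\rb = 0$, respectively $\lb f^{}_{r,a}\lambda, g^{}_b\lambda\rb = 0$; since all densities are bounded, this agrees with the function-level convolution by Proposition~\ref{prop:bounded-conv}.

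The hard part will be the mutual-singularity step in (a): showing that two distinct spheres carry mutually singular surface measures. When the centres agree but the radii differ the spheres are disjoint. When the centres differ, subtracting the defining relations $\|k-a\|^2 = r^2$ and $\|k-a'\|^2 = r'^2$ shows that any intersection point lies on the affine hyperplane $2\,k\cdot(a'-a) = r^2 - r'^2 - \|a\|^2 + \|a'\|^2$, so the intersection meets each sphere in a set of dimension at most $d-2$, which is null for the corresponding $(d-1)$-dimensional surface measure. This produces disjoint carriers and hence singularity for $d\geqslant 2$; the one-dimensional case, where the spheres degenerate to point pairs that may overlap in a single atom, is exactly the purely one-dimensional phenomenon already noted after Proposition~\ref{lem:sphere-1}.
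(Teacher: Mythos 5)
Your proposal is correct and follows essentially the same route as the paper's proof: factor the plane-wave phases out of the correlation integral to get $\eta^{}_{f_{r,a}} = \ee^{2\pi \ii a\cdot x}\ts\eta^{}_{f_r}$, apply the modulation property of the Fourier transform, and then settle (a) and (b) via Theorem~\ref{thm:orthogonal} and mutual singularity of the diffraction measures. If anything, your singularity step is more careful than the paper's, which asserts the spheres have ``at most two points of intersection'' (literally true only for $d \leqslant 2$, whereas your hyperplane argument covers all $d \geqslant 2$), and your remark that the argument degenerates for $d=1$ flags a genuine caveat: there the ``spheres'' are point pairs that can share an atom, and (a) and (b) can actually fail, a restriction the paper's proof passes over silently.
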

\begin{proof}
    With the notation used in Theorem \ref{thm:sphere-d}, a direct computation yields
    \[ \lb f^{}_{r,a}\ts , \ts f^{}_{r,a}\rb\, (x)  \, = \, \ee^{2\pi \ii a\cdot x}\ts  \eta^{}_{f^{}_{r}}(x)\ts .\]
    Taking the Fourier transform and considering its behaviour when multiplying by a~character gives the first claim.  

    Next, to prove (a) and (b), it suffices to verify that the diffraction measures are mutually singular by Theorem~\ref{thm:orthogonal}.  In the case of (a), observe that when $(r,a) \neq (r',a')$, the diffraction measures of $f_{r,a}\lambda$ and $f_{r',a'}\lambda$ are spheres with at most two points of intersection.  For (b), observe that the diffraction measure of $f_{r,a}$ and the diffraction measure of $g_b$ consist of a sphere and a point, respectively.  In both situations, the diffraction measures are indeed mutually singular.
\end{proof}

\begin{example}
    Using linearity and Proposition~\ref{prop:shift}, one can construct functions with a prescribed diffraction pattern consisting of spheres and points. For example, in two dimensions, one can consider 
    \[
    f^{}_{\mathrm{surprised}}(x) \, = \, \ee^{2\pi\ii \|x\|} + \ee^{6\pi\ii \|x\|} + (1+\ee^{4\sqrt{2}\pi\ii x^{}_{1}})\ee^{-2\sqrt{2}\pi\ii (x^{}_{1} - x^{}_{2})}\, ,   
    \]
    which we call a function with surprised diffraction: \vspace{2ex}

    \begin{center}
    \includegraphics{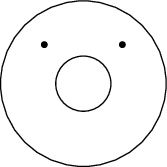}
    \end{center}

    One can also consider 
    \[
    f^{}_{\mathrm{Olympic}}(x) \, = \, \bigl(1 + 2\cos(4\pi x^{}_{1}) +  (1+\ee^{4\pi\ii x^{}_{1}})\ee^{-2\pi\ii (x^{}_{1} + x^{}_{2})}\bigr)\ee^{4\pi\ii\|x\|},
    \]
    whose diffraction consists of the Olympic rings: \vspace{2ex}

    \begin{center}
    \includegraphics{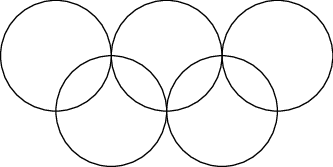}
    \end{center}
    \exend
\end{example}

\section*{Acknowledgment}
We thank Michael Baake and Nicolae Strungaru for their helpful discussions and comments on a draft of this paper. 
JM was supported by the German Research Council (Deutsche Forschungsgemeinschaft, DFG) under CRC 1283/2 (2021--317210226).

\printbibliography

@article{LS2024,
      title={Diffraction as a unitary representation and the orthogonality of measures with respect to the reflected {E}berlein convolution},
      author={Daniel Lenz and Nicolae Strungaru},
      year={2024},
      eprint={2402.01044},
      archivePrefix={\texttt{arXiv}},
      url={https://arxiv.org/abs/2402.01044},
      journal={},
      addendum = {\textit{preprint}}
}

@article{LSS2024b,
    title = {The (reflected) {E}berlein convolution of measures},
    journal = {Indag.~Math.},
    volume = {35},
    number = {5},
    pages = {959--988},
    year = {2024},
    issn = {0019-3577},
    doi = {https://doi.org/10.1016/j.indag.2023.10.005},
    author = {Daniel Lenz and Timo Spindeler and Nicolae Strungaru},
    eprint={2211.06969},
    archivePrefix={\texttt{arXiv}},
    url={https://arxiv.org/abs/2211.06969}
}

@book{BG2013,
  title = {{Aperiodic Order.  Vol. 1:  A Mathematical Invitation}},
  author = {Baake, Michael and Grimm, Uwe},
  year = {2013},
  publisher = {{Cambridge University Press}},
  address = {{Cambridge}}
}

@book{BG2017,
  title = {{Aperiodic Order.  Vol. 2:  Crystallography and Almost Periodicity}},
  author = {Baake, Michael and Grimm, Uwe},
  year = {2017},
  publisher = {{Cambridge University Press}},
  address = {{Cambridge}}
}

@inbook{MS2017,
  title = {Almost Periodic Measures and Their {F}ourier Transforms},
  author = {Moody, Robert V. and Strungaru, Nicolae},
  chapter = {4},
  crossref="BG2017"
}

@article{LSS2020,
  title={Pure point diffraction and mean, {B}esicovitch and {W}eyl almost periodicity},
  author={Daniel Lenz and Timo Spindeler and Nicolae Strungaru},
  year={2020},
  eprint={2006.10821},
  archivePrefix={\texttt{arXiv}},
  url={https://doi.org/10.48550/arXiv.2006.10821},
      journal={},
      addendum = {\textit{preprint}}
}

@book{BM2000,
editor = {Michael Baake and Robert V. Moody},
title = {Directions in Mathematical Quasicrystals},
year = {2000},
series = {CRM Monograph Series},
volume = {13},
publisher = {Amer.~Math.~Society},
address = {Providence, RI}
}

@inbook{Sch2000,
author = {Martin Schlottmann},
title = {Generalised model sets and dynamical systems},
pages = {143--159},
crossref = "BM2000"
}

@article{BFG2007,
  title = {A Radial Analogue of {{Poisson}}'s Summation Formula with Applications to Powder Diffraction and Pinwheel Patterns},
  author = {Baake, Michael and Frettl{\"o}h, Dirk and Grimm, Uwe},
  year = {2007},
  journal = {J.~Geom.~Phys.},
  volume = {57},
  number = {5},
  pages = {1331--1343},
  eprint={0610408},
  archivePrefix={\texttt{arXiv}},
  url={https://arxiv.org/abs/math/0610408}
}

@book{AS1972,
  editor = {Milton Abramowitz and Irene A. Stegun},
  title = {Handbook of Mathematical Functions With Formulas, Graphs, and Mathematical Tables},
  publisher = {U.S. Department of Commerce},
  year = {1972},
  address = {Dover, New York}
}

@article{RS2017,
    title = {Pure Point Diffraction and {P}oisson Summation},
    journal = {Ann.~Henri~Poincar\'e},
    volume = {18},
    pages = {3903--3931},
    year = {2017},
    author = {Christoph Richard and Nicolae Strungaru},
    eprint={1512.00912},
    archivePrefix={\texttt{arXiv}},
    url={https://arxiv.org/abs/1512.00912}
}

@article{LLRSS2020,
    title = {Modulated crystals and almost periodic measures},
    journal = {Lett.~Math.~Phys.},
    volume = {110},
    pages = {3435--3472},
    year = {2020},
    author = {Jeong-Yup Lee and Daniel Lenz and Christoph Richard and Bernd Sing and Nicolae Strungaru},
    eprint={1907.07017},
    archivePrefix={\texttt{arXiv}},
    url={https://arxiv.org/abs/1907.07017}
}

@article{AKM2025,
    title = {Measures, Modular Forms, and Summation Formulas of {P}oisson Type},
    journal = {Comm.~Math.~Phys.},
    volume = {406},
    pages = {137:1--23},
    year = {2025},
    author = {Claudia Alfes and Paul Kiefer and Jan Maz\'a\v{c}},
    eprint={2405.15620},
    archivePrefix={\texttt{arXiv}},
    url={https://arxiv.org/abs/2405.15620}
}

@misc{LS2025,
    author = {Daniel Lenz and Nicolae Strungaru},
    year = {2025},
    title = {A characterization of mean almost periodic functions via almost character},
    addendum = {Work in progress}
}

@article{Fre2008,
    title = {Substitution Tilings with Statistical Circular Symmetry},
    author = {Frettl{\"o}h, Dirk},
    year = {2008},
    journal = {Eur.~J.~Comb.},
    volume = {29},
    number = {8},
    pages = {1881--1893},
    eprint = {0704.2521},
    archivePrefix = {\texttt{arXiv}},
    url = {https://arxiv.org/abs/0704.2521}
}

@article{Rad1994,
  title = {The Pinwheel Tilings of the Plane},
  author = {Radin, Charles},
  year = {1994},
  journal = {Ann.~Math.},
  volume = {139},
  number = {3},
  pages = {661--702}
}

@phdthesis{Kor2026,
  author = {Korfanty, Emily Rose},
  school = {University of Alberta, Canada},
  addendum = {\textit{in progress}}
}

@article{Sad1998,
  title = {Some generalizations of the pinwheel tiling},
  author = {Sadun, Lorenzo},
  year = {1998},
  journal = {Disc.~Comput.~Geom.},
  volume = {20},
  pages = {79--110},
  eprint = {9712263},
  archivePrefix = {\texttt{arXiv}},
  url = {https://arxiv.org/abs/math/9712263}
}

@article{BHPIII,
title = {Aperiodic order and spherical diffraction, {III}: {T}he shadow transform and the diffraction formula},
journal = {J.~Funct.~Anal.},
volume = {281},
number = {12},
pages = {109265},
year = {2021},
issn = {0022-1236},
doi = {https://doi.org/10.1016/j.jfa.2021.109265},
url = {https://www.sciencedirect.com/science/article/pii/S0022123621003475},
author = {Michael Björklund and Tobias Hartnick and Felix Pogorzelski},
archivePrefix = {\texttt{arXiv}},
eprint = {2002.05618},
url = {https://arxiv.org/abs/2002.05618}
}

@article{BHPII,
title = {Aperiodic order and spherical diffraction, {II}: {T}ranslation bounded measures on homogeneous spaces},
journal = {Math.~Z.},
volume = {300},
pages = {1157--1201},
year = {2022},
author = {Michael Björklund and Tobias Hartnick and Felix Pogorzelski},
archivePrefix = {\texttt{arXiv}},
eprint = {1704.00302},
url = {https://arxiv.org/abs/1704.00302}
}

@article{BHPI,
title = {Aperiodic order and spherical diffraction, {I}: {A}uto-correlation of regular model sets},
journal = {Proc. Lon. Math. Soc.},
volume = {116},
number={4},
pages = {957--996},
year = {2018},
author = {Michael Björklund and Tobias Hartnick and Felix Pogorzelski},
archivePrefix = {\texttt{arXiv}},
eprint = {1602.08928},
url = {https://arxiv.org/abs/1602.08928}
}

@article{Hof1995,
  title = {On Diffraction by Aperiodic Structures},
  author = {Hof, Albertus},
  year = {1995},
  journal = {Comm. Math. Phys.},
  volume = {169},
  number = {1},
  pages = {25--43},
  publisher = {{Springer-Verlag}},
  address = {{Berlin}}
}

@book{Bohr,
  title = {Almost periodic Functions, English edition},
  author = {Bohr, Harald},
  year = {1947},
  publisher = {{Chelsea}},
  address = {{New York}}
}

@book{Benedetto,
  title = {Spectral Synthesis},
  author = {Benedetto, John J.},
  year = {1975},
  publisher = {{B.G. Teubner}},
  address = {{Stuttgart}}
}

@article{PRS22,
  title = {Leptin densities in amenable groups},
  author = {Felix Pogorzelski and Christoph Richard and Nicolae Strungaru},
journal = {J. Funct. Anal.},
volume = {28},
number = {85},
pages = {1--36},
year = {2022},
archivePrefix = {\texttt{\texttt{arXiv}}},
eprint = {2107.04760},
url = {https://arxiv.org/abs/2107.04760}
}

@article{Bes1926,
  title = {On generalized almost periodic functions},
  author = {Besicovitch, A.S.},
  year = {1926},
  journal = {Proc. Lon. Math. Soc},
  volume = {2},
  number = {25},
  pages = {495--512}
}

@article{SS2019,
author = {Spindeler, Timo and Strungaru, Nicolae},
title = {A note on measures vanishing at infinity},
journal = {Rev. Math. Phys.},
volume = {31},
number = {02},
pages = {1950007},
year = {2019},
eprint = {1610.03381},
archivePrefix = {\texttt{arXiv}},
url = {https://arxiv.org/abs/1610.03381}
}

@article{BKM25,
    author = {Michael Baake and Emily~R. Korfanty and Jan Maz\'{a}\v{c}},
    title ={A note on measures whose diffraction is concentrated on a single sphere},
    year = {2025},
    eprint={2510.19038},
    archivePrefix={\texttt{arXiv}},
    url={https://doi.org/10.48550/arXiv.2510.19038},
      journal={},
      addendum = {\textit{preprint}}
}

@article{LSS2024,
  title={Pure point spectrum for dynamical systems and mean, {B}esicovitch and {W}eyl almost periodicity},
  volume={44},
  number={2},
  journal={Ergod. Th. Dynam. Syst.},
  author={Lenz, Daniel and Spindeler, Timo and Strungaru, Nicolae},
  year={2024},
  pages={524--568},
  eprint={2006.10825},
  archivePrefix={\texttt{arXiv}},
  url={https://arxiv.org/abs/2006.10825}
}

@article{MPS2006,
  title = {Circular symmetry of pinwheel diffraction},
  author = {Moody, Robert V. and Postnikoff, Derek and Strungaru, Nicolae},
  year = {2006},
  journal = {Ann. Henri Poincar\'e},
  volume = {7},
  pages = {711--730}
}

@article{BFG2007a,
  title = {Pinwheel Patterns and Powder Diffraction},
  author = {Baake, Michael and Frettl{\"o}h, Dirk and Grimm, Uwe},
  year = {2007},
  journal = {Philos. Mag.},
  volume = {87},
  number = {18-21},
  pages = {2831--2838},
  archivePrefix={\texttt{arXiv}},
  eprint={0610012},
  url={https://arxiv.org/abs/math-ph/0610012}
}

@article{GD2011,
  title = {Some Comments on Pinwheel Tilings and Their Diffraction},
  author = {Grimm, Uwe and Deng, Xinghua},
  year = {2011},
  journal = {J. Phys. Conf. Ser},
  volume = {284},
  number = {1},
  pages = {012032},
  archivePrefix={\texttt{arXiv}},
  eprint={1102.1750},
  url={https://arxiv.org/abs/1102.1750}
}

@article{BL2004,
  title={Dynamical systems on translation bounded measures: {P}ure point dynamical and diffraction spectra},
  volume={24},
  number={6},
  journal={Ergod. Th. Dynam. Syst.},
  author={Baake, Michael and Lenz, Daniel},
  year={2004},
  pages={1867–01893},
  eprint={0302061},
  archivePrefix={\texttt{arXiv}},
  url={https://arxiv.org/abs/math/0302061}
}

@article{AG1974,
  author = {Argabright, L. N. and Gil de Lamadrid, J.},
  title = {{F}ourier analysis of unbounded measures on locally compact abelian groups},
  year = {1974},
  journal = {Mem. Am. Math. Soc.},
  volume = {145},
}

@article{Ami1997,
title={{F}ourier spectrum of radially periodic images},
journal={J. Opt. Soc. Amer. A},
volume={14},
number={4},
year={1997},
pages={816--826},
author={I. Amidror}
}

@article{BSV2020,
author = {Baake, Michael and Strungaru, Nicolae and Terauds, Venta},
title = {Pure point measures with sparse support and sparse {F}ourier--{B}ohr support},
journal = {Trans. Lon. Math. Soc.},
volume = {7},
number = {1},
pages = {1-32},
eprint = {1908.00579},
archivePrefix={\texttt{arXiv}},
url={https://arxiv.org/abs/1908.00579},
year = {2020}
}

@article{SV2016,
author = {Strungaru, Nicolae and Terauds, Venta},
title = {Diffraction Theory and Almost Periodic Distributions},
journal = {J. Stat. Phys.},
volume = {164},
pages = {1183--1216},
eprint = {1603.04796},
archivePrefix={\texttt{arXiv}},
url={https://arxiv.org/abs/1603.04796},
year = {2016}
}

@book{BF1975,
  author = {Berg, C. and Forst, G.},
  title = {Potential Theory on Locally Compact Abelian Groups},
  publisher = {Springer Berlin},
  address = {New York},
  year = {1975}
}

@article{Str2020, 
title={On the Fourier Transformability of Strongly Almost Periodic Measures}, 
volume={72}, 
DOI={10.4153/S0008414X19000075}, 
number={4}, 
journal={Can. J. Math.}, 
author={Strungaru, Nicolae}, 
year={2020}, 
pages={900--927},
eprint = {1704.04778},
archivePrefix={\texttt{arXiv}},
url={https://arxiv.org/abs/1704.04778},
}

\end{document}